\newtheorem{theorem}{Theorem}
\newtheorem{lemma}[theorem]{Lemma}
\newtheorem{corollary}[theorem]{Corollary}
\newtheorem{proposition}[theorem]{Proposition}
\newtheorem{problem}{Problem}
\newtheorem{conjecture}[theorem]{Conjecture}
\newcommand\RR{\ensuremath{\mathbb{R}}}
\newcommand\NN{\ensuremath{\mathbb{N}}}
\newcommand\ZZ{\ensuremath{\mathbb{Z}}}
\newcommand\diam{\mathop{diam}}
\newcommand\proj{\mathop{proj}}
\newcommand\Val{\mathop{{\hbox{\sc Val}}}}
\newcommand\ValC{\underline{\Val}_{\,C}}
\newcommand\ValR{\overline{\Val}_{R}}
\newcommand\T{\mathcal T}
\newcommand\VT{\T_0}
\newcommand\ET{\T_1}
\newcommand\ext{\mathop{ext}}
\renewcommand\int{\mathop{int}}
\newcommand\sys{\mathop{sys}}
\begin{document}

\title{The game of Cops and Robber on geodesic spaces}

\author{Bojan Mohar\thanks{Supported in part by the NSERC Discovery Grant R611450 (Canada),
and by the Research Project J1-2452 of ARRS (Slovenia).}%
~\thanks{On leave from IMFM, Department of Mathematics, University of Ljubljana.}\\
\small Department of Mathematics\\[-0.8ex]
\small Simon Fraser University\\[-0.8ex]
\small Burnaby, BC \ V5A 1S6, Canada\\
\small\tt mohar@sfu.ca}

\date{}

\maketitle

\begin{abstract}
The game of Cops and Robber is traditionally played on a finite graph. The purpose of this paper is to introduce and analyse the game that is played on an arbitrary geodesic space (a compact, path-connected space endowed with intrinsic metric). It is shown that the game played on metric graphs is essentially the same as the discrete game played on abstract graphs and that for every compact geodesic surface there is an integer $c$ such that $c$ cops can win the game against one robber, and $c$ only depends on the genus $g$ of the surface. It is shown that $c=3$ for orientable surfaces of genus $0$ or $1$ and nonorientable surfaces of crosscap number $1$ or $2$ (with any number of boundary components) and that $c=O(g)$ and that $c=\Omega(\sqrt{g})$ when the genus $g$ is larger.
The main motivation for discussing this game is to view the cop number (the minimum number of cops needed to catch the robber) as a new geometric invariant describing how complex is the geodesic space.
\end{abstract}


\section{Introduction}

In this article we discuss the game of cops and robbers on geodesic spaces, with particular attention to metric graphs and metric 2-dimensional cell complexes endowed with intrinsic metric. Our version of the game has been introduced in \cite{Mo21}, where it was shown that there is a min-max theorem for the game in which the cops try to minimize (and the robber want to maximize) the infimum of the distances between the robber and the cops during the gameplay.  This version of the game is somewhat different from the game version discussed by Bollob\'as, Leader and Walters \cite{BoLeWa12}, but it preserves all the beauty and power of discrete Cops and Robber game played on graphs, it generalizes the classic game of Man and Lion (and other games of that type), and is still completely intuitive and natural to deal with.

The game of Cops and Robber on graphs shows remarkable connections with structural graph theory. Our main motivation to analyse the game on geodesic spaces is to introduce the cop number of geodesic spaces. This is a new geometric invariant and our preliminary results show that it may be of certain interest when discussing geometry of groups and manifolds.

\subsection{Pursuit-evasion games}

Pursuit-evasion games have a long history, especially in the setup of \emph{differential games} \cite{Is65,Ji15,Le94,Pa70,Pe93}. Differential games with more pursuers were introduced in the 1970s, see, e.g. \cite{HaBr74,Ch76,Ps76,LePa85} or a more recent paper \cite{FeIbAlSa20} and references therein. A more recent important application is design of robot movement in complicated environment, see e.g.~\cite{AHRW17}.

A general class of pursuit-evasion games has been devised in discrete setting, where the game is played on a finite graph. Nowakowski and Winkler \cite{NoWi83} and Quilliot \cite{Qui78} independently introduced the game of Cop and Robber with a robber being chased by a single cop. Aigner and Fromme \cite{AiFr84} extended the game to include more than one cop. For each graph $G$ and a positive integer $k$, the \emph{Cops and Robber game} on $G$, involves two players. The first player controls $k$ \emph{cops} placed at the vertices of the graph, and the second player controls the \emph{robber}, who is also positioned at some vertex. While the players alternately move to adjacent vertices (or stay at their current position), the cops want to catch the robber and the robber wants to prevent this ever to happen. The main question is how many cops are needed on the given graph $G$ in order that they can guarantee the capture. The minimum such number of cops is termed as the \emph{cop number} $c(G)$ of the graph.

The game of cops and robbers gained interest because of its ties with structural graph theory. Classes of graphs that can be embedded in a surface of bounded genus \cite{AiFr84} and those that exclude some fixed graph as a minor \cite{An84,An86} have bounded cop number. In particular, Aigner and Fromme \cite{AiFr84} proved that all graphs that can be embedded in the plane have cop number at most 3. We refer to the monograph by Bonato and Nowakowski \cite{BoNo11} for further details about the history of the game and for overview of the main results. Additionally, we refer to survey papers \cite{BoMo17,BaBo12} which give more details about relations of the game to topological graph theory and cover details about Meyniel's Conjecture, which is considered the most outstanding open problem in this area.


The famous Lion and Man problem that was proposed by Richard Rado in the late 1930s and discussed in Littlewood's Miscellany \cite{Li53,Li86} is a version of the game with one pursuer (the Lion) and one evader (the Man). The man and the lion are within a circular arena (unit disk in the plane), they run with equal maximum speed. It seems that in order to avoid the lion, the man would choose to run on the boundary of the disk. A simple argument then shows that the lion could always catch the man by staying on the segment joining the center of the disk with the point of the man and slowly approaching him. However, Besicovitch proved in 1952 (see \cite[pp.~114--117]{Li86}) that the man has a simple strategy, in which he will approach but never reach the boundary. That strategy enables him to avoid capture forever, no matter what the lion does.
We refer to \cite{BoLeWa12} for more details.

One can prove that two lions are enough to catch the man in a disk. A recent work by Abrahamsen et al.\ \cite{AHRW17,AHRW20} discusses the game with many lions versus one man in an arbitrary compact subset of the plane whose boundary consists of finitely many rectifiable simple closed curves and prove that three lions can always get their prey. There are many extensions of the Man and Lion game \cite{BoLeWa12}; the extension to ``birds catching a fly" in the unit ball in ${\mathbb R}^n$ \cite{Croft64} is just one such example.

There is also a more recent study of differential games with many pursuers in convex compact sets in Euclidean spaces by Ferrara et al. \cite{FeIbAlSa20} and a study of the game on 1-skeletons of regular polytopes \cite{AzKuHo17,AzKuKh16,AzKuKh19} by Azamov, Kuchkarov, and Kholboev.

Klein and Suri \cite{KlSu15} discussed pursuit-evasion on polyhedral surfaces. In their setting, players make alternate discrete steps, all of same length $t$, and with $t\to 0$. This way, they approximate the continuous setting of the game. They proved that $4g+4$ pursuers can always catch evader on a polyhedral surface of genus $g$, thus approximately matching a result of Schr\"oder \cite{Schr01} that $\tfrac{3}{2}g+3$ cops suffice in the cops and robber game on graphs of genus $g$.

The game of cops and robbers can be defined on any metric space.
It is far from obvious how such a game can be defined in order to be natural, resembling interesting examples and allowing for powerful mathematical tools. Subtleties of the various versions of the game are nicely outlined in an influential article by Bollob\'as, Leader, and Walters \cite{BoLeWa12}, who were the first to provide a general framework for such games. In this paper we take a slightly different approach -- following \cite{Mo21} -- which yields a common generalization of discrete type for all of the above-mentioned versions.

\subsection{Overview}

The purpose of this paper is to analyse the game of cops and robber that is played on an arbitrary geodesic space (a compact, path-connected space endowed with intrinsic metric). It is shown (see Theorem \ref{thm:metric graph}) that the game played on metric graphs is essentially the same as the discrete game played on abstract graphs and that for every surface there is an integer $c$ such that $c$ cops can win the game against one robber, and $c$ only depends on the genus $g$ of the surface. It is shown that three cops (i.e. $c=3$) are sufficient to win the game on any compact geodesic surface $S$
if its genus is $0$ or $1$ (if $S$ is orientable) or its crosscap number is $1$ or $2$ (when $S$ is nonorientable) and with any number of boundary components. Genus 0 case is covered by our Theorem \ref{thm:genus 0} and genus 1 case is Theorem \ref{thm:genus 1}. The nonorientable surfaces are dealt easily through their orientable double cover. In general, when the Euler genus $g$ is larger, we may need  up to $\Omega(\sqrt{g})$ cops. On the other hand, we prove that $O(g)$ cops suffice.

\section{Intrinsic metric and geodesic spaces}

We consider a metric space $(X,d)$ and the corresponding metric space topology on $X$. For $x,y\in X$, an \emph{$(x,y)$-path} is a continuous map $\gamma: I\to X$ where $I=[0,1]$ is the unit interval on $\RR$ and $\gamma(0)=x$ and $\gamma(1)=y$.
We allow the paths to be parametrized differently and in particular we can replace $I$ with any finite interval on $\RR$.
The space is \emph{path-connected} if for any $x,y\in X$, there exists an $(x,y)$-path connecting them.

One can define the \emph{length} $\ell(\gamma)$ of the path $\gamma$ by taking the supremum over all finite sequences $0=t_0<t_1<t_2< \cdots < t_n=1$ of the values $\sum_{i=1}^n d(\gamma(t_{i-1}),\gamma(t_i))$. Note that $\ell(\gamma)$ may be infinite; if it is finite, we say that $\gamma$ is \emph{rectifiable}. Clearly, the length of any $(x,y)$-path is at least $d(x,y)$.  The metric space $X$ is a \emph{geodesic space} if for every $x,y\in X$ there is an $(x,y)$-path whose length is equal to $d(x,y)$.

An $(x,y)$-path $\gamma$ is \emph{isometric} if $\ell(\gamma) = d(x,y)$. Observe that for $0\le t < t' \le 1$ the subpath $\gamma|_{[t,t']}$ is also isometric. Therefore the set $\gamma(I) = \{\gamma(t)\mid t\in I\}$ is an isometric subset of $X$. With a slight abuse of terminology, we say that the image $\gamma(I)\subset X$ is an \emph{isometric path} in $X$.

A path $\gamma$ is a \emph{geodesic} if it is locally isometric, i.e., for every $t\in [0,1]$ there is an $\varepsilon>0$ such that the subpath $\gamma|_J$ on the interval $J = [t-\varepsilon,t+\varepsilon]\cap[0,1]$ is isometric. A path with $\gamma(0)=\gamma(1)$ is called a \emph{loop} (or a \emph{closed path}). When we say that a loop is a geodesic, we mean it is geodesic as a path and it is also locally isometric around its base point, i.e. $\gamma|_{[1-\varepsilon,1]\cup[0,\varepsilon]}$ is isometric for some $\varepsilon>0$.
We will mainly deal with \emph{$(x,y)$-geodesics}, which we define as shortest geodesics from $x$ to $y$, and thus we implicitly assume that any $(x,y)$-geodesic is always isometric.

One can consider any path-connected compact metric space $X$ and then define the shortest-path distance. For $x,y\in X$, the \emph{shortest-path distance} from $x$ to $y$ is defined as the infimum of the lengths of all $(x,y)$-paths in $X$. If any two points in $X$ are joined by a path of finite length, then the shortest path distance gives the same topology on $X$. Compactness implies that any sequence of $(x,y)$-paths of bounded length contains a point-wise convergent subsequence, and that the limit points determine an $(x,y)$-path. This implies that there is a path whose length is equal to the infimum of all path lengths. Hence, for this metric, which is also known as the \emph{intrinsic metric}, $X$ is a geodesic space.

If $X$ is a geodesic space, each of its points appears on a geodesic. But some points only appear as the end-points of isometric paths in $X$ and cannot appear as interior points of those. Such points will be referred to as \emph{corners}. All other points appear as internal points on geodesics in $X$ and are said to be \emph{regular}. It is obvious that regular points are dense in $X$. On the other hand, the set of corners can also be very rich. It may contain the whole boundary component, but in the interior of $X$, it is totally path-disconnected in the sense that every path containing only corners is either trivial (a single point), or is contained in $\partial X$. Still, the set of corners in the interior of $X$ can be uncountable (for example, it can contain the Cantor set).

We refer to \cite{BuBuIv01} and \cite{BuSh04} for further details on metric geometry, and on geodesic spaces in particular.
From now on we will assume that $X$ is a compact path-connected space, endowed with intrinsic metric; in other words, $X$ is a compact geodesic space.

\subsection{Metric cell complexes}

If a metric space $X$ is homeomorphic to a 1-dimensional cell complex (graph), then we say that $X$ is a \emph{metric graph}. 
If $G$ is an abstract graph and $w:E(G)\to \RR_+$ is a function specifying the length of each edge, we define the \emph{metric graph} $X(G,w)$ corresponding to $G$ and $w$ as the metric graph $G$ in which each edge $e$ is represented by a real interval of length $w(e)$. We write $X(G)$ if all edge-weights are equal to 1.

Metric graphs are just finite 1-dimensional complexes, endowed with intrinsic metric.
We can consider intrinsic metric in any finite cell complex. Here we usually assume that each cell $C$ is endowed with intrinsic metric inherited from some Euclidean space in which $C$ is embedded. Such a geodesic space is said to be a \emph{piecewise-linear geodesic space}. More generally, each cell may be a more complicated geodesic space, and then we consider the induced intrinsic metric of the cell complex.

In the special case of simplicial complexes we assume that the simplices are linearly embedded (as the convex hull of their vertices) in some ${\mathbb R}^n$, unless stated otherwise.

\subsection{Polyhedral surfaces}

A \emph{polyhedral surface} is a surface obtained from a set of disjoint polygons in the Euclidean plane $\RR^2$ by pairwise identifying their sides (which have to be of the same length for each identified pair). More precisely, let $D_1,\dots, D_m$ be a family of pairwise disjoint polygons, where $D_i$ has $k_i$ sides $A_{ij}$ ($1\le j\le k_i$). Let us take the set $S$ of the $k_1+\cdots + k_m$ sides of these polygons, $S = \{ A_{ij} \mid i\in [m], j\in [k_i] \}$, and consider an involution $\mu: S\to S$, called a \emph{matching} of these sides, such that any matched pair of sides have the same length: $\ell( \mu(A_{ij})) = \ell(A_{ij})$. For each $A_{ij}$, whose matching side $\mu(A_{ij})$ is not equal to $A_{ij}$, select an orientation and then identify the two sides by pasting them together according to the chosen orientations. It is easy to see that this way we always obtain a metric surface when we consider its intrinsic metric. It is clear that every geodesic path in a polyhedral surface consists of straight-line segments inside the polygons $D_1,\dots,D_m$. The edges of the polygons that are matched with themselves form the boundary of the resulting surface. We assume, though, that the polyhedral surface is connected. If all polygons forming a polyhedral surface are triangles, then we say that the surface is \emph{triangulated}.

The polygons $D_1,\dots,D_m$ are said to be the \emph{faces} (or sometimes \emph{$2$-faces}) of the polyhedral surface, each side $A_{ij}$ identified with $\mu(A_{ij})$ is an \emph{edge} (or a \emph{$1$-face}), the endpoints of the edges are the \emph{vertices} (or \emph{$0$-faces}). Each vertex has several polygons identified cyclically around it and may be part of the boundary or an interior vertex. The corners of the polyhedral surface are precisely those vertices, whose incident polygons make the total angle around the vertex smaller than $2\pi$. We refer to \cite{AlZa67} for further treatment of polyhedral surfaces.

\subsection{Geodesic triangulations of metric surfaces}

If a geodesic space $X$ is homeomorphic to a surface, we say that $X$ is a \emph{metric surface}.

Alexandrov and Zalgaller \cite{Al48,Za56} have shown (with full proofs in their monograph \cite{AlZa67}) that every metric surface with bounded curvature can be partitioned into convex triangles with disjoint interiors that form a triangulation of the surface. With that result in hand, they also proved that every such surface can be approximated by a polyhedral surface and can also be approximated by a surface with Riemannian local geometry. Quite recently, these results have been extended to arbitrary geodesic surfaces with one caveat that the dissection of the surface into triangles need not form a triangulation in the sense that adjacent triangles need not intersect along entire edges (see Creutz and Romney \cite{CrRo21}).

A closed subset $T$ of $X$ is a \emph{triangle} if it is homeomorphic to a disk and its boundary can be written as the union of three isometric paths. It is possible that the boundary of a triangle can also be written as the union of two isometric paths, in which case it is also called a \emph{digon}. A triangle $T$ is \emph{convex} if for any $x,y\in T$, some $(x,y)$-geodesic is contained in $T$. It is \emph{non-degenerate} if the lengths of its sides satisfy the strict triangular inequalities.
We say that a family of subsets of $X$ is \emph{non-overlapping} if any two subsets in the family have disjoint interiors, and that it is \emph{locally finite} if every point of $X$ is contained only in finitely many of these subsets.

\begin{theorem}[\cite{CrRo21}]\label{thm:triangulate surface epsilon}
  Let $X$ be a geodesic surface endowed with intrinsic metric such that every boundary component is a piecewise geodesic curve, and let $\varepsilon>0$. Then $X$ is the union of a locally finite collection of non-overlapping triangles $(T_i)_{i\in J}$, and each triangle $T_i$ $(i\in J)$ has the following properties:
  \begin{enumerate}[\rm (i)]
    \setlength\itemsep{0pt}
    \item $T_i$ is convex,
    \item $\diam(T_i) \le \varepsilon$,
    \item $T_i$ is non-degenerate, and
    \item if a point $p\in \partial T_i$ is a corner, then $p\in\partial X$.
  \end{enumerate}
\end{theorem}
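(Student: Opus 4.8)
\emph{Remark.} The statement is quoted from Creutz and Romney, so what follows is only the strategy I would follow to reconstruct it. The plan is to reduce the assertion to triangulating small geodesically convex cells, and to produce those cells by a net-and-convex-hull construction.

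First I would fix a scale $\delta$ much smaller than $\varepsilon$, small enough that every metric ball $B(p,2\delta)$ is a topological disk (this uses only that $X$ is a surface, hence topologically locally Euclidean) and small enough that geodesics between points of such a ball stay in a ball of radius comparable to $\delta$. By compactness a uniform such $\delta$ exists. I would then take a maximal $\delta$-separated set $P=\{p_1,\dots,p_N\}$, a finite $\delta$-net, chosen in \emph{general position}, meaning no three of its points lie on a common geodesic; this is the perturbation that I expect to buy non-degeneracy later. The boundary breakpoints of the piecewise-geodesic $\partial X$ would be included among the vertices so that the boundary is respected.

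The core construction is local. Inside each ball I would form geodesic convex hulls of small subsets of net points. A geodesically convex topological disk of diameter at most $\varepsilon$ can be cut into convex triangles by a fan of geodesic diagonals from one vertex: convexity of the hull guarantees each diagonal lies inside it, and at the chosen scale each resulting sub-triangle is itself convex (this is where the smallness of $\delta$, forcing near-uniqueness of geodesics, is used), giving (i). The general-position choice of $P$ rules out the collinear configurations, so the strict triangle inequalities of (iii) hold, and the diameter bound (ii) is inherited because every piece sits inside a ball of radius comparable to $\delta$. I would then glue the local decompositions into a global non-overlapping family; since the theorem asks only for a dissection (adjacent triangles need not meet along whole edges), the gluing can be done greedily, subtracting already-covered convex pieces and re-triangulating the small leftover disks by induction on the number of net points they meet. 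Finiteness, hence local finiteness, follows from compactness.

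For (iv) I would invoke the structural fact stated earlier that the interior corners of $X$ form a totally path-disconnected set: since each edge is a geodesic, hence a path, it cannot consist of interior corners, and by perturbing the net points and rerouting finitely many diagonals I would arrange that interior corners fall in triangle interiors, so that any corner lying on some $\partial T_i$ belongs to $\partial X$. The hard part will be the convexification step itself. Unlike the bounded-curvature setting of Alexandrov and Zalgaller, there is no curvature bound to certify that small geodesic triangles are convex, so convexity must be engineered by taking genuine convex hulls and controlling their overlaps; the delicate point, and the one the cited argument treats with care, is showing that the hull of a small point set is actually a disk of controlled diameter rather than a region with holes or tendrils, and that this can be done uniformly even where the local geometry is singular (branching geodesics, dense conical points).
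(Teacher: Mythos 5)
First, a point of comparison: the paper does not prove this theorem at all. It is imported verbatim from Creutz and Romney \cite{CrRo21} (the attribution is in the theorem header), and the only related argument in the paper is Theorem \ref{thm:make boundary piecewise geodesic} in the appendix, which merely arranges the hypothesis that the boundary be piecewise geodesic. So your proposal can only be judged on its own terms, and as a reconstruction strategy it contains gaps that are not just deferred details.

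The central one is your reliance on ``smallness of $\delta$, forcing near-uniqueness of geodesics.'' On a general geodesic surface there is no scale below which geodesics between nearby points are unique or nearly unique: near a conical point of total angle greater than $2\pi$ there are pairs of points at arbitrarily small distance joined by two distinct geodesics, and such singular points can accumulate or be dense. This breaks the step where you cut a convex disk by a fan of diagonals and claim each sub-triangle is convex: two points of a sub-triangle may be joined by a geodesic that exits the sub-triangle while remaining in the parent hull, and no choice of scale rules this out. Relatedly, the convex-hull step you flag as ``delicate'' is really the whole theorem: without a curvature bound, the geodesic convex hull of even three nearby points is not known to be a compact disk of comparable diameter (convexity must be closed under taking \emph{all} geodesics between all pairs of points already included, which can force the hull to grow uncontrollably), and this is exactly the obstruction that makes the Alexandrov--Zalgaller argument fail and that \cite{CrRo21} had to circumvent by a different construction. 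The greedy gluing is also circular: subtracting already-covered convex pieces from a disk leaves remainders that need not be disks, need not have geodesic boundary, and whose convex dissection is the original problem again, so the proposed induction has nothing to recurse on. One small point in your favour concerning (iv): since a corner by definition never occurs as an interior point of an isometric path, the interior of every triangle side automatically avoids corners, so (iv) only constrains the finitely many triangle vertices; but your perturb-and-reroute argument still requires the rest of the construction to be stable under moving those vertices, which is not addressed.
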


We will need an extension of this result to deal with boundary components that are not piecewise geodesic. See Theorem \ref{thm:make boundary piecewise geodesic} in the appendix.

\subsection{Topological and metric properties of geodesic surfaces}

Topologically, every compact surface is determined by three parameters: \emph{orientability} (either being orientable or nonorientable), its \emph{genus}, and the \emph{number of boundary components}. The orientable surface $S=S_{g,k}$ of genus $g$ with $k$ boundary components ($g\ge0, k\ge0$) has \emph{Euler characteristic} $\chi(S)=2-2g-k$ and the nonorientable surface $N=N_{g,k}$ of genus $g$ with $k$ boundary components ($g\ge1, k\ge0$) has \emph{Euler characteristic} $\chi(N)=2-g-k$. In order to treat surfaces with the same Euler characteristic alike, we say that $S_{g,k}$ and $N_{g,k}$ have \emph{Euler genus} equal to $2g$ and $g$, respectively.

However, when it comes to the intrinsic metric on geodesic surfaces, there are other parameters that distinguish them. We list some that we will use in the paper. Let $X$ be a geodesic surface. Then we define:
\begin{itemize}
  \item The \emph{diameter} of $X$ is the maximum distance between points in $X$, $$\diam(X) = \max \{d(x,y)\mid x,y\in X\}.$$
  \item \emph{Boundary separation} is the minimum distance between two distinct boundary components. We define the boundary separation be infinite if there is at most one boundary component.
  \item \emph{Systolic girth}, denoted as $\sys(X)$, is the length of a shortest noncontractible curve on the surface, and such a curve is said to be a \emph{systole} in $X$. If the surface is simply connected ($X = S_{0,0}$ or $S_{0,1}$), then the systolic girth is infinite.
  \item \emph{Essential systolic girth}, denoted as $\sys_0(X)$, is the length of a shortest curve in $X$ that is noncontractible on the surface $\widehat X$ that is obtained from $X$ by capping off each boundary component of $X$.
  \item For a boundary component $B\subseteq \partial X$, the \emph{systolic girth for $B$}, $\sys(X,B)$, is the minimum length of a noncontractible curve with its endpoints on $B$ fixed. Here we say the curve with its endpoints $x,y\in B$ is \emph{contractible} if it can be deformed to one of the two $(x,y)$-segments on $B$, where the contraction homotopy keeps the two endpoints $x$ and $y$ fixed all the time.
\end{itemize}

It is easy to see that systole curves are geodesic in $X$ and that each systole and essential systole is a simple closed curve.

\section{Game of Cops and Robber on geodesic spaces}

\subsection*{Rules of the game}

A more general setup for the game has been introduced in \cite{Mo21}. Here we only treat the \emph{standard game}.
Let $X$ be a compact, path-connected metric space endowed with intrinsic metric $d$, and let $k\ge1$ be an integer. The \emph{Game of Cops and Robber} on $X$ with $k$ cops is defined as follows. The first player, who controls the robber, selects the \emph{initial positions} for the robber and for each of the $k$ cops. Formally, this is a pair $(r^0,c^0)\in X^{k+1}$, where $r^0\in X$ is robber's position and $c^0 = (c_1^0,\dots,c_k^0)\in X^k$ are positions of the cops. The same player selects his \emph{agility function}, which is a map $\tau: \NN\to\RR_+$ and defines the lengths of the steps of the game.
The agility function must allow for the total duration of the game to be infinite, which means that $\sum_{n\ge1} \tau(n) = \infty$.

After the initial position and the agility function are chosen, the game proceeds as a discrete game in consecutive steps. Having made $n-1$ steps $(n\ge1)$, the players have their positions $(r^{n-1},c_1^{n-1},\dots,c_k^{n-1})\in X^{k+1}$. The $n$th step will have its duration determined by the agility: the move will last for time $\tau(n)$, and each player can move with unit speed up to a distance at most $\tau(n)$ from his current position.
So, the robber moves to a point $r^n\in X$ at distance at most $\tau(n)$ from its current position, i.e. $d(r^{n-1}, r^n)\le \tau(n)$. The destination $r^n$ is revealed to the cops. Then each cop $C_i$ ($i\in[k]$) selects his new position $c_i^n$ at distance at most $\tau(n)$ from its current position, i.e. $d(c_i^{n-1}, c_i^n)\le \tau(n)$. The game stops if $c_i^n = r^n$ for some $i\in[k]$. In that case, the \emph{value of the game} is 0 and we say that the cops \emph{have caught} the robber. Otherwise the game proceeds.
If it never stops, the \emph{value of the game} is
\begin{equation}\label{eq:value of game}
v = \inf_{n\ge0} \min_{i\in[k]} d(r^n, c_i^n).
\end{equation}
If the value is 0, we say that the \emph{cops won} the game; otherwise the \emph{robber wins}. Note that the cops can win even if they never catch the robber.

\subsection*{The cop number of a game space}

The compact geodesic space $X$ on which we play the game of Cops and Robber will be referred to as the \emph{game space}.
Given a game space $X$, let $k$ be the minimum integer such that $k$ cops win the game on $X$. This minimum value will be denoted by $c(X)$ and called the \emph{cop number} of $X$. If such a $k$ does not exist, then we set $c(X)=\infty$. Similarly we define the \emph{strong cop number} $c_0(X)$ as the minimum $k$ such that $k$ cops can always catch the robber.

Since $X$ is compact, for every $\varepsilon>0$ there exists an integer $k$ such that $k$ cops can always achieve the value of the game be less than $\varepsilon$. (Place the cops at the centers of open balls of radius $\varepsilon$ that cover $X$. Then, no matter where the robber is, he will be at distance less than $\varepsilon$ from one of the cops.) Hence, with the growing number of cops, the value of the game tends to 0. This simple observation does not guarantee that $c(X)$ is finite. However, we conjecture the following.

\begin{conjecture}
  Every game space $X$ has a finite cop number, $c(X) < \infty$.
\end{conjecture}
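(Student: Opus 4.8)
Since the statement is a conjecture rather than a theorem, I do not expect to produce a complete proof; instead I describe the most promising line of attack together with the point at which it currently breaks down. The plan is to reduce the general problem to the case of finite-dimensional piecewise-linear geodesic spaces by approximation, and then to attack those by a guarding argument that is recursive in the dimension of the cells. The covering observation recorded just above already shows that for every $\varepsilon>0$ a finite number $k(\varepsilon)$ of cops can force the value of the game below $\varepsilon$; the entire difficulty is to replace the family $(k(\varepsilon))_{\varepsilon>0}$, which a priori grows without bound as $\varepsilon\to0$, by a single finite bound.

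First I would try to exploit the guarding technique that underlies the planar result of Aigner and Fromme and the surface results of this paper. The key building block is that a single cop can \emph{guard} an isometric path (a shortest geodesic segment): after finitely many steps the cop can shadow the projection of the robber onto that segment so that the robber can never touch the segment without being caught. In a finite PL geodesic space one would cut $X$ along a finite family of guarded isometric paths (or, in higher dimension, guarded convex ``walls'') so that the complement breaks into pieces of strictly smaller complexity, and then recurse. For metric graphs this is exactly the content of Theorem \ref{thm:metric graph}, and for surfaces it is carried out via convex triangulations such as those supplied by Theorem \ref{thm:triangulate surface epsilon}; the hope is that an analogous dissection into convex cells, combined with a dimension induction, yields a bound depending only on the dimension and combinatorial type of the complex rather than on $\varepsilon$.

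The main obstacle lies in two places. The first is the approximation step: passing from $X$ to a nearby metric graph or polyhedral complex enlarges the robber's freedom, so a winning cop strategy on the approximant need not transfer back, and one must instead run the strategy on $X$ itself with error terms controlled by the agility function $\tau$ — this is delicate because it is the robber, not the cops, who chooses $\tau$. The second, and deeper, obstacle is that the set of corners of a geodesic space can be uncountable (it may contain a Cantor set), so even a two-dimensional $X$ need not admit a genuinely finite convex dissection with the regularity one wants for a clean induction, and guarding a single geodesic wall no longer forces the robber into a strictly simpler region. For these reasons the statement is left as a conjecture: the finite bounds proved in this paper for metric graphs and for low-genus surfaces are precisely the cases where the dissection-and-guarding strategy can be made to work, while extending it uniformly to all compact geodesic spaces appears to require a new idea for controlling the local wildness that the intrinsic metric permits.
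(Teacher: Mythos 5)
This statement is an open conjecture in the paper, not a theorem: the paper offers no proof of it, only the surrounding observation that for each $\varepsilon>0$ finitely many cops can force the value below $\varepsilon$ (with a bound that may blow up as $\varepsilon\to0$), which is exactly the gap you identify. You correctly refrain from claiming a proof, and your proposed line of attack --- dissection into guarded isometric paths or convex cells, recursion on complexity, and the approximation difficulties caused by the robber's choice of agility and by the possibly uncountable set of corners --- is consistent with the techniques the paper actually deploys in the cases it does resolve (metric graphs, low-genus surfaces) and with its stated reasons for leaving the general case open. There is nothing to correct here because there is nothing to verify: no proof exists on either side, and your assessment of where the known methods fail is accurate.
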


However, there are geodesic spaces, whose strong cop number $c_0(X)$ is infinite.  One example that came out in \cite{IrMo22} is a particular compact subspace of the unit ball in $\ell^2(\NN)$, which is defined as the set of all infinite sequences $(x_n)_{n\ge1}$ with $\ell^2$-norm $\sum_{n\ge1}x_n^2 \le1$. In this ball, one cop can use the \emph{radial strategy} of Besicovitch (see \cite{IrMo22}) to approach the robber arbitrarily close. However, no finite number of cops can catch the robber. The message of this example is that $c(X)$ says something about the global structure, but $c_0(X)$ bears some information about the local properties, which is about the dimension in this case.

\subsubsection*{Strategies and value of the game}

The value of the game when played in $X$ is defined by (\ref{eq:value of game}). Note that the strategy of a player depends not only on the current position but also on the agility chosen by the robber at the very beginning. To formalize this dependence, we define the \emph{full strategy} of the robber in such a way that it includes the choice of initial position and agility. This choice is a formal part of his strategy, which is assumed implicitly, and having made this choices, we then treat the game as starting at some initial position and having a fixed agility $\tau$. The rest of the robber's strategy and a strategy of the cops can be defined as follows. A \emph{strategy of the robber} is a function $s: X\times X^k\times \RR_+ \to X$, $(r,c,t) \mapsto r'$, such that $d(r,r')\le t$. This can be interpreted as having the robber and the cops in position $(r,c)$, and having step of duration $t$. The strategy tells us to move the robber from $r$ to $r'$ along some geodesic of length $d(r,r')$. Then, each cop $C_i$ moves from his current position $c_i$ to a point $c_i'$ at distance at most $t$ from $c_i$. The choice of such destinations $c' = (c_1',\dots,c_k')$ constitutes a \emph{strategy of cops}. Formally, it is a function $q: (r',c,t) \mapsto c'$. Performing the game by using both strategies gives the new position $(r',c')$.

Using agility $\tau$, initial position $(r^0,c^0)$ and strategies $s,q$ of the robber and the cops, we denote by $v_\tau(s,q)$ the value of the game when it is played using these strategies. Here we implicitly assume the initial position $(r^0,c^0)$ and agility $\tau$ selected by the robber are part of the strategy $s$. Now we define the \emph{guaranteed outcome} for each of the players. First for the robber:
$$
    \ValR(\tau) = \inf_q \sup_s v_\tau(s,q) \quad \textrm{and} \quad  \ValR = \sup_\tau \ValR(\tau).
$$
The inf-sup is considered for an arbitrary fixed agility $\tau$ and $q$ and $s$ run over all strategies of the cops and the robber (respectively). Similarly, the \emph{guaranteed outcome} for the cops is
$$
    \ValC(\tau) = \sup_s \inf_q v_\tau(s,q) \quad \textrm{and} \quad  \ValC = \sup_\tau \ValC(\tau).
$$
For each $\varepsilon>0$, there is $q$ such that for every $s$, $v_\tau(s,q) < \ValR(\tau)+\varepsilon$. This implies that $$\ValC(\tau) \le \ValR(\tau) \quad \textrm{and} \quad  \ValC \le \ValR.$$
If $\ValC = 0$, then we say that \emph{cops win} the game. If $\ValR>0$, then the \emph{robber wins}.

It is an interesting question whether it can happen that $\ValC < \ValR$ for some game space $X$ and some $k$. In particular, is it possible that both players, the cops and the robber win the game? This question was offered as the main open problem in the afore-mentioned work by Bollob\'as et al.~\cite{BoLeWa12}. For our version of the game, this cannot happen. Namely, the following ``min-max theorem" was proved in \cite{Mo21}. In that paper it is first shown that it is always in favor of the robber to use decreasing agility functions (meaning $\tau(n+1)\le \tau(n)$ for all $n\ge1$), in which case one can prove that the values of $\ValC$ and $\ValR$ are the same.

\begin{theorem}\label{thm:ValC=ValR}
  For every decreasing agility $\tau$ we have $\ValC(\tau)=\ValR(\tau)$. Consequently, $\ValC=\ValR$.
\end{theorem}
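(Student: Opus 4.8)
The plan is to prove the single-agility identity $\ValC(\tau)=\ValR(\tau)$ and then obtain $\ValC=\ValR$ as a short corollary. Since the inequality $\ValC(\tau)\le\ValR(\tau)$ is already recorded above, the entire content lies in the reverse inequality $\ValR(\tau)\le\ValC(\tau)$. The guiding observation is that, because the cops move second in every round, a cop strategy is already a reaction to the robber's revealed position, so the game is Markovian: both strategies are stationary functions of the current state $(r,c)$ and of the step length $t=\tau(n)$. I would exploit this by organizing everything around a Bellman-type value iteration.

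First I would introduce truncated value functions. For a horizon $N$ set $V_{N+1}^{(N)}\equiv\diam(X)$ (a harmless cap, since every relevant distance is at most $\diam(X)$) and define, for $1\le n\le N$,
\[
V_n^{(N)}(r,c)=\sup_{d(r,r')\le\tau(n)}\ \inf_{d(c_i,c_i')\le\tau(n)\,\forall i}\ \min\Bigl(\min_{i} d(r',c_i'),\,V_{n+1}^{(N)}(r',c')\Bigr).
\]
The closed balls of radius $\tau(n)$ are compact because $X$ is compact, and $d$ is continuous, so a downward induction in $n$ shows each $V_n^{(N)}$ is continuous; the outer $\sup$ and inner $\inf$ are then attained, and by the Kuratowski--Ryll-Nardzewski selection theorem one may pick Borel-measurable optimal moves, i.e.\ honest stationary strategies. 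Because the order of the two optimizations matches the order of play in a round (robber first, then cops), backward induction shows the $N$-round game is determined, its value being $\min\bigl(\min_i d(r^0,c_i^0),V_1^{(N)}(r^0,c^0)\bigr)$, attained through stationary strategies on both sides. Adding rounds only lowers the running minimum, so $V_n^{(N)}$ is nonincreasing in $N$; I write $V_n:=\inf_N V_n^{(N)}$ and $W:=\inf_N\min\bigl(\min_i d(r^0,c_i^0),\,V_1^{(N)}(r^0,c^0)\bigr)$.

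The cops' bound is the easy half. Since $v\le v^{(N)}$ for every $N$, where $v^{(N)}=\min_{0\le n\le N}\min_i d(r^n,c_i^n)$, the cops never need more than a finite horizon to drive some distance down: fixing $N$ with $V_1^{(N)}(r^0,c^0)\le W+\varepsilon$ and having the cops play the $N$-round optimal stationary strategy during rounds $1,\dots,N$ (and arbitrarily afterwards) yields $v_\tau(s,q)\le v^{(N)}_\tau(s,q)\le W+\varepsilon$ against every $s$, so $\ValR(\tau)\le W$. The robber's bound is the substantive half: he must keep the running minimum above $W-\varepsilon$ \emph{forever}. I would first show that the limit $V_n$ satisfies the Bellman equation itself, i.e.\ that the decreasing limit passes through the $\sup$--$\inf$--$\min$; this is where compactness and continuity combine with monotone convergence. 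The robber then plays the $V$-greedy move, choosing at round $n$ a position $r^n$ that attains the defining $\sup$ of $V_n$ up to an error $\delta_n$ with $\sum_n\delta_n\le\varepsilon$. The Bellman equation guarantees that, whatever the cops reply, both the current distance $\min_i d(r^n,c_i^n)$ and the value-to-go $V_{n+1}(r^n,c^n)$ remain at least $V_n(r^{n-1},c^{n-1})-\delta_n$; telescoping the errors keeps the running minimum at least $W-\varepsilon$ for all time, so $\ValC(\tau)\ge W-\varepsilon$. Letting $\varepsilon\to0$ gives $\ValR(\tau)\le W\le\ValC(\tau)$, and with weak duality $\ValC(\tau)=\ValR(\tau)=W$.

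I expect the main obstacle to be precisely this infinite-horizon step for the robber: justifying that the monotone limit $V_n$ obeys the Bellman recursion and, above all, that the resulting greedy play is realizable as a \emph{stationary} strategy of the permitted form $s(r,c,t)$. The delicate point is that the optimal move may depend on the round $n$ through the tail of the agility sequence, whereas a stationary strategy only sees $t=\tau(n)$; it is exactly the hypothesis that $\tau$ is decreasing that controls this dependence, makes the value iteration converge, and renders the limiting survival strategy well defined. Once $\ValC(\tau)=\ValR(\tau)$ holds for every decreasing $\tau$, the final assertion $\ValC=\ValR$ follows by taking suprema over $\tau$: invoking the previously established fact that it is never disadvantageous for the robber to replace an arbitrary agility by a decreasing one, the suprema defining $\ValC$ and $\ValR$ are unchanged upon restriction to decreasing agilities, where the two quantities already agree.
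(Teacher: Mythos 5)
The paper does not actually prove Theorem~\ref{thm:ValC=ValR}: it is quoted from \cite{Mo21}, so there is no in-text proof to compare against. Judged on its own terms, your architecture is reasonable and large parts of it are sound: the finite-horizon backward induction, the continuity of $V_n^{(N)}$ via Berge's maximum theorem, the monotonicity in $N$, the Dini-type argument for pushing the decreasing limit through the outer supremum to get the Bellman equation for $V_n=\inf_N V_n^{(N)}$, and the telescoping estimate showing that greedy play with errors $\delta_n$ summing to $\varepsilon$ keeps the running minimum above $W-\varepsilon$ are all correct in a setting where strategies may depend on the round index $n$. The inequality $\ValR(\tau)\le W$ via truncation is likewise fine at that level of generality.

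The genuine gap is the one you flag in your last paragraph and then do not close. In this paper a strategy is by definition a \emph{stationary} map $s(r,c,t)$ (respectively $q(r',c,t)$): it sees only the current position and the current step length, not the round number. Your greedy robber move at round $n$ is chosen to be $\delta_n$-optimal for $V_n$, and $V_n$ depends on the entire tail $(\tau(n),\tau(n+1),\dots)$. For a decreasing agility that repeats a value finitely often, say $\tau=(1,1,\tfrac12,\tfrac12,\tfrac13,\tfrac13,\tfrac13,\dots)$, two rounds can share the same $t$ while having different tails and hence genuinely different value functions $V_n$, so the round is not recoverable from $(r,c,t)$ and the greedy strategy is not expressible in the permitted form. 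Your assertion that ``it is exactly the hypothesis that $\tau$ is decreasing that controls this dependence'' is exactly the missing lemma, not a proof of it; and the same objection applies to the cops' truncated strategy in your easy half, whose moves depend on the number of remaining rounds. Symptomatically, nothing in the portions of the argument you actually carry out uses monotonicity of $\tau$ at all, so if your proof were complete it would establish the identity for arbitrary agility with no restriction --- whereas the paper (following \cite{Mo21}) explicitly routes the result through the reduction to decreasing agilities, indicating that the hypothesis must do concrete work somewhere. Until you either (a) show how to convert the round-dependent greedy strategy into a legal stationary one for every decreasing $\tau$ (including those with repeated blocks), or (b) work in an explicitly history-dependent strategy class and then reconcile that with the paper's definitions, the substantive half $\ValR(\tau)\le\ValC(\tau)$ is not established.
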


This ``min-max theorem" implies that for every $\varepsilon>0$, there are near-optimal strategies for both players, and if either one of them uses his strategy, the other player cannot do more than $\varepsilon$ better than just using his own near-optimal strategy. The precise statement is given next.

\begin{corollary}
Suppose that $\tau$ is a decreasing agility function and that $\varepsilon>0$. There are strategies $s_\varepsilon$ and $q_\varepsilon$ for the robber and the cops (respectively) such that for any strategy $s$ of the robber and any strategy $q$ of the cops,
$$
  v_\tau(s,q_\varepsilon) - \varepsilon < v_\tau(s_\varepsilon,q_\varepsilon) < v_\tau(s_\varepsilon,q) + \varepsilon.
$$
Consequently,
$$
  \ValR(\tau)-\varepsilon \le v_\tau(s_\varepsilon,q_\varepsilon)\le \ValC(\tau) + \varepsilon.
$$
\end{corollary}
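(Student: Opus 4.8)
The plan is to extract the two near-optimal strategies directly from the definitions of $\ValC(\tau)$ and $\ValR(\tau)$, and then let Theorem~\ref{thm:ValC=ValR} do the real work by collapsing the two guaranteed outcomes to a single number. Write $V = \ValC(\tau) = \ValR(\tau)$ for this common value. Since $\ValR(\tau) = \inf_q \sup_s v_\tau(s,q) = V$, the defining property of the infimum produces a cop strategy $q_\varepsilon$ that is $\tfrac{\varepsilon}{2}$-good against every robber, namely $\sup_s v_\tau(s,q_\varepsilon) < V + \tfrac{\varepsilon}{2}$; in particular $v_\tau(s,q_\varepsilon) < V + \tfrac{\varepsilon}{2}$ for every robber strategy $s$. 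Dually, since $\ValC(\tau) = \sup_s \inf_q v_\tau(s,q) = V$, the defining property of the supremum produces a robber strategy $s_\varepsilon$ with $\inf_q v_\tau(s_\varepsilon,q) > V - \tfrac{\varepsilon}{2}$, so that $v_\tau(s_\varepsilon,q) > V - \tfrac{\varepsilon}{2}$ for every cop strategy $q$. No exact optimizers are needed here: these $\tfrac{\varepsilon}{2}$-approximate optimizers exist purely by the meaning of $\inf$ and $\sup$, and this is the only place where the choice of $s_\varepsilon$ and $q_\varepsilon$ is made.

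With these two strategies fixed, I would verify the displayed double inequality by two short chains, each combining one of the two bounds above with its partner specialized at $q_\varepsilon$ or $s_\varepsilon$. For the left inequality, any robber $s$ satisfies $v_\tau(s,q_\varepsilon) - \varepsilon < (V + \tfrac{\varepsilon}{2}) - \varepsilon = V - \tfrac{\varepsilon}{2} < v_\tau(s_\varepsilon,q_\varepsilon)$, where the last step is the $s_\varepsilon$-bound specialized at $q = q_\varepsilon$. For the right inequality, any cop $q$ satisfies $v_\tau(s_\varepsilon,q_\varepsilon) < V + \tfrac{\varepsilon}{2} = (V - \tfrac{\varepsilon}{2}) + \varepsilon < v_\tau(s_\varepsilon,q) + \varepsilon$, where the first step is the $q_\varepsilon$-bound specialized at $s = s_\varepsilon$ and the last step uses the $s_\varepsilon$-bound. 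This yields $v_\tau(s,q_\varepsilon) - \varepsilon < v_\tau(s_\varepsilon,q_\varepsilon) < v_\tau(s_\varepsilon,q) + \varepsilon$ for all $s$ and $q$.

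The concluding two-sided estimate then falls out by specializing the same two bounds at the pair $(s_\varepsilon,q_\varepsilon)$: the $s_\varepsilon$-bound at $q = q_\varepsilon$ gives $v_\tau(s_\varepsilon,q_\varepsilon) > V - \tfrac{\varepsilon}{2} = \ValR(\tau) - \tfrac{\varepsilon}{2} \ge \ValR(\tau) - \varepsilon$, and the $q_\varepsilon$-bound at $s = s_\varepsilon$ gives $v_\tau(s_\varepsilon,q_\varepsilon) < V + \tfrac{\varepsilon}{2} = \ValC(\tau) + \tfrac{\varepsilon}{2} \le \ValC(\tau) + \varepsilon$. I expect no genuine obstacle: the entire substantive content is already carried by the equality $\ValC(\tau) = \ValR(\tau)$ from Theorem~\ref{thm:ValC=ValR}, and the only care required is bookkeeping of the $\tfrac{\varepsilon}{2}$ cushions so that the two half-gaps add up to exactly $\varepsilon$ in each chain. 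The single point worth double-checking is that strictness of the inequalities survives; it does, precisely because the approximate optimizers are chosen with strict $\tfrac{\varepsilon}{2}$ slack.
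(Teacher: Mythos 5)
Your argument is correct and is exactly the intended one: the paper states this corollary without a written proof, as an immediate consequence of Theorem~\ref{thm:ValC=ValR}, and your extraction of $\tfrac{\varepsilon}{2}$-optimizers $q_\varepsilon$ and $s_\varepsilon$ from the definitions of $\ValR(\tau)$ and $\ValC(\tau)$, combined with their common value $V$, is precisely the bookkeeping the author leaves to the reader.
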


This corollary gives an important outcome that any cops and robber game can be described as a finite game within an arbitrary precision. This approximation result is described next.

Suppose that we fix agility $\tau$ and a positive integer $N$. Then we can consider $N$ steps of the game, and let $T=T_N(\tau)=\sum_{i=1}^N \tau(i)$ be the duration of the game during these $N$ steps. We say that these $N$ steps present an \emph{$\varepsilon$-approaching game for agility $\tau$} if the cops have a strategy $q_\varepsilon$ such that within these $N$ steps, their distance from the robber is at most $\ValC(\tau) + \varepsilon$.

\begin{proposition}\label{prop:epsilon-game exists}
  For every agility $\tau$ and every $\varepsilon>0$, there is an $\varepsilon$-approaching game with finitely many steps.
\end{proposition}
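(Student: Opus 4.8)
The plan is to use the near-optimal cop strategy furnished by the corollary above and then to convert the infimum over infinitely many steps in~(\ref{eq:value of game}) into a bound that is already attained after boundedly many steps, the passage from "infinitely many'' to "finitely many'' being forced by compactness of the space of robber trajectories. First I would fix the cops' strategy: applying the corollary with $\varepsilon/4$ in place of $\varepsilon$ yields a cop strategy $q=q_{\varepsilon/4}$ such that for \emph{every} robber strategy $s$ one has $v_\tau(s,q)<\ValC(\tau)+\varepsilon/2$. Since a cop strategy is a stationary response $(r,c,t)\mapsto c'$, fixing $q$ makes the entire play a function of the robber's trajectory alone: given an admissible path $\omega=(r^n)_{n\ge0}$ with $d(r^{n-1},r^n)\le\tau(n)$, the cop positions are generated by the recursion $c^n=q(r^n,c^{n-1},\tau(n))$ from the chosen initial positions. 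Because the robber may in particular follow any prescribed admissible path (ignoring the cops), the guarantee on $q$ gives, writing $d_n(\omega)=\min_{i\in[k]}d(r^n,c_i^n)$, $f_N(\omega)=\min_{0\le n\le N}d_n(\omega)$, and $v(\omega)=\inf_N f_N(\omega)$, that $v(\omega)<\ValC(\tau)+\varepsilon/2$ for every admissible $\omega$.

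For the compactness step, let $\Omega\subseteq X^{\NN}$ be the set of admissible robber trajectories. Each constraint $d(r^{n-1},r^n)\le\tau(n)$ is closed, so $\Omega$ is a closed subset of the compact product $X^{\NN}$ (Tychonoff) and hence compact. The functions $f_N$ are non-increasing in $N$ with pointwise infimum $v$, which we have bounded above by $\ValC(\tau)+\varepsilon/2$ on all of $\Omega$. Consider the nested superlevel sets $B_N=\{\omega\in\Omega : f_N(\omega)\ge \ValC(\tau)+\varepsilon\}$. Then $\bigcap_N B_N=\{\omega : v(\omega)\ge\ValC(\tau)+\varepsilon\}=\emptyset$. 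Provided each $B_N$ is closed, compactness and the finite intersection property force some $B_N$ to be empty; this is exactly the Dini mechanism. For that $N$ we obtain $f_N(\omega)<\ValC(\tau)+\varepsilon$ for all admissible $\omega$, i.e. the cops, playing $q$, come within distance $\ValC(\tau)+\varepsilon$ of the robber during the first $N$ steps no matter how he plays, and these $N$ steps constitute the required $\varepsilon$-approaching game.

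The crux, and the step I expect to be the main obstacle, is showing that each $B_N$ is closed, equivalently that every $d_n$ is upper semicontinuous on $\Omega$. Since $d$ is continuous and $f_N$ is a finite minimum of the $d_n$, this reduces to controlling the dependence of the cop positions $c^n(\omega)$ on $\omega$: if $\omega\mapsto c^n(\omega)$ is continuous then $d_n$ is continuous, and the argument closes. Continuity of $c^n$ follows from continuity of the stationary map $q$ in its spatial arguments, since $c^n$ arises from $q$ by an $n$-fold composition. The genuine difficulty is that a cop response of the natural form ``move toward the robber along a shortest geodesic'' need not be continuous, because shortest geodesics may fail to be unique.

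I would resolve this in one of two ways. The first is to argue that the near-optimal strategies constructed in \cite{Mo21} may be taken continuous (or at least so that $d_n$ is upper semicontinuous), after which the argument above applies verbatim. The second, which avoids committing to any particular strategy, replaces $q$ by the finite-horizon value functions obtained by backward induction over the $N$ steps, $U_n(r,c)=\max_{r'}\min_{c'}\min\bigl(\min_i d(r',c_i'),\,U_{n+1}(r',c')\bigr)$, where $r'$ and $c'$ range over the compact admissible move balls. Because $X$ is compact and $d$ continuous, Berge's maximum theorem shows inductively that every $U_n$ is \emph{continuous}, so the truncated game has a genuine value $W_N(\tau)$ realized by continuous value functions and an honest cop strategy; one checks that $W_N(\tau)$ is non-increasing in $N$ and bounded below by $\ValC(\tau)$. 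The remaining point is then the finite-to-infinite horizon convergence $W_N(\tau)\downarrow\ValC(\tau)$, which again rests on compactness of $\Omega$ together with the min-max identity $\ValC=\ValR$ of Theorem~\ref{thm:ValC=ValR}. Either route reduces the proposition to the same compactness principle, and the semicontinuity bookkeeping is where the real care is needed.
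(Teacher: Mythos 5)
The paper states this proposition without proof (it is presented as a consequence of Theorem \ref{thm:ValC=ValR} and the corollary preceding it), so there is no in-paper argument to match yours against; I can only assess your proposal on its own terms. Your overall plan --- fix a single cop strategy $q$ with $\sup_s v_\tau(s,q)<\ValC(\tau)+\varepsilon/2$, observe that the play is then a function of the robber trajectory alone, and use compactness of the trajectory space $\Omega\subseteq X^{\NN}$ together with a Dini-type argument on the nested sets $B_N$ to extract a uniform finite horizon $N$ --- is the natural way to prove the statement, and the reduction is set up correctly (including the bookkeeping $v_\tau(s,q_{\varepsilon/4})<\ValC(\tau)+\varepsilon/2$ from the corollary, modulo the fact that $\ValC(\tau)=\ValR(\tau)$ is only asserted for decreasing agility, whereas the proposition is stated for every $\tau$).

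However, the proof is not complete, and you have put your finger on exactly where it breaks: for the finite intersection property to apply you need each $B_N$ closed, i.e.\ each $f_N$ upper semicontinuous on $\Omega$, and this fails for an arbitrary cop strategy $q$, which is just a function $(r',c,t)\mapsto c'$ with no regularity whatsoever. Neither of your two proposed repairs is carried out. Route (a) --- that the near-optimal strategies of \cite{Mo21} ``may be taken continuous'' --- is an unsubstantiated hope, and as you yourself note the most natural strategies (move along a shortest geodesic toward a target) are genuinely discontinuous where geodesics fail to be unique. Route (b) --- backward induction and Berge's maximum theorem to obtain continuous finite-horizon value functions $U_n$ and values $W_N(\tau)$ --- is the more promising one, but it displaces rather than resolves the difficulty: the step $W_N(\tau)\downarrow\ValC(\tau)$, which you defer to ``compactness of $\Omega$ together with the min-max identity,'' is essentially equivalent to the proposition itself. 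Proving it requires, for instance, passing to the decreasing limit $U^{(\infty)}=\lim_N U^{(N)}$ (which is only upper semicontinuous), verifying that the dynamic-programming recursion survives the limit, and extracting from a hypothetical position with $U^{(\infty)}>\ValR(\tau)+\delta$ an infinite robber strategy keeping every cop at distance greater than $\ValR(\tau)+\delta$ forever, contradicting the definition of $\ValR(\tau)$. That extraction is where the real work lies, and it is absent from your write-up. So: right skeleton, correctly diagnosed obstruction, but the obstruction is not overcome.
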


After making $N$ steps of the $\varepsilon$-approaching game for agility $\tau$, using strategy $q_\varepsilon$ of the cops, the players come to certain position and then they continue playing. Now, the cops can use $\varepsilon/2$-approaching game for the remaining agility in steps $N+1,N+2,\dots$ using strategy $q_{\varepsilon/2}$. By definition of $\varepsilon/2$-approaching game, they come within distance $\varepsilon/2$ from the value of the game. Next, they can use $\varepsilon/3$-approaching game for the remaining agility, and so on. If the agility $\tau$ is decreasing, the strategies $q_{\varepsilon}, q_{\varepsilon/2}, q_{\varepsilon/3},\dots$ can be combined into a single strategy that is optimal for the game on $X$ since the cops come arbitrarily close to the value of the game.

Given an $\varepsilon>0$, we define $c_\varepsilon(X)$ as the minimum number of cops that guarantee to win the $\varepsilon$-approaching game on $X$. With this notation, we have the following consequence which we state for further reference.

\begin{corollary}\label{cor:epsilon-approaching suffices}
  $c(X) = \sup \{ c_\varepsilon(X) \mid \varepsilon>0 \}$.
\end{corollary}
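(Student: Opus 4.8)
The plan is to prove Corollary~\ref{cor:epsilon-approaching suffices}, namely $c(X) = \sup\{c_\varepsilon(X) \mid \varepsilon>0\}$, by establishing the two inequalities separately, with the nontrivial direction relying on the combination-of-strategies argument sketched in the paragraph immediately preceding the statement.

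First I would argue $c_\varepsilon(X) \le c(X)$ for every $\varepsilon>0$, which gives $\sup_\varepsilon c_\varepsilon(X) \le c(X)$. This direction should be essentially immediate: if $k = c(X)$ cops can win the full game on $X$, then by definition $\ValC = 0$, and for a fixed decreasing agility $\tau$ Theorem~\ref{thm:ValC=ValR} gives $\ValC(\tau) = \ValR(\tau) = 0$ as well. Then for any $\varepsilon>0$ the $k$ cops have a strategy keeping their distance from the robber at most $\ValC(\tau)+\varepsilon = \varepsilon$ within finitely many steps, which is exactly what it means to win the $\varepsilon$-approaching game (Proposition~\ref{prop:epsilon-game exists} guarantees such a finite-step game exists). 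Hence $c_\varepsilon(X) \le k = c(X)$. I should be a little careful that $c_\varepsilon(X)$ is defined via the $\varepsilon$-approaching game for a \emph{fixed} agility, so I would note that winning the full game means achieving value $0$ against every agility, and in particular the robber may as well use decreasing agility, so restricting to decreasing $\tau$ loses nothing.

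For the reverse inequality $c(X) \le \sup_\varepsilon c_\varepsilon(X)$, set $k = \sup_\varepsilon c_\varepsilon(X)$; I may assume $k<\infty$, since otherwise there is nothing to prove. The key observation is that $c_\varepsilon(X)$ is nondecreasing as $\varepsilon\to 0$ (a smaller approach-distance is a harder task for the cops), so $c_{\varepsilon/j}(X) \le k$ for every $j\ge1$, meaning $k$ cops can win each $(\varepsilon/j)$-approaching game. I would then carry out exactly the telescoping construction described in the text: fix a decreasing agility $\tau$; the cops first play the $\varepsilon$-approaching strategy $q_\varepsilon$ over its finitely many steps, arriving within distance $\ValC(\tau)+\varepsilon$ of the robber; from the resulting position they invoke $q_{\varepsilon/2}$ for the \emph{remaining} agility, then $q_{\varepsilon/3}$, and so on. Concatenating these finitely-many-step blocks yields a single strategy for $k$ cops under agility $\tau$ whose running distance to the robber has infimum at most $\inf_j (\ValC(\tau) + \varepsilon/j) = \ValC(\tau)$; since $\ValC(\tau)\ge 0$ and the value of the game is its infimum over all steps, this shows $\ValC(\tau) = 0$ is achievable, i.e.\ $k$ cops win for agility $\tau$. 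Taking the supremum over decreasing $\tau$ and again using Theorem~\ref{thm:ValC=ValR} gives $\ValC = 0$, so $c(X)\le k$.

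The main obstacle, and the point requiring the most care, is the legitimacy of concatenating infinitely many finite-step near-optimal strategies into one admissible strategy and controlling the resulting game value. Two subtleties must be checked: that the decreasing agility assumption is genuinely used to make the ``remaining agility'' in each block still a valid (decreasing) agility with infinite total duration, so that each $q_{\varepsilon/j}$ block is a bona fide $\varepsilon/j$-approaching game on the tail; and that the distances achieved in the successive blocks — each measured against the value $\ValC$ of the appropriate tail game — really drive the overall infimum to $\ValC(\tau)$ rather than merely bounding it in each block. I expect these to follow from the definitions together with the Corollary preceding Proposition~\ref{prop:epsilon-game exists}, but they are the crux where I would spend most of the verification effort; the rest of the argument is bookkeeping with suprema and infima.
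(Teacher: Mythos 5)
Your overall strategy is the paper's own: the easy inequality $c_\varepsilon(X)\le c(X)$, plus the concatenation of the finite-step strategies $q_\varepsilon, q_{\varepsilon/2}, q_{\varepsilon/3},\dots$ under a decreasing agility, which is exactly the argument the paper sketches in the paragraph immediately preceding the corollary (no further proof is given there). The bookkeeping you flag is handled correctly: the tail of a decreasing agility with divergent sum is again such an agility, and $c_{\varepsilon/j}(X)\le k$ applies to each continuation because the robber's choice of initial position is quantified over all positions, hence in particular over whatever position the previous block ends in.

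There is, however, one step in your reverse direction that does not follow as written. From ``each block ends within distance $\ValC(\tau)+\varepsilon/j$ of the robber'' you conclude that the running infimum of distances is at most $\inf_j\bigl(\ValC(\tau)+\varepsilon/j\bigr)=\ValC(\tau)$, and then that ``$\ValC(\tau)=0$ is achievable.'' But $\ValC(\tau)$ for $k$ cops is by definition the best infimum the cops can guarantee, so showing the infimum is at most $\ValC(\tau)$ is vacuous and does not force $\ValC(\tau)=0$: if $\ValC(\tau)>0$, your chain of inequalities is still satisfied and the cops have not won. The resolution is that ``winning the $\varepsilon$-approaching game,'' as the corollary and the rest of the paper use the phrase, must be read as coming within distance $\varepsilon$ of the robber, i.e.\ within $\varepsilon$ of the value $0$ (the paper explicitly says it only deploys $\varepsilon$-approaching strategies in the case $\ValC=0$). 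Under that reading the $j$-th block ends within distance $\varepsilon/j$ of the robber, the infimum over all steps is $0$, and $\ValC(\tau)=0$ follows honestly. Your forward direction already uses this reading implicitly (you substitute $\ValC(\tau)+\varepsilon=\varepsilon$ there), so the repair consists only of applying it consistently in the reverse direction rather than carrying the unknown $\ValC(\tau)$ through the limit.
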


\subsection*{Game with a tail}

When having one more cop does not matter, we may assume that one of the cops, called the \emph{tail}, will just follow the trajectory of the robber, possibly making some shortcuts. More precisely, the strategy of the tail will be at each step to find a shortest path from its position $T_0$ to the current position of the robber and start moving full speed along that geodesic. This will assure that the distance from the tail to the robber will never increase. It is clear that with this strategy of the tail, the robber cannot rest, he must almost all the time use almost full speed. We will make this fact more precise below.

Suppose that the robber has optimal strategy $s_0$ for the $\varepsilon$-approximated game and that $s_0$ includes the decision $s_0(r,c,t)=r'$, where $d(r,r')<t$. Then we say that the robber \emph{rested} for $t-d(r,r')$ time units. Suppose that $k$ cops play optimal strategy for the game with $k$ cops and that they add a tail to this strategy. Then they can come arbitrarily close to the robber. Once this is achieved, the cop that is closest to the robber becomes the tail and the other $k$ players start the $\varepsilon$-approximated game from the start (of course, the agility is now different since the game continues with the next step). If the strategy of the robber would allow for the total rest of more than time $\varepsilon$ during the remaining gameplay, the tail would catch him or would come closer to the robber than the value of the game allows. This would be a contradiction. As the conclusion we have the following statement.

\begin{theorem}\label{th:with tail never rest}
  Suppose that the robber wins the game against $k$ cops plus a tail on the game space $X$. Then the robber has $\varepsilon$-approximate strategy against $k$ cops in which he never rests.
\end{theorem}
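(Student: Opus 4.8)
The plan is to make precise the intuition that a tail, by always moving at full speed toward the robber, never loses ground and in fact gains exactly the time the robber wastes by resting; hence if the robber wins even against a tail, he can have rested only a bounded total amount, and by re‑establishing closeness this bound can be pushed down to $\varepsilon$. The no‑rest strategy against the $k$ active cops is then the robber's continuation after the pursuers have closed in.

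First I would record the basic monotonicity estimate for the tail. Suppose that at step $n$ the robber moves from $r^{n-1}$ to $r^n$ and rests for $\delta_n=\tau(n)-d(r^{n-1},r^n)\ge 0$ time units, while the tail sits at distance $D_{n-1}$ from $r^{n-1}$ and then moves at full speed $\tau(n)$ along a geodesic toward $r^n$. Before the tail moves, the triangle inequality bounds its distance to $r^n$ by $D_{n-1}+d(r^{n-1},r^n)=D_{n-1}+\tau(n)-\delta_n$, so after moving
\[
  D_n \;\le\; \max\{0,\; D_{n-1}-\delta_n\}.
\]
Thus the tail's distance to the robber is non‑increasing and drops by at least the amount rested; summing over a stretch of play in which the tail does not catch the robber, the total rest during that stretch is at most the tail's distance at its start.

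Next I would use the hypothesis. Since the robber wins against the $k$ cops plus the tail, by Theorem~\ref{thm:ValC=ValR} the game has a value $v=\ValC>0$, and under optimal cop play the minimum distance from the robber to the $k+1$ pursuers drops to within $v+\varepsilon$ after finitely many steps (Proposition~\ref{prop:epsilon-game exists}). At the first such step I re‑designate the pursuer realizing this distance as the tail. By the monotonicity estimate, from this moment on the tail's distance can only decrease, and it decreases by at least the robber's accumulated rest; hence if the robber were to rest a total of more than $\varepsilon$ during the remaining play, the tail would come strictly within distance $v$ of him, contradicting that the robber keeps every pursuer at distance at least $v$. Therefore, once the pursuers have closed in, the robber rests at most $\varepsilon$ in total for the rest of the game.

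Finally I would package this into the desired statement. After the finite approach phase the robber's continuation keeps him at distance at least $v$ from the $k$ active cops while resting at most $\varepsilon$ in total; restarting the closing‑in analysis whenever required and invoking Corollary~\ref{cor:epsilon-approaching suffices} lets me treat it as an $\varepsilon$‑approximate winning strategy against the $k$ cops alone. To pass from ``rests at most $\varepsilon$'' to ``never rests,'' I would replace each resting step by a full‑speed step ending at the same prescribed destination, spending the slack $\delta_n$ on a short detour confined to the $\delta_n$‑ball around the intended trajectory; since the accumulated slack is at most $\varepsilon$, every position is perturbed by at most $\varepsilon$, so the strategy stays winning up to an additive $\varepsilon$. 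I expect the main obstacle to be exactly this conversion together with the restart bookkeeping: one must check that a detour of the prescribed length actually fits inside $X$ (room can be scarce near corners or the boundary) and that concatenating the approach phase with the no‑rest continuation yields a single legitimate strategy whose guaranteed value is not pushed below $v-\varepsilon$. The safest route is to keep ``total rest $\le\varepsilon$'' as the working form of the conclusion and let the detours live inside $\varepsilon$‑balls, so that the $\varepsilon\to0$ limit afforded by Corollary~\ref{cor:epsilon-approaching suffices} delivers the no‑rest statement.
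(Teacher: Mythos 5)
Your proposal follows essentially the same route as the paper: the tail's distance to the robber is non-increasing and drops by at least the accumulated rest, so once the pursuers have closed in the total remaining rest is at most $\varepsilon$, and a strategy with total rest at most $\varepsilon$ is converted to a no-rest strategy by perturbing each position by at most the accumulated slack. Your explicit monotonicity estimate $D_n\le\max\{0,\,D_{n-1}-\delta_n\}$ and the detour/perturbation step simply make precise what the paper's very terse proof and the discussion preceding the theorem assert informally.
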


\begin{proof}
  The robber can use the strategy where the total resting time is less than $\varepsilon/2$. This way his game value remains more than $\ValR-\varepsilon/2$. Now, he can also keep going full speed without resting and will not make more than $\varepsilon/2$ away from where he would end up playing the $(\varepsilon/2)$-approximate strategy with tiny bit of resting. So, he would be at most $\varepsilon$ away from $\ValR$.
\end{proof}

\section{Game on metric graphs}

In this section we show that the game of cops and robber on metric graphs is essentially equivalent to the combinatorial game played on abstract graphs.

Let $G$ be a graph, and let $X=X(G)$ be the metric graph obtained from $G$ by considering each edge to be homeomorphic to the interval of length $1$ with the metric induced from the real interval $[0,1]$. Define $c(G)$ and $c(X)$ as the cop number of the graph $G$ (as in \cite{BoNo11}) and the cop number for the metric graph $X$ as defined in this paper, respectively.

\begin{theorem}\label{thm:metric graph}
  $c(G) \le c(X) \le c_0(X) \le c(G)+1$.
\end{theorem}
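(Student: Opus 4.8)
The middle inequality $c(X) \le c_0(X)$ is immediate from the definitions: a strategy for $k$ cops that always catches the robber (so that $c_i^n = r^n$ for some $n$ and $i$) produces value $0$ in every play, hence these cops win in the sense of $c(X)$. The two substantive inequalities are the lower bound $c(G) \le c(X)$, which I would prove by lifting a discrete evasion strategy of the robber to $X$, and the upper bound $c_0(X) \le c(G)+1$, which I would prove by having $c(G)$ cops shadow a discrete capturing strategy while one extra cop plays the tail.

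For $c(G) \le c(X)$ I argue contrapositively. Suppose $k = c(G)-1$, so the robber has a strategy evading $k$ cops forever in the discrete game on $G$; I will make the robber win against $k$ cops on $X = X(G)$. He selects the constant agility $\tau \equiv 1$ (the common edge length) and keeps himself at a vertex at every step-end. To feed his discrete strategy he uses a \emph{shadow} of each cop: let $s_i^n$ be a vertex of $G$ nearest to $c_i^n$, so that $d(c_i^n, s_i^n) \le \tfrac12$. The one technical point is a lemma that the shadows make legal discrete moves, i.e.\ $d(c_i^{n-1},c_i^n)\le 1$ implies $d_G(s_i^{n-1},s_i^n)\le 1$; the only obstruction is a double jump through an edge-midpoint, which I rule out by breaking the midpoint ties in the direction of the cop's motion. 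Treating $s_1^n,\dots,s_k^n$ as discrete cop positions and applying his winning strategy, the robber maintains the invariant $d_G(r^n, s_i^{n-1}) \ge 2$ for all $n\ge1$ and all $i$, because an evading robber always moves to a vertex at $G$-distance at least $2$ from each current cop position (otherwise a cop could step onto it).

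It remains to convert this invariant into a positive value. For every $i$ and every $n\ge1$,
\[
 d(r^n, c_i^n) \;\ge\; d(r^n, c_i^{n-1}) - \tau(n) \;\ge\; \bigl( d_G(r^n, s_i^{n-1}) - \tfrac12 \bigr) - 1 \;\ge\; 2 - \tfrac12 - 1 \;=\; \tfrac12 ,
\]
using $d(c_i^{n-1}, s_i^{n-1}) \le \tfrac12$ and $d(c_i^{n-1}, c_i^n) \le \tau(n) = 1$. Since the robber controls the initial positions, he may place all cops at $G$-distance at least $2$ from $r^0$; the only graphs where this fails have diameter at most $1$, forcing $c(G)=1$ and $k=0$, a trivial case. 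Hence the value is at least $\tfrac12>0$, the robber wins, and $c(X) \ge c(G)$.

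For $c_0(X) \le c(G)+1$ the plan is to let $k=c(G)$ cops execute a discrete capturing strategy on $G$, staying at vertices and chasing the nearest-vertex shadow of the robber, while a single additional cop plays the \emph{tail}: it never lets its distance to the robber increase and, by Theorem~\ref{th:with tail never rest}, forces the robber to keep moving rather than linger in the interior of an edge. The discrete strategy guarantees that after finitely many discrete moves a shadow cop occupies the robber's shadow vertex; the endgame is then a squeeze, in which a shadow cop sits on one endpoint $v$ of the edge $uv$ carrying the robber, so that the robber cannot pass through $v$, retreating toward $u$ runs him into the tail behind him, and the tail finishes him by one-dimensional pursuit on the trapped subsegment. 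I expect this last inequality to be the main obstacle, and within it the faithful simulation of the discrete game under an \emph{arbitrary} agility. Because only the robber's destination (not his path) is revealed at each step, and the agility may be coarse (one step crossing several edges) or arbitrarily fine (many steps inside one edge), I would re-clock the simulation by the robber's vertex-crossing events rather than by the game's steps and check that the shadow cops, moving at full speed over the same real-time intervals, realize the prescribed moves one edge at a time; Corollary~\ref{cor:epsilon-approaching suffices} together with the no-rest property of Theorem~\ref{th:with tail never rest} are the tools for controlling this timing, the delicate case being a robber who oscillates near a vertex without committing to cross it, which the tail's monotone approach is designed to defeat.
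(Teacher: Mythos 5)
Your overall architecture coincides with the paper's: nearest-vertex shadows of the cops for the lower bound, and $c(G)$ simulating cops plus a tail for the upper bound; the middle inequality is indeed immediate. Two things need repair. The smaller one is your tie-breaking rule in the lower bound. Breaking a midpoint tie ``in the direction of the cop's motion'' does not make the shadow's moves legal: let a cop sit at the midpoint $m$ of an edge $uv$ having arrived there moving towards $v$ (shadow $v$), and in the next step let him travel distance $1$ back through $u$ to the midpoint of an edge $uw$; your rule now assigns shadow $w$, and if $v,w$ are non-adjacent the shadow has jumped graph-distance $2$. The paper instead breaks the tie by the \emph{last vertex visited} by the cop, which assigns shadow $u$ both before and after this move and does yield legal shadow moves. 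With that fix your quantitative computation giving value at least $\tfrac12$ is fine.

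The substantive gap is in $c_0(X)\le c(G)+1$, and you correctly anticipated that this is where the difficulty lies. Your simulation projects the \emph{robber} to his nearest vertex and runs the discrete capturing strategy against that shadow; at best this puts a cop on a vertex within distance $\tfrac12$ of the robber, and your proposed endgame squeeze then requires the tail to be lodged on the edge segment between the robber and the far endpoint $u$. Nothing guarantees that configuration: the tail only maintains a non-increasing distance along shortest paths and may be approaching from elsewhere, so the robber can simply exit through $u$ and the ``finitely many discrete moves'' never terminate in a capture. The paper's proof avoids any endgame by a different synchronization: the robber's announced move is split at vertex crossings, and while the robber travels from $v$ towards $u$ reaching distance $t$ from $v$, each cop $C_i$ moves along the edge $v_iu_i$ prescribed by the discrete strategy (computed as if the robber's target were $u$) so as to be at distance exactly $t$ from $v_i$ --- not at full speed, but mirroring the robber's fractional progress. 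If the robber backtracks to $v$ the cops return to $v_i$ and only the tail has gained; when the discrete strategy eventually catches the robber at a vertex, the mirroring forces the corresponding cop to occupy the robber's exact point at that instant. This fractional mirroring is the missing idea; the tail's only role is to punish reversals and idling, not to close a one-dimensional trap.
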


\begin{proof}
Let us consider a winning strategy of the robber played on the graph $G$ against $k=c(G)-1$ cops. For the metric graph $X$, the robber chooses the same initial positions for himself and the cops at the vertices of $X$ and chooses his agility so that he can make move of length 1 at each step. He will follow his discrete strategy in $G$ and will be at a vertex at the completion of each move.

For each cop $C_i$, consider his position $c_i$ at the end of the current move. Let $v$ be the vertex of $G$ that is closest to $c_i$ in $X$. In the unlikely (but possible) event that $C_i$ is at distance $\tfrac{1}{2}$ from two vertices, we let $v$ be the last vertex visited by $C_i$ during the game. Since the cops start in vertices of $X$, this is well-defined. We call the vertex $v\in V(G)$ determined by this rule the \emph{shadow} of $C_i$ in $G$. Now, the robber considers his position and the positions of the shadows of the cops in $G$ and makes the move according to his strategy in $G$. After the cops move, their shadows move according to the rules of the game (either stay at the same position or move to an adjacent vertex). Since the strategy of the robber on $G$ is a winning strategy, the shadows never catch the robber on $G$. It is easy to see that this means that the cops do not catch the robber on $X$ and that the minimum distance from the robber to any of the cops is at least $\tfrac{1}{2}$ at all times. This proves that $c(X)\ge c(G)$.

To prove the upper bound, $k=c(G)$ cops (they will be called \emph{regular cops}) will mimic their optimal strategy from $G$, and one additional cop will serve as a tail. The complicated ingredient in this part of the proof is that the robber chooses an agility that is hard to mimic in the discrete game on the graph. The game play is divided into two stages. In the first stage, the regular cops move into the vertices of $X$ and stay there until the robber also enters a vertex. Note that this may happen in the middle of step $n$ whose total duration is $\tau(n)$, but the robber arrives to a vertex $v$ at time $t'<\tau(n)$, and proceeds by moving further to a point $x$ that is not a vertex. We may assume that the distance of $x$ from $v$ is $t$, where $0<t<1$. Now we consider stopping phase 1 when the robber is at $v$ and consider the rest of this move as a new step of duration $t$. But that step will follow the strategy from stage two, which we describe next.

In the second stage, the regular cops consider their positions in $G$ after moving for a total length 1 into an adjacent vertex (or staying at the same vertex) following their strategy in $G$. If we know where the robber is going to move to, the strategy tells the cop $C_i$ ($1\le i\le k$) to move from his current vertex $v_i$ to an adjacent vertex $u_i$ (possibly $u_i=v_i$). Their movement is devised in such a way that after they achieve this, the robber will also be at a vertex. This is achieved as follows.  The robber announces that he will start moving from $v$ using the edge $vu$ towards $u$. (If the robber announces that he will stay at $v$, the cops also stay put, except the tail approaches $v$, and thus we may assume the move towards $u$ is real.)  We may also assume that the robber does not go past the vertex during his move by splitting his move in the same way as we did in order to complete stage one. Thus the robber ends up at a point $x$ on the edge $vu$ at distance $t$ form $v$, where $0\le t\le 1$. According to their strategy (using $u$ as the intended move of the robber), each cop $C_i$ will mimic the movement of the robber along $vu$ on the edge $v_iu_i$, so that he ends up at the same distance $t$ from $v_i$. Sooner or later the robber reaches a vertex (or is caught by the tail). If the vertex is $v$, the cops are also back at their former positions, so all that has changed is that the tail is closer.  We now repeat the strategy and we may assume that the robber reached $u$. At the same time the cops reach their destinations $u_i$. This may be in the middle of the $n$th move, but the cops have known the rest of the move of the robber before the move, so they were able to plan their continuation according to their strategy in $G$.

Since the cops win in $G$, there is a time when they catch a robber in $G$, and then they also catch him in $X$ (unless the tail catches the robber before). This shows that $c(X)\le c(G)+1$.
\end{proof}

A discrete version of this theorem for the game on graphs, where each edge of a graph is replaced by a path of length $r$ ($r$ is the same for every edge), was proved by S.~A.~Hosseini in his PhD Thesis \cite[Lemmas 5.7 and 5.8]{SAHosseini_Thesis}.

Let us observe that both bounds of Theorem \ref{thm:metric graph} are tight. Examples where the upper bound is attained are provided by any cop-win graph which is not a tree.

When we allow for general edge-weights, metric graphs generalize the game of cops and robbers. However, it seems that the generalization does not go far from the graph case. For example, if all weights are rational numbers, then we may as well assume the weights are integral, and in that case, $X(G,w)=X(S_w(G))$, where $S_w(G)$ is a subdivision of $G$ in which each edge $e$ of $G$ is replaced by a path of length $w(e)$, and then the edges in $S_w(G)$ are considered of unit length. Nevertheless, metric graphs bring new problems that may be of interest. Let us describe some possible directions.

\subsection*{How many more cops may be needed for capture}

\begin{problem}\label{prb:c<c_0}
  Is there a metric graph $X=X(G,w)$ such that $c(X) < c_0(X)$? If strict inequality is possible, how large can $c_0(X)-c(X)$ be?
\end{problem}

It is possible that the answer to the second question in Problem \ref{prb:c<c_0} is that the difference $c_0(X)-c(X)$ is bounded above by a constant. However, having no answers to the first question, we are not able to make a good conjecture.

We have a partial result.

\begin{proposition}
    For every metric graph $X=X(G,w)$ with rational edge-lengths, we have $c_0(X) \le c(X)+1$.
\end{proposition}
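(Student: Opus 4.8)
The plan is to reduce the rational-weight metric graph to an integral one and then invoke Theorem~\ref{thm:metric graph}. First I would observe that scaling all edge-weights by a common positive constant does not change the game in any essential way: it merely rescales the metric, the agility function, and the value of the game uniformly, so both $c(X)$ and $c_0(X)$ are invariant under such scaling. Since the weights $w(e)$ are rational, I can clear denominators by multiplying by the least common multiple of all the denominators, obtaining an integer-weighted metric graph $X(G,w')$ with $c(X)=c(X(G,w'))$ and $c_0(X)=c_0(X(G,w'))$. As noted in the paragraph following Theorem~\ref{thm:metric graph}, an integer-weighted metric graph is isometric to the unit-weight metric graph of a subdivision: $X(G,w')=X(S_{w'}(G))$, where $S_{w'}(G)$ replaces each edge $e$ by a path of length $w'(e)$.

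Next I would apply Theorem~\ref{thm:metric graph} to the unit-weight metric graph $X(H)$ where $H=S_{w'}(G)$ is the subdivided abstract graph. That theorem gives
\[
  c_0(X(H)) \le c(H)+1 \le c(X(H))+1,
\]
using the chain $c(H)\le c(X(H))$ from the same theorem. Combining this with the scaling invariance yields
\[
  c_0(X) = c_0(X(H)) \le c(X(H))+1 = c(X)+1,
\]
which is exactly the claimed inequality.

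The only genuinely delicate point is justifying the scaling invariance rigorously: I would argue that a bijective rescaling $d\mapsto\lambda d$ of the metric, together with the matching rescaling of the agility function $\tau\mapsto\lambda\tau$, sets up a one-to-one correspondence between gameplays on $X$ and on the scaled space that preserves the outcome (capture versus escape) and multiplies the value of the game by $\lambda$. Since the definitions of $c(X)$ and $c_0(X)$ depend only on \emph{whether} the value is zero (respectively, whether capture occurs), not on the numerical value, this correspondence shows the two invariants are unchanged. This step is conceptually routine but must be stated carefully because the agility function is part of the robber's strategy; the main obstacle, such as it is, lies in verifying that the robber's freedom to choose an arbitrary decreasing agility is faithfully transported across the rescaling, which follows because $\sum_n \lambda\tau(n)=\infty$ iff $\sum_n \tau(n)=\infty$.
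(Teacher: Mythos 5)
Your proposal is correct and follows essentially the same route as the paper: reduce to integral weights (the paper leaves the scaling step implicit, citing the earlier remark that rational weights may be assumed integral), identify $X(G,w')$ with the unit-weight metric graph of the subdivision $S_{w'}(G)$, and then read off $c_0(X)\le c(S_{w'}(G))+1\le c(X)+1$ from the chain of inequalities in Theorem~\ref{thm:metric graph}. Your explicit justification of the scaling invariance is a welcome addition but does not change the substance of the argument.
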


\begin{proof}
  Let $S=S_w(G)$. Then Theorem \ref{thm:metric graph} shows that $k\le c(X)\le c_0(X)\le k+1$, where $k=c(S)$. This implies that $c_0(X) \le c(X)+1$.
\end{proof}

\subsection*{Meyniel conjecture for metric graphs}

One of the outstanding open problems about the cop number of graphs is the Meyniel Conjecture (see \cite{BaBo12}), which asserts that there is a constant $c$ such that the cop number of any $n$-vertex graph is at most $c\sqrt{n}$. Here we propose a generalization.

\begin{conjecture}\label{conj:metric Meyniel}
  The cop number of any metric graph $X(G,w)$ with $n=|V(G)|$ vertices is $O(\sqrt{n}\,)$. In other words, there is a constant $\alpha$ such that
  $$
     c_0(X(G,w)) \le  \alpha\sqrt{n}.
  $$
\end{conjecture}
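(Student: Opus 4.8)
The plan is to attack the bound directly on the metric graph, extracting $\sqrt{n}$ from the $n$ branch vertices of $G$ rather than from any vertex count of a subdivision. First I would reduce to integer edge-lengths: by Corollary~\ref{cor:epsilon-approaching suffices} it suffices to win every $\varepsilon$-approaching game, and since the value of the game varies continuously with the edge-lengths and rational weights are dense, the general bound follows once it is established for integer $w$. For integer weights, Theorem~\ref{thm:metric graph} applied to the subdivision $S=S_w(G)$ gives $c_0(X(G,w)) = c_0(X(S)) \le c(S)+1$, so the problem becomes: bound $c(S)$ by $O(\sqrt{n}\,)$ with $n=|V(G)|$, \emph{uniformly over all subdivisions}, i.e.\ independently of how many subdivision vertices $S$ contains.

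The crux of the whole argument is that subdivision must not inflate the cop number in terms of the original vertex count. The naive route---applying a classical Meyniel-type bound to $S$---fails outright, since $|V(S)|$ can be made arbitrarily large and would only yield $O(\sqrt{|V(S)|}\,)$. Instead I would exploit that only the $n$ branch vertices of $S$ (the images of $V(G)$) carry combinatorial structure, while every subdivided edge is an isometric path. A single cop can, in the manner of Aigner--Fromme, eventually guard any prescribed isometric path so that the robber can never step onto it; since each edge of $G$ becomes such a path, guarding controls all of the robber's transit between branch vertices. I would then try to build a two-layer strategy in which $O(\sqrt{n}\,)$ cops shadow the branch-vertex structure and a bounded pool of path-guarding cops seals off edges, arranging that the relevant cardinality driving the bound is $n$ and not $|V(S)|$. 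The continuous setting is actually an asset here: a guarding cop moving at unit speed can reposition \emph{along} its guarded path without ever relinquishing the guard.

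To obtain the $\sqrt{n}$ scaling itself I would import the probabilistic and entropy-type techniques used for upper bounds on the discrete cop number, but run them on the branch-vertex structure: partition the play into phases, in each phase place cops according to a carefully chosen distribution so that with constant probability a positive fraction of the robber's branch-to-branch transits are cut, and argue that $O(\sqrt{n}\,)$ cops shrink the robber's reachable region by a constant factor per phase. Derandomizing (or simply running the cops' strategy in expectation and then fixing a good outcome) would convert this into a deterministic winning strategy.

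The main obstacle is unavoidable and I would state it plainly. Taking all weights equal to $1$, Theorem~\ref{thm:metric graph} already gives $c_0(X(G)) \ge c(G)$, so \emph{any} unconditional $O(\sqrt{n}\,)$ bound here immediately proves the classical Meyniel conjecture, for which the best known unconditional upper bound is still only $n^{1-o(1)}$. Consequently the hard step is the third one---producing any genuine $\sqrt{n}$ bound---and I do not expect the probabilistic machinery to deliver it without a breakthrough on the discrete problem. The honest contribution of the metric reformulation is therefore the second step: isolating the \emph{new} difficulty (uniformity over arbitrarily fine subdivisions) and showing, via isometric-path guarding on branch vertices, that it does not increase the order of the cop number. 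That step I expect to be provable and to pin down exactly how far beyond the discrete Meyniel conjecture the metric statement reaches.
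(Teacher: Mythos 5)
There is no proof to compare against: the statement you were given is Conjecture~\ref{conj:metric Meyniel} of the paper, proposed there as an open generalization of Meyniel's conjecture, and the paper offers no argument for it. Your proposal, to its credit, diagnoses this correctly in its final paragraph, but that diagnosis means the proposal is a research plan rather than a proof, and the gap is precisely where you locate it: step three. Since $c_0(X(G))\ge c(G)$ by Theorem~\ref{thm:metric graph} (take $w\equiv 1$), any unconditional $O(\sqrt{n}\,)$ bound here implies the classical Meyniel conjecture, for which the best known upper bound is only $n/2^{(1-o(1))\sqrt{\log_2 n}}=n^{1-o(1)}$. The ``probabilistic and entropy-type techniques'' you invoke for the $\sqrt{n}$ scaling do not exist in a form that delivers this even in the discrete case, so no amount of care in the reduction steps can close the argument; the proposal reduces an open conjecture to an open problem that is at least as hard.

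Two secondary points in the reductions also need repair if the plan is ever executed. First, the passage from real to rational weights via ``continuity of the game value in the edge-lengths'' is unjustified for $c_0$: the conjecture is about \emph{catching} the robber, and exact capture is not a continuous (or even closed) condition under limits of strategies --- the paper's own Problem~\ref{prb:c<c_0} about the possible gap between $c(X)$ and $c_0(X)$ shows how delicate this distinction is. For rational weights the paper's observation $X(G,w)=X(S_w(G))$ together with Theorem~\ref{thm:metric graph} does give $c_0(X(G,w))\le c(S_w(G))+1$ (note: an inequality, not the equality $c_0(X(G,w))=c_0(X(S))$ you wrote), but extending this to irrational weights requires a genuine compactness argument, not density alone. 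Second, your step two --- that $c(S_w(G))$ should be bounded in terms of $n=|V(G)|$ uniformly over all subdivisions --- is a plausible and interesting intermediate claim (for \emph{uniform} subdivisions this is essentially Hosseini's result cited after Theorem~\ref{thm:metric graph}), but for non-uniform subdivisions it is itself unproved, so even the part of the proposal you expect to be ``provable'' is currently an assertion, sketched via isometric-path guarding (Lemma~\ref{lem:isometric path}) without an actual accounting of how $O(\sqrt{n}\,)$ cops coordinate the two layers.
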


\subsection*{Graph minors}

Let us consider a metric graph $X=X(G)$ or $X=X(G,w)$. If we let one edge-weight to be increased to a very large value, this has the same effect on the cop number as deleting this edge (if the edge is not a cutedge and there are at least 2 cops). On the other hand, lowering the weight of the edge to 0 is the same as contracting the edge. Thus all possible weightings describe somewhat more general set than all minors of the graph $G$.

We believe that considering the following question would be worthwhile.

\begin{problem}
  Let\/ $G$ be a graph. What is the maximum of $c(X(G,w))$ taken over all metric graphs based on the graph $G$?
\end{problem}

The set of all metric graphs $X(G,w)$ with all weights in $[0,1]$ is a polytope. Can we describe where the maximal values of $c(X(G,w))$ occur? In particular, the following is a problem that asks about the same as Conjecture \ref{conj:metric Meyniel}.

\begin{problem}
  What is the maximum of $c(X(K_n,w))$ taken over all metric graphs based on the complete graph $K_n$?
\end{problem}

\section{Tools}

The basic strategy of cops, the isometric path lemma of Aigner and Fromme \cite{AiFr84}, works in our setting as well.

\begin{lemma}\label{lem:isometric path}
Let $I$ be an isometric path in $X$. Then one cop can guard $I$. More precisely, after the cop reaches $I$ and spends time equal to the length of $I$ on the path to adjust himself, whenever the robber will steps on $I$ or cross it, he will be caught by the cop in the same step.
\end{lemma}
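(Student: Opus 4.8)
The plan is to adapt the classical Aigner--Fromme shadow argument to the geodesic setting, where distances along the isometric path are measured by arc length. Parametrize the isometric path $I$ by arc length, so that $I$ is the image of an isometric map $\gamma:[0,L]\to X$ with $L=\ell(I)$, and for $s,s'\in[0,L]$ we have $d(\gamma(s),\gamma(s'))=|s-s'|$. The key quantity to track is the \emph{projection} of the robber onto $I$: for the robber at position $r$, let $p(r)=\gamma(\sigma)$ be a point of $I$ minimizing $d(r,\gamma(\sigma))$, with $\sigma\in[0,L]$ its arc-length coordinate. The cop's goal is to maintain a dominating position, namely to occupy the point $\gamma(\sigma)$ (or to lie ``ahead of'' the robber's projection in the appropriate sense) so that any attempt by the robber to step onto or across $I$ results in immediate capture.

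First I would establish the \textbf{one-Lipschitz property of the projection coordinate}. Because $I$ is isometric, if the robber moves from $r$ to $r'$ with $d(r,r')\le\tau$, then the arc-length coordinate of its projection changes by at most $\tau$; that is, $|\sigma'-\sigma|\le d(r,r')\le\tau$. This is the crux: it holds precisely because $I$ is isometric, so that $|d(r,\gamma(s))-d(r,\gamma(s'))|$ comparisons together with the triangle inequality force the nearest-point parameter to move no faster than the robber. I expect this Lipschitz estimate to be the \textbf{main obstacle}, since in a general geodesic space the nearest-point projection onto $I$ need not be unique and need not vary continuously; I would handle non-uniqueness by letting the cop target any choice of nearest point and showing the cop can always decrease the gap to some valid target coordinate, and I would argue the estimate using only the isometry of $I$ rather than any smoothness.

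Next comes the \textbf{two-phase cop strategy}. In the reaching phase the cop simply walks to $I$ along a shortest path; in the adjusting phase, lasting time at most $L$ (the length of $I$), the cop moves along $I$ to bring his own arc-length coordinate into agreement with the robber's projection coordinate $\sigma$. Since the cop travels along $I$ at unit speed and his coordinate can change by the full step length $\tau$ while the robber's projection coordinate changes by at most $\tau$, the cop can close any initial coordinate gap of size at most $L$ within total time $L$, after which he maintains coordinate equality (he matches the robber's projection move-for-move, using any slack to correct). The assertion that time $L$ suffices follows because the initial gap between the cop's coordinate and $\sigma$ is at most $L=\ell(I)$, and each step the cop gains on the gap whenever it is positive.

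Finally I would verify the \textbf{capture condition}. Once the cop's coordinate equals the robber's projection coordinate, suppose in some step the robber moves to a point $r'$ that lies on $I$ or crosses $I$. If $r'\in I$ then $r'=\gamma(\sigma')$ and $p(r')=r'$, so $d(\gamma(\sigma),r')$ before the cop's move is at most the distance the cop needs to travel, namely $|\sigma'-\sigma|\le\tau$ by the Lipschitz bound applied to the robber's step; hence the cop, moving along $I$ by at most $\tau$, reaches $\gamma(\sigma')=r'$ in the same step and captures the robber. The crossing case reduces to this by an intermediate-value argument: a path from one side of $I$ to the other must meet $I$, and the cop, synchronized with the projection, is present at the crossing point. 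I would phrase the conclusion as: after the cop has spent time $L$ adjusting, the robber is caught in the same step in which he touches or crosses $I$, which is exactly the claim of Lemma~\ref{lem:isometric path}.
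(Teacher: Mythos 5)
There is a genuine gap, and it sits exactly where you predicted: the $1$-Lipschitz property of the nearest-point projection coordinate is \emph{false} in a general geodesic space, and your proposed workaround (letting the cop target ``any choice of nearest point'') does not repair it. Concretely, take $X$ to be the metric graph of a $4$-cycle with unit edges and vertices $a,u,b,v$ in cyclic order, and let $I$ be the path $a\,u\,b$ of length $L=2$; this is isometric since $d(a,b)=2$. For the point $v$ one has $d(v,a)=d(v,b)=1<d(v,u)=2$, so the set of nearest points of $v$ on $I$ is $\{a,b\}$, with arc-length coordinates $0$ and $2$. A robber hovering near $v$ can make his nearest-point coordinate jump by $L$ while moving an arbitrarily short distance, so no cop tracking a nearest point can keep up; worse, if the cop has committed to the valid target $b$ and the robber steps from $v$ onto $a$ (a move of length $1$), the cop must travel distance $2$ along $I$ and fails to capture. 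So the capture step of your argument breaks down, not merely the continuity of the projection. (Uniqueness and $1$-Lipschitz continuity of nearest-point projection onto a geodesic hold in CAT(0) spaces, but the lemma is stated for arbitrary geodesic spaces.)

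The fix, which is what the paper does, is to abandon the nearest-point projection and define the shadow by distance to one fixed endpoint: with $I$ parametrized by arc length from its end $a$, set $\sigma(r)$ to be the point of $I$ at arc length $\min\{d(r,a),L\}$ from $a$. This map is $1$-Lipschitz unconditionally, because $|d(r,a)-d(r',a)|\le d(r,r')$ by the triangle inequality and arc-length differences along $I$ equal distances in $X$ precisely because $I$ is isometric; moreover $\sigma$ fixes $I$ pointwise, again by isometry of $I$. With this shadow your two-phase strategy and your capture argument go through essentially verbatim: the cop closes the initial coordinate gap (at most $L$) in time $L$, and if the robber's move from $r_0$ to $r$ passes through $r_1\in I$, then $d(\sigma(r_0),r_1)=d(\sigma(r_0),\sigma(r_1))\le d(r_0,r_1)$, so the cop can reach $r_1$ along $I$ and then follow the robber's own trajectory to $r$ within the same step. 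Everything else in your write-up (the reaching phase, the time-$L$ adjustment bound, the intermediate-value treatment of crossings) is fine; only the choice of shadow needs to change.
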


\begin{proof}
The proof is essentially the same as in the case of geodesic paths in graphs. Let $a,b$ be the ends of $I$, and let $L$ be the length of $I$. Then we define, for each point $r\in X$, its \emph{shadow} $\sigma(r)\in I$ as follows. If $d(r,a) \ge L$, then we set $\sigma(r)=b$. Otherwise, we let $\sigma(r)$ be the point on $I$ whose distance from $a$ is equal to $d(r,a)$. Initially, the cop moves to $a$ and then progresses towards $b$ until he reaches the shadow of the robber. From that point on, he stays at the shadow all the time. This can be maintained since for any $r,r'\in X$, $d(\sigma(r),\sigma(r')) \le d(r,r')$. This strategy works well in the continuous and in the discrete version. In our setting, if the robber moves from $r_0$ to $r$, and his path contains a point $r_1\in I$, then $d(r_0,r_1) \ge d(\sigma(r),r_1)$. Thus, after the move of the robber is complete, the cop can move from $\sigma(r_0)$ to $r_1$ on $I$ and then follow the path of the robber from $r_1$ to $r$, capturing him.
\end{proof}

This result has been generalized to graph retracts in the graph case. In the case of geodesic spaces, we can generalize it to the following notion adapted to metric geometry.

A map $\sigma: X\to X$ is a \emph{$1$-Lipschitz function} if for every $x,y\in X$,
$$
   d(\sigma(x),\sigma(y)) \le d(x,y).
$$
Having such a function, the image $\sigma(x)$ of $x$ will be referred to as the \emph{$\sigma$-shadow} of $x$ (or simply a \emph{shadow} of $x$ if $\sigma$ is clear from the context). Note that every $1$-Lipschitz function is continuous, thus $Y=\sigma(X)$ is a connected geodesic space. Suppose now that $k$ cops can catch the robber in $Y$.  A strategy to catch the robber in $Y$ can be used to \emph{catch the shadow} of the robber when playing the game on $X$. This means the following gameplay. Let $r\in X$ be the position of the robber. If there is a cop whose position is $\sigma(r)$, then there is nothing to do, as we already have a cop at the shadow of $r$. Having achieved this, the shadow has been caught. If not, then we first bring $k$ cops into $Y$ and then we consider the position $\sigma(r)$ as being a robber in $Y$ and use the cops to catch that robber. While $r$ is moving in $X$, $\sigma(r)$ is moving in $Y$. Since $\sigma$ is $1$-Lipschitz, any move of the shadow is consistent with the rules of the game in $Y$, and by our assumption, the cops can catch the shadow.

Once the cops catch the shadow of $r$ in $Y$, the cop at the shadow can follow every move of the robber and stay in the shadow of $r$ indefinitely. Let us now consider the set of fixed points of $\sigma$:
$$
    \Phi(\sigma) := \{ x\in X \mid \sigma(x)=x \}.
$$
If a cop follows the shadow, then whenever the robber passes through a point in $\Phi(\sigma)$, the cop in the shadow will catch the robber. (This is true also if the robber just passes through such point in the interior point of a geodesic defining his current move.) Thus we say that the cop following the shadow can \emph{guard} the set $\Phi(\sigma)$.

For the above setup of catching the shadow, more than one cop may be used, but once the shadow has been caught (or approached within distance $\varepsilon$), just one cop can remain in the shadow (or stay within distance $\varepsilon$), the rest of them can be released.

Note that guarding an isometric path (Lemma \ref{lem:isometric path}) is a special case of guarding the fixed-point set of a $1$-Lipschitz function.

Let us look at some additional examples of 1-Lipschitz functions that can be used to guard a subset $Y$ of a game space.

(a) Suppose that $X\subset \RR^n$ and that $H$ is a hyperplane (of any codimension) in $\RR^n$. Let $\sigma(x)=\proj_H(x)$ be the orthogonal projection onto $H$. If for each $x\in X$, the projection $\sigma(x)$ is also in $X$, then $\sigma$ is 1-Lipschitz and $\Phi(\sigma)=X\cap H$. This shows that we can guard the intersection of the hyperplane with $X$ with one cop as long as we can catch the shadow in $X\cap H$.

(b) More generally, suppose that $X\subseteq Y\times Z$ with $Y_0 := Y\times \{z_0\} \subseteq X$ for some $z_0$ and that the metric in the product satisfies: $d_X((y,z),(y',z')) \ge d_Y(y,y')$ and that $Y_0$ is isometric with $Y$. Then the projection to $Y_0$ is 1-Lipschitz and one cop can guard $Y_0$.

(c) Any isometry $\sigma:X\to X$ is 1-Lipschitz. So we can guard the fixed-point-set of the isometry as long as we can catch the shadow. The task of catching the shadow on $X$ in this case is equivalent to catching the robber. But there are some game spaces with a rich group of isometries, and catching the shadow of some of these isometries may be easier than catching the robber.

Let us observe that the same shadow strategy applies in the $\varepsilon$-approaching games, where the goal is to approach the robber to within distance $\varepsilon$ from the optimal value $\ValC$. In this paper, we will only use it for the case when $\ValC=0$, thus we will talk about being at distance $\varepsilon$ from the robber but the setup applies also for the case when $\ValC>0$. For the $2\varepsilon$-approaching game on $X$, we can use $\varepsilon$-approaching strategy on $Y$ and with this we can \emph{$2\varepsilon$-guard} the set
$$
    \Phi_\varepsilon(\sigma) := \{ x\in X \mid d(x,\sigma(x))\le \varepsilon \}.
$$
Again, $2\varepsilon$-guarding the set means that whenever the robber steps on this set, there will be a cop at distance at most $2\varepsilon$ from him, thus achieving the goal in the $2\varepsilon$-approaching game.

Suppose that $\sigma$ is a 1-Lipschitz mapping that is of bounded order on a subset $Z\subseteq Y$, where $Y=\sigma(X)$. More precisely, let us assume that for each $z\in Z$ there is an integer $m$, $1\le m\le p$, such that $\sigma^m(z) = z$. Suppose that we have $p+q-1$ cops $C_1,\dots,C_{p+q-1}$ and that we are able to catch the robber in $Y$ with $q$ cops. Then we can do the following:

1) Catch the shadow of the robber and use one cop (say $C_1$) to stay in the shadow.

2) Next, we catch the shadow of $C_1$ using $q$ cops different from $C_1$, while $C_1$ follows the robber. Then we keep another cop, say $C_2$, in the shadow of $C_1$.

3) Continue this process with the remaining cops so that at the end, the cop $C_{i+1}$ is in the shadow of $C_i$ for $i=1,\dots,p-1$.

After we reach this situation, whenever the robber steps in $Z$, he will be caught because if his position is $z$, then we have cops at positions $\sigma^m(z)$ for $m=1,2\dots,p$, and one of these points is equal to $z$. Thus, we can guard $Z$.

\section{Geodesic surfaces and their boundary}

In the rest of the paper we will develop bounds on cop numbers for compact geodesic surfaces.
Topological classification of surfaces tells us that every compact surface is homeomorphic to a closed surface $S_{g,k}$ or $N_{g,k}$ of finite genus $g\ge0$ (either orientable or nonorientable, respectively) from which a finite collection of $k\ge0$ open disks (whose closures are pairwise disjoint disks) is removed.

When dealing with the game on geodesic surfaces, we will use Theorem \ref{thm:triangulate surface epsilon} to triangulate the surface and will therefore need to assume that the boundary is piecewise geodesic. In order to do that, we will first replace the surface $X$ with a different surface $X'$, whose boundary will be piecewise geodesic. Formally, this will be achieved by the following results.

\begin{theorem}\label{thm:obtaining geodesic boundary}
  Let $X$ be a compact geodesic surface and $\varepsilon>0$. Then $X$ contains a geodesic surface $X'\subseteq X$ that is homeomorphic to $X$ and has the following properties:
  \begin{itemize}
    \item[(i)] $X'$ is isometric in $X$.
    \item[(ii)] $X'$ has piecewise geodesic boundary.
    \item[(iii)] There is a $1$-Lipschitz mapping $\sigma: X\to X'$ that is identity on $X'$, and for each $x\in X\setminus X'$, we have $\sigma(x)\in \partial X'$ and $d(x,\sigma(x))<\varepsilon$.
  \end{itemize}
\end{theorem}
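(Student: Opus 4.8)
The plan is to build $X'$ by cutting off, along each boundary component, a thin layer that straightens \emph{only} the parts of $\partial X$ that are not already geodesic, and then to take $\sigma$ to be the nearest-point projection onto $X'$. The starting observation is that a boundary arc fails to be locally isometric exactly where $X$ is locally convex from the inside (the surface lies on the concave side of the arc): along such a stretch a shortest path between two boundary points cuts strictly into the interior, whereas along the complementary ``concave'' stretches there is no interior shortcut, so there the boundary is itself a geodesic. Thus I only have to replace the locally convex portions, and the already-geodesic portions can be kept verbatim.

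Concretely, I would fix $\delta>0$ much smaller than the boundary separation and than the scale below which two nearby points of a boundary component are joined by a unique short geodesic staying near that component. On each boundary component $B$ choose cyclically ordered points $p_0,p_1,\dots,p_m=p_0$ with consecutive ones at distance at most $\delta$, and replace each arc $p_ip_{i+1}$ of $B$ by the geodesic segment $[p_i,p_{i+1}]$. On the concave (already geodesic) stretches these segments coincide with $B$, while on the convex stretches each one cuts off a small ``cap.'' Let $B'$ be the resulting closed curve and let $X'$ be $X$ with the interiors of all caps removed. For $\delta$ small each $B'$ is a simple closed curve that is $\varepsilon$-close to $B$, the caps are pairwise disjoint and of diameter $<\varepsilon$, so $X'$ is homeomorphic to $X$ and $\partial X'$ is piecewise geodesic; this gives (ii).

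For (i) I would show that no shortest path of $X$ joining two points of $X'$ enters a cap: if such a path crossed the bounding chord $c=[p_i,p_{i+1}]$ at $a$ and then at $b$, then, since $c$ is isometric, the subarc of $c$ from $a$ to $b$ is a shortest $a$--$b$ path lying on $\partial X'\subseteq X'$, so the path can be rerouted along $c$ without increasing its length. Hence $d_{X'}=d_X$ on $X'$, i.e. $X'$ is convex and so isometric in $X$. For (iii) I would take $\sigma$ to be the nearest-point projection onto the closed convex set $X'$; it is the identity on $X'$, and for a point $x$ of a cap the thinness of the caps forces its footpoint onto the chord $c\subseteq\partial X'$, with $d(x,\sigma(x))<\varepsilon$. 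The one remaining assertion, and the real one, is that $\sigma$ is $1$-Lipschitz.

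The hard part is exactly this last step. Nearest-point projection onto a convex set is non-expanding in spaces whose curvature is bounded above, but it can be \emph{expanding} in the presence of positive curvature, so it cannot be invoked as a black box on a general geodesic surface. I would reduce the claim to the single inequality $d(\sigma(x),y)\le d(x,y)$ for $x$ in a cap and $y\in X'$, and prove it by a first-variation argument: convexity of $X'$ forces the geodesic $[x,\sigma(x)]$ to meet $\partial X'$ in a non-acute angle, from which the inequality follows by comparison at the small scale of a single cap. The genuine difficulty is to make this angle-and-comparison estimate uniform at footpoints where the surface need not be smooth and its curvature need not be bounded above (recall that the set of corners can even be uncountable); arranging the boundary carefully so that a $1$-Lipschitz projection does exist is precisely the technical content that I would isolate as Theorem~\ref{thm:make boundary piecewise geodesic} in the appendix and invoke here.
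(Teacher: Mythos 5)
Your construction of $X'$ and your arguments for (i) and (ii) follow essentially the same route as the paper: chop off thin caps bounded by short geodesic chords between nearby boundary points, and observe that a shortest path entering a cap can be rerouted along the chord, so $X'$ is isometric in $X$. (The paper is somewhat more careful here -- a short $(p_i,p_{i+1})$-geodesic may meet $B$ outside the arc $p_ip_{i+1}$, and neighbouring caps may overlap in complicated ways, which is why the appendix introduces the leftmost/rightmost points $w_j,z_j$, the degenerate disks $D_j$, and the merging step that yields property (4) -- but these are repairable details, not a conceptual divergence.)

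The genuine gap is in (iii). You propose to take $\sigma$ to be the nearest-point projection onto $X'$ and to prove it is $1$-Lipschitz by a first-variation angle estimate plus comparison; you correctly observe that this needs an upper curvature bound, which the theorem does not assume, and you then defer the difficulty to Theorem~\ref{thm:make boundary piecewise geodesic}. But that appendix theorem does not contain this content: it only establishes (i), (ii), and the weak statement that every point of $X\setminus X'$ lies within $\varepsilon$ of $X'$; it says nothing about any Lipschitz retraction. So the hard step is pushed onto a result that does not deliver it, and the route itself is unsound on a general geodesic surface, where concentrated positive curvature (e.g.\ cone points of angle less than $2\pi$ arbitrarily close to a cap) makes nearest-point projection onto a convex set expanding. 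The paper avoids the issue entirely by not using metric projection: for $x$ in the cap $D_j$ it sets $\sigma_j(x)$ to be the point of the bounding geodesic $\gamma_j$ at arc-length $\min\{d(w_j,x),\ell(\gamma_j)\}$ from its endpoint $w_j$ -- the same ``shadow on an isometric path'' device as in Lemma~\ref{lem:isometric path}. This is $1$-Lipschitz by the triangle inequality alone, with no curvature hypothesis, and the only remaining work is to truncate the $\sigma_j$ at the intersection points $x'_j,x'_{j+1}$ of consecutive geodesics so that the pieces glue continuously. Replacing your projection by this arc-length shadow map is the missing idea.
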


\begin{proof}
  The topological part of the proof can be found in the appendix, see Theorem \ref{thm:make boundary piecewise geodesic}. There it is shown that we may restrict our attention separately to each boundary component $B$ and it is shown that there is a finite set of points $w_1<w_2<\cdots <w_s < w_1$ and $z_1 < z_2 <\cdots <z_s < z_1$ on $B$ (where $<$ denotes the cyclic order around $B$) and there are $(w_j,z_j)$-geodesics $\gamma_j$ ($1\le j\le s$) with the following properties (for every $1\le j\le s$), where all indices are considered modulo $s$:

\begin{itemize}
    \item[(1)] The pairs $(w_j,z_j)$ and $(w_{j+1},z_{j+1})$ interlace on $B$, i.e.\ $w_j < w_{j+1} \le z_j < z_{j+1}$, and the union of all segments $B[w_j,z_j]$ covers $B$.
    \item[(2)] The $(w_j,z_j)$-geodesic $\gamma_j$ bounds a (possibly degenerate) disk $D_j$ together with $B[w_j,z_j]$, $\gamma_j$ is the unique geodesic from $w_j$ to $z_j$ contained in $D_j$, and $\gamma_j[w_j,z_j]\cap B \subseteq B[w_j,z_j]$.
    \item[(3)] The length of $\gamma_j$ is less than $r$, $\ell(\gamma_j)<r$, and the diameter of $D_j$ is smaller than $\varepsilon$. (Here the constant $r$ satisfies $0<r<\varepsilon/4$ is such that the diameter of each $D_j$ is less than $\varepsilon$.)
    \item[(4)] $\gamma_j$ intersects $\gamma_{j-1}$ and $\gamma_{j+1}$, but is disjoint from all other $\gamma_m$, $m\notin \{j-1,j,j+1\}$.
    \item[(5)] Let $x'_j$ be the first point on $\gamma_{j-1}$ that belongs to $\gamma_{j-1}\cap \gamma_j$, when $\gamma_{j-1}$ is traversed from $w_{j-1}$ towards $z_{j-1}$. Then the union $\cup_{j=1}^s \gamma_j[x'_j,x'_{j+1}]$ forms a simple closed curve $B'$ in $X$ that is homotopic to $B$.
\end{itemize}

The surface $X'$ is obtained from $X$ by removing (for each boundary component $B$) the (degenerate) cylinder between $B$ and $B'$ (while keeping points in $B'$). The above construction defines $X'$ and the proof of Theorem \ref{thm:make boundary piecewise geodesic} verifies properties (i) and (ii). It remains to prove (iii).

  \begin{figure}
  \centering
  \includegraphics[width=16cm]{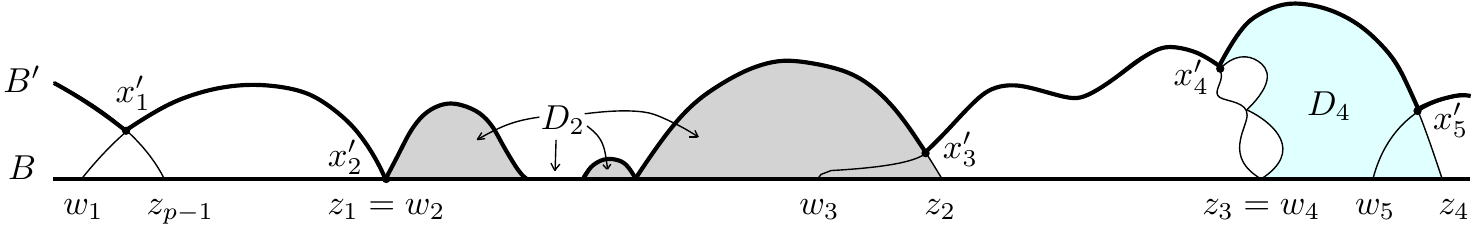}
  \caption{Some degenerate situations when defining $B'$.}\label{fig:defining B'}
  \end{figure}

Some degenerate situations of the above are shown in Figure \ref{fig:defining B'}: The disk $D_j$ can be ``degenerate'' as is the case with $D_2$ in the figure; the consecutive geodesics $\gamma_j$ and $\gamma_{j+1}$ can intersect more than once; however the latter case is restricted to look like the case of $\gamma_3$ and $\gamma_4$ in Figure \ref{fig:defining B'} because of property (2) which implies that $D_j$ cannot be made smaller. If that happens, we replace $\gamma_j$ with $\gamma_j[w_j,x'_{j+1}] \cup \overline{\gamma}_{j+1}[x'_{j+1},w_{j+1}]$, where $\overline{\gamma}_{j+1}$ denotes the path that is reverse to $\gamma_{j+1}$. In that case we also change $D_j$ correspondingly.

Since each $\gamma_j$ is a geodesic, the following defines a 1-Lipschitz mapping $\sigma_j:D_j\to X'$:
$$
   \sigma_j(x) = \left\{
                       \begin{array}{ll}
                         z_j, & \hbox{if $d(w_j,x)\ge d(w_j,z_j)$;} \\
                         \gamma_j(t), & \hbox{where $d(w_j,x)=d(w_j,\gamma_j(t))$, otherwise.}
                       \end{array}
                     \right.
$$
Note that for each $x\in X\setminus X'$, there is $j\in [s]$ such that $x$ belongs to $D_j$ and possibly also to $D_{j+1}$, but not to any other $D_i$, $i\in [s]\setminus\{j,j+1\}$. In that case we define
$$
   \sigma(x) = \left\{
                       \begin{array}{ll}
                         \sigma_j(x), & \hbox{if $\sigma_j(x)\in \gamma_j[x'_j,x'_{j+1}]$;} \\
                         x'_j, & \hbox{if $\sigma_j(x)\in \gamma_j[w_j,x'_j]$;} \\
                         x'_{j+1}, & \hbox{if $\sigma_j(x)\in \gamma_j[x'_{j+1},z_j]$.}
                       \end{array}
                     \right.
$$
This defines $\sigma(x)$ for every $x\in X\setminus X'$, and for $x\in X'$, we define $\sigma(x)=x$.
To see that the mapping $\sigma$ defined above is 1-Lipschitz, note first that $\sigma$ restricted to $D_j\setminus D_{j-1}$ is 1-Lipschitz. Also, $\sigma|_{X'} = id_{X'}$ is 1-Lipschitz since $X'$ is isometric in $X$. The only thing to confirm is that it is continuous in the intersection of $D_j\setminus D_{j-1}$ and $D_{j+1}\setminus (int D_j)$.
To see this, consider a point $y\in \gamma_j[x'_{j+1},z_j]$. Then $\sigma(y) = x_{j+1}$. If we are approaching to $y$ from $D_{j+1}\setminus D_j$, the values of $\sigma_{j+1}$ approach $y$ and $\sigma$-values all become equal to $x'_{j+1} = \sigma(y)$ once the points are close enough to $y$. This completes the proof.
\end{proof}

The corollary for the game of cops and robber on surfaces is that for the $\varepsilon$-approaching game on $X$, we can always reduce our attention to the game played on the isometric homeomorphic subsurface $X'$ that has piecewise geodesic boundary.

\begin{corollary}\label{cor:cop number of X'}
  Let $X$, $\varepsilon>0$, and $X'$ be as in Theorem \ref{thm:obtaining geodesic boundary}. Then
  $$c_{2\varepsilon}(X) \le c_\varepsilon(X') \le c_\varepsilon(X).$$
\end{corollary}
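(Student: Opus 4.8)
The plan is to prove the two inequalities separately, using the $1$-Lipschitz map $\sigma:X\to X'$ of Theorem~\ref{thm:obtaining geodesic boundary} for the left inequality and the isometric embedding $X'\subseteq X$ for the right one. The intuition is that $X'$ is an almost-perfect copy of $X$: it differs only in a thin collar of width $<\varepsilon$ near each boundary component, and $\sigma$ retracts that collar onto $\partial X'$ without moving any point more than $\varepsilon$. So a strategy that works on the smaller, nicer surface $X'$ should transfer to $X$ at the cost of an extra $\varepsilon$ in the approach distance, and conversely.

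\medskip
For the right inequality $c_\varepsilon(X')\le c_\varepsilon(X)$, the idea is that $X'$ is isometric in $X$ (property (i)), so $X'$ is a ``sub-game-space'' in which distances are computed the same way as in $X$. I would argue that any winning strategy for the cops in the $\varepsilon$-approaching game on $X$ can be restricted to give one on $X'$. The cleanest route is to have the robber, when confined to $X'$, play exactly as he would on $X$; since $X'$ is isometric, geodesics the robber uses stay available, and the cops' $\varepsilon$-approaching strategy on $X$ already guarantees approach within $\ValC(X)+\varepsilon$ against \emph{all} robber positions, in particular those in $X'$. One must check that the cops' moves can be kept inside $X'$ (or that whatever they do in $X\setminus X'$ can be replaced by its $\sigma$-image without loss), but since the robber never leaves $X'$ and $\sigma$ is $1$-Lipschitz and fixes $X'$, projecting the cops into $X'$ only decreases their distance to the robber. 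Hence $c_\varepsilon(X)$ cops suffice on $X'$.

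\medskip
For the left inequality $c_{2\varepsilon}(X)\le c_\varepsilon(X')$, this is the substantive direction and where I expect the main work. The plan is to take $k=c_\varepsilon(X')$ cops with a strategy that $\varepsilon$-approaches on $X'$, and have them play the \emph{shadow game} driven by $\sigma$, exactly as developed in the Tools section: instead of chasing the robber $r\in X$, the cops chase $\sigma(r)\in X'=\sigma(X)$. Since $\sigma$ is $1$-Lipschitz, every robber move induces a legal move of the shadow in $X'$, so the cops can $\varepsilon$-approach $\sigma(r)$ using their $X'$-strategy. Once a cop is within distance $\varepsilon$ of $\sigma(r)$, the triangle inequality together with property (iii), which gives $d(r,\sigma(r))<\varepsilon$, yields that this cop is within $2\varepsilon$ of $r$. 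This is precisely $2\varepsilon$-guarding the set $\Phi_\varepsilon(\sigma)$, but here property (iii) makes $\Phi_\varepsilon(\sigma)=X$ (every point moves less than $\varepsilon$ under $\sigma$), so the cops $2\varepsilon$-approach the robber everywhere on $X$. Thus $k$ cops win the $2\varepsilon$-approaching game on $X$.

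\medskip
The main obstacle is bookkeeping the two layers of slack in the left inequality: the $\varepsilon$ coming from the cops' near-optimal approach to the shadow, and the $\varepsilon$ coming from $d(r,\sigma(r))<\varepsilon$, which must combine cleanly to $2\varepsilon$ via the triangle inequality rather than, say, $\varepsilon+\varepsilon$ plus some uncontrolled term. I would make sure the $\varepsilon$-approaching game on $X'$ is invoked in its shadow form (as explicitly sanctioned in the Tools section, where $\Phi_\varepsilon(\sigma)$ is $2\varepsilon$-guarded), so that no separate convergence argument is needed. A minor point to verify is that $\ValC=0$ is not actually required for the statement, but since the corollary is phrased entirely in terms of the $\varepsilon$-approaching game and $c_\varepsilon$, the argument goes through verbatim; the Tools section already notes the shadow setup applies for $\ValC>0$ as well.
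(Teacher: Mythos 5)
Your proposal is correct and follows essentially the same route as the paper: the left inequality via the shadow strategy driven by the $1$-Lipschitz map $\sigma$ plus the triangle inequality with $d(r,\sigma(r))\le\varepsilon$, and the right inequality via the fact that $\sigma$ fixes $X'$ and does not increase distances. The only cosmetic difference is that for $c_\varepsilon(X')\le c_\varepsilon(X)$ you transfer the cops' winning strategy from $X$ to $X'$ by projecting the cops, whereas the paper argues the contrapositive by letting the robber's evasion strategy on $X'$ play against the cops' $\sigma$-shadows; both rest on the same inequality $d(C,r)\ge d(\sigma(C),r)$.
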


\begin{proof}
Let $\sigma: X\to X'$ be the 1-Lipschitz mapping from Theorem \ref{thm:obtaining geodesic boundary}.
If $x\in X$ is a position of a player on $X$, then we may consider player's \emph{shadow} $\sigma(x)\in X'$. Since $\sigma$ is 1-Lipschitz, the movements of the shadows can be viewed as the game played in $X'$.

Suppose that $k=c_\varepsilon(X')$ cops are playing against the robber on $X$. The cops can use their winning strategy for $\varepsilon$-approaching game on $X'$ against the shadow $\sigma(r)$ of the robber. When they are at distance at most $\varepsilon$ from $\sigma(r)$ in $X'$, their distance from $r$ in $X$ is at most $2\varepsilon$ because $d(r,\sigma(r))\le\varepsilon$. This shows that $c_{2\varepsilon}(X) \le c_\varepsilon(X')$.

To prove the other inequality, consider the robber using his strategy in $X'$ for staying more than $\varepsilon$ away from the shadows of $k-1 = c_\varepsilon(X')-1$ cops. Since $\sigma$ is 1-Lipschitz, any cop $C$ at any time of the gameplay must be more than $\varepsilon$ away from the robber since $d(C,r) \ge d(\sigma(C),\sigma(r)) = d(\sigma(C),r) > \varepsilon$. This implies that $c_\varepsilon(X) \ge c_\varepsilon(X')$.
\end{proof}

\section{Surfaces of genus 0 and 1}

Aigner and Fromme \cite{AiFr84} proved that the cop number of any planar graph is at most 3. Abrahamsen et al.~\cite{AHRW17} extended their result to arbitrary compact subsets of the plane with rectifiable boundary components. Their version of the game uses the intrinsic metric induced by distances in $\RR^2$ and constant agility functions (with the step length tending to 0). Here we extend their result to arbitrary geodesic spaces that are homeomorphic to a compact subset of the sphere and endowed with arbitrary geodesic intrinsic metric. They are topologically determined by the number $k\ge0$ of boundary components, but the intrinsic metrics making them into geodesic spaces can be very different and complicated. Locally we may have lots of ``hills and valleys'', the curvature can be positive or negative and need not be bounded, the surface itself may not be intrinsically isometrically embeddable in $\RR^3$.

\begin{theorem}\label{thm:genus 0}
  If $X$ is a compact geodesic surface that is topologically homeomorphic to a subset of the $2$-dimensional sphere,
  then $c(X)\le3$.
\end{theorem}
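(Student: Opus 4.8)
The plan is to adapt the classical Aigner–Fromme three-cop strategy for planar graphs to the continuous setting, using the isometric-path lemma (Lemma~\ref{lem:isometric path}) as the basic building block and Corollary~\ref{cor:epsilon-approaching suffices} to reduce everything to the $\varepsilon$-approaching game. By Corollary~\ref{cor:cop number of X'} we may further assume that $X$ has piecewise geodesic boundary, so that Theorem~\ref{thm:triangulate surface epsilon} triangulates $X$ into small convex non-degenerate triangles; this lets us think of the game on a metric complex that closely approximates a planar graph, where each cop guarding a geodesic blocks the robber from crossing it (by Lemma~\ref{lem:isometric path}, the robber is caught the instant he touches a guarded isometric path). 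The overall idea is the standard one: maintain a shrinking region $R$ of $X$ (homeomorphic to a disk) that is guaranteed to contain the robber, where the boundary of $R$ is guarded by cops sitting on isometric paths, and repeatedly subdivide $R$ so that the robber's available territory strictly decreases until it collapses.

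First I would set up the invariant: two cops will always guard the boundary of the current region $R$ containing the robber, while the third (a ``free'' cop, playing the role of the tail from Theorem~\ref{th:with tail never rest}) is used to carve $R$ into smaller pieces. Concretely, since $X$ embeds in the sphere, any region $R$ bounded by guarded isometric paths is a disk, and its boundary can be covered by a bounded number of isometric paths (using that geodesics in $X$ are isometric on subpaths and that $\diam(T_i)\le\varepsilon$ controls how the pieces fit together). The key step is the subdivision move: given the robber confined to a disk $R$ whose boundary is guarded by two cops meeting at two vertices $a,b$, the free cop first reaches and then guards an $(a,b)$-geodesic $I$ cutting $R$ into two subdisks $R_1,R_2$. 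Once $I$ is guarded, the robber is trapped in one of $R_1$ or $R_2$, say $R_1$; then one of the two boundary-guarding cops becomes free, and we recurse on $R_1$, which has strictly smaller ``complexity'' (fewer triangles, or smaller diameter) than $R$. Because the triangulation is locally finite and each $T_i$ has diameter at most $\varepsilon$, after finitely many subdivisions the robber's region shrinks to within $\varepsilon$ of a single point, which wins the $\varepsilon$-approaching game; letting $\varepsilon\to0$ and invoking Corollary~\ref{cor:epsilon-approaching suffices} gives $c(X)\le 3$.

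The main obstacle I anticipate is making the ``cut $R$ by a guarded geodesic'' step rigorous in the continuous metric setting, in two respects. First, I must ensure that a single $(a,b)$-geodesic genuinely separates the robber's region into two pieces and that guarding it (via Lemma~\ref{lem:isometric path}) actually prevents crossing — this requires that the geodesic be isometric, which holds by definition of $(a,b)$-geodesic, but I must also handle the subtlety that the robber could approach within $\varepsilon$ rather than being caught exactly, and that the geodesic's endpoints lie on already-guarded paths so the hand-off of cops is seamless. Second, I must argue the process terminates: in the graph case termination follows from finiteness of the vertex set, but here I would use the finite triangulation together with the fact that each subdivision either removes at least one triangle from $R$ or reduces $R$ to a region of diameter at most $\varepsilon$ (a convex triangle, by property~(i)–(iii) of Theorem~\ref{thm:triangulate surface epsilon}), at which point one cop catches the robber directly. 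A secondary technical point is boundary components: a disk-with-holes is not simply connected, so I would treat the boundary components by first sending cops to guard short geodesics connecting them to the outer boundary (using boundary separation to bound the number of such cuts needed), thereby reducing to the genuinely simply connected case before running the subdivision argument.
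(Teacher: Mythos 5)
Your overall architecture (two cops guarding the boundary of a shrinking robber territory, a third cop cutting it) matches the paper's, and the reductions via Corollary~\ref{cor:epsilon-approaching suffices}, Corollary~\ref{cor:cop number of X'} and Theorem~\ref{thm:triangulate surface epsilon} are correct. But there are genuine gaps. First, the cutting path cannot simply be an $(a,b)$-geodesic of $X$ guarded via Lemma~\ref{lem:isometric path}: a shortest $(a,b)$-path in $X$ need not lie in the robber's territory $R$ at all, so it need not separate $R$, and a path that does separate $R$ is in general only shortest \emph{relative to} $\overline R$, hence not isometric in $X$. You flag this as the main obstacle but do not resolve it; the paper resolves it by requiring the two guarded paths to be geodesic in $\overline R$ and by defining guarding with respect to the intrinsic distance of $\overline R$ (the shadow argument of Lemma~\ref{lem:isometric path} goes through for that relative metric). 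Second, your progress measure is not pinned down: if the new cut has the same endpoints $a,b$ as the two guarded paths, nothing forces it to reduce the number of triangles meeting $R$ (it could hug one of the old paths), and ``smaller diameter'' is not monotone under such cuts. The paper measures progress by the number of edges of $\ET$ meeting $R$ and forces a strict decrease by routing the new cut through a triangulation vertex $x'$ lying inside $R$ adjacent to the old boundary; this is also why its cage needs the short connector geodesics $\gamma(u)$ of length at most $\varepsilon/2$ --- the new endpoints $x',y'$ do not lie on the old boundary, and the $\varepsilon$-approaching framework absorbs the resulting gaps.

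Third, and most seriously, your treatment of the boundary components fails with three cops. Guarding a geodesic from each hole to the outer boundary requires a cop stationed on that cut for the rest of the game (otherwise the robber walks around the hole), and the number of holes is not bounded by any function of the boundary separation; so you cannot reduce to the simply connected case within a budget of three cops. The paper instead builds the holes into the cage itself: the cage boundary is allowed to contain arcs $\beta(x),\beta(y)$ of hole boundaries (which the robber cannot cross), and the taming step expels one hole at a time from the robber's territory as part of the shrinking process. Without some such device your induction does not close.
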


By Corollary \ref{cor:epsilon-approaching suffices}, it suffices to prove the following.

\begin{lemma}
    If $X$ is a compact geodesic surface that is topologically homeomorphic to a subset of the $2$-dimensional sphere,
    then for every $\varepsilon>0$, we have $c_\varepsilon(X)\le3$.
\end{lemma}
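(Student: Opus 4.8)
The plan is to mimic the classical Aigner--Fromme three-cop strategy for planar graphs, using the isometric-path guarding lemma (Lemma~\ref{lem:isometric path}) in place of its discrete analogue. By Corollary~\ref{cor:cop number of X'} it suffices to work on an isometric homeomorphic subsurface $X'$ with piecewise geodesic boundary, so first I would invoke Theorem~\ref{thm:obtaining geodesic boundary} to replace $X$ by such an $X'$, losing only a factor of $2$ in the approaching parameter, which is harmless since we are proving an $\varepsilon$-statement for all $\varepsilon$. From now on I work with a surface homeomorphic to a sphere with $k$ holes.

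The strategy uses three cops but only two of them ``actively'' at a time: the idea is to trap the robber in a region of $X'$ that is bounded by a small number of guarded isometric paths, and then shrink that region. I would set up the following invariant. At each stage the robber is confined to an open region $R$ of $X'$ whose boundary consists of guarded isometric paths together with possibly pieces of $\partial X'$. Two cops sit on two isometric paths $I_1,I_2$ that separate $R$ from the rest of the surface (the robber cannot cross a guarded path, by Lemma~\ref{lem:isometric path}). The third, free cop then walks to an isometric path $I$ that cuts $R$ into two pieces, using the fact that between any two points of a geodesic space there is an $(x,y)$-geodesic, which is isometric; on a sphere-like surface such a separating geodesic arc always exists because the region is planar (simply connected after capping, so Jordan-curve separation applies). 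Once the free cop has spent time equal to the length of $I$ adjusting himself (as the lemma requires), the robber is confined to the smaller of the two pieces, one of the original guarding cops is released, and the process repeats with a strictly smaller region.

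The technical heart, and the step I expect to be the main obstacle, is a compactness/termination argument showing this shrinking terminates within distance $\varepsilon$. In the discrete planar case the region shrinks by at least one vertex each round, giving finiteness; in the continuous setting I must instead argue that the separating arcs can be chosen so that the diameter (or area, or some monotone quantity) of the confinement region decreases by a definite amount, or else that after finitely many cuts the region has diameter at most $\varepsilon$, at which point a single cop standing in it is within $\varepsilon$ of the robber and the $\varepsilon$-approaching game is won. Here I would exploit that $X'$ is compact with piecewise geodesic boundary: one can triangulate $X'$ via Theorem~\ref{thm:triangulate surface epsilon} into convex non-degenerate triangles of diameter at most $\varepsilon$, reducing the continuous confinement to a finite combinatorial process on the dual of this triangulation, where the classical planar bound of three cops applies and finiteness is automatic. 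The subtlety is matching the continuous guarding of isometric paths to the edges of this triangulation so that a cop guarding a union of triangle-edges really does block the robber; convexity and non-degeneracy of the triangles, plus property (iv) of Theorem~\ref{thm:triangulate surface epsilon} that interior corners lie on $\partial X$, are what make each such boundary arc a genuine isometric path that a single cop can guard.

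Finally I would handle the boundary components: since the surface is planar, each of the $k$ holes behaves like an extra ``outer'' face, and the separating isometric arcs may terminate on $\partial X'$ rather than forming closed curves. I would treat a guarded arc with both endpoints on the boundary exactly as in Aigner--Fromme, noting that $\partial X'$ itself need not be guarded because the robber cannot leave $X'$. Assembling these pieces yields $c_\varepsilon(X')\le 3$ for every $\varepsilon>0$, hence $c_{2\varepsilon}(X)\le 3$ for every $\varepsilon$, and therefore $c_\varepsilon(X)\le 3$ for all $\varepsilon>0$, which is the claim.
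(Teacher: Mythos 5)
Your high-level plan is the same as the paper's (confine the robber in a region bounded by two guarded arcs, use the third cop to split it, and use the $\varepsilon$-triangulation of Theorem~\ref{thm:triangulate surface epsilon} to make the shrinking process finite), but two steps you rely on would fail as stated. First, the separating arcs cannot be taken to be isometric paths in $X$, so Lemma~\ref{lem:isometric path} does not apply to them directly: an $(x',y')$-geodesic of $X$ between two points on the boundary of the robber's territory $R$ may leave $\overline R$ entirely, in which case it separates nothing, and a shortest $(x',y')$-path \emph{inside} $\overline R$ is generally not isometric in $X$. The paper resolves this by requiring the two guarded paths $P,Q$ only to be geodesic \emph{in $\overline R$} and by defining the guarding shadow with respect to the intrinsic metric of $\overline R$ (which suffices because the robber is confined there); your proposal never makes this adjustment, and without it the guarding claim for your separating arc $I$ is unjustified. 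Second, the cage cannot be sealed by guarded arcs alone: when the third cop occupies a new arc $P'$ from $x'$ to $y'$ (vertices of the triangulation near the old endpoints $x,y$), the endpoints do not coincide with those of $P$, leaving unguarded junction gaps. The paper's cage explicitly includes up to four unguarded ``short geodesics'' $\gamma(u)$ of length at most $\varepsilon/2$, and the argument that the robber cannot escape through them is exactly where the $\varepsilon$-approaching relaxation is consumed: any point of $\gamma(u)$ is within $\varepsilon/2$ of an endpoint of $P$ or $Q$, so the cop guarding that path is within $\varepsilon$ of a robber who tries to cross. Your write-up treats the region as if its boundary were entirely guarded paths plus $\partial X'$, which is not achievable.

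Beyond these two points, the combinatorial bookkeeping you defer is where most of the remaining work lies: the progress measure is the number of triangulation edges meeting $R$, and making a single shrinking step well defined requires ``taming'' the cage first --- rerouting $P$ when an edge of $\ET$ meets it in many (possibly infinitely many) components, pushing any hole whose boundary contains $\beta(x)$ out of the territory, handling self-touchings of the cage curve, and treating the degenerate case where no triangulation edge leads from $P$ into $R$ (e.g.\ when $P$ runs along the boundary of a hole). None of this is fatal to your outline, but as written the proof has real gaps at the points above rather than being a complete argument.
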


\begin{proof}
  By Corollary \ref{cor:cop number of X'}, we may assume that $X$ has piecewise geodesic boundary.
  Let us now consider the $\varepsilon$-approaching game with 3 cops.
  By Corollary \ref{cor:cop number of X'} we may assume that each boundary component of $X$ is piecewise geodesic.
  Since $X$ is compact, it has only finitely many boundary components. If the boundary is nonempty, let $\delta$ be the maximum distance from a point in $X$ from the boundary of $X$ and let $p_0\in X$ be a point whose distance from the boundary is at least $2\delta/3$. We may assume that $\varepsilon<\delta/3$.
  By Theorem \ref{thm:triangulate surface epsilon}, $X$ is the union of a finite collection $\mathcal T$ of non-overlapping triangles, and each triangle in $\mathcal T$ is convex and non-degenerate, its diameter is less than $\varepsilon/2$, and if a vertex $p$ of one of these triangles is a corner, then $p\in\partial X$. Let us denote by $\VT$ and $\ET$ the set of vertices and edges (respectively) of triangles in $\T$. (We will consider $\VT$ and $\ET$ as the set of vertices and edges but also as subsets of $X$. Thus, if we say that a point $p\in X$ is in $\ET$, we mean it is a point on one of the edges (geodesic curves) in $\ET$.) We may assume that $p_0\notin \VT\cup\ET$ is an interior point of one of the triangles, say $T_0\in\T$. Let $x_0,y_0,w_0$ be the vertices of $T_0$. The above assumptions guarantee that $T_0$ is disjoint from $\partial X$. Now we view $X-p_0$ as a subset of the plane and view $\T$ as a planar graph, whose outer face corresponds to $T_0$. This gives a homeomorphism of $X-p_0$ with the plane with $k$ open disks removed. Note that this is a topological representation of $X$ and that we still consider the geodesics in $X$ defining the distances between the points in $X$.

  Given a closed curve $\gamma$ in $X$ which is piecewise geodesic and is not crossing itself (but may touch itself), we define the interior $\int(\gamma)$ and the exterior $\ext(\gamma)$ of $\gamma$ in the same way as we define it for curves in the plane, except that we take the intersection with $X-p_0$. Thus the holes in the surface are not included in $\int(\gamma)$ ($\ext(\gamma)$), but their boundary or parts of their boundary may be included in $\int(\gamma)$ ($\ext(\gamma)$). And, to clarify, note that $\gamma\cap\int(\gamma)=\emptyset$ and $\gamma\cap\ext(\gamma)=\emptyset$.

  The strategy of three cops chasing the robber in $X$ will involve the following configuration using two geodesic paths in $X$.
  (In fact, a weaker condition -- see the last item in the definition of a cage below -- will be imposed on these two paths.)
  One cop will guard a path $P$ joining a point $x\in\VT\cup\ET$ with a point $y\in\VT\cup\ET$.
  The second cop will guard a path $Q$ from a point $w\in\VT\cup\ET$ to a point $z\in\VT\cup\ET$. A closed curve $F$ in $X$ containing $P$ and $Q$ will be called a \emph{cage for the robber} if the following conditions hold:

\begin{figure}
  \centering
  \includegraphics[width=12cm]{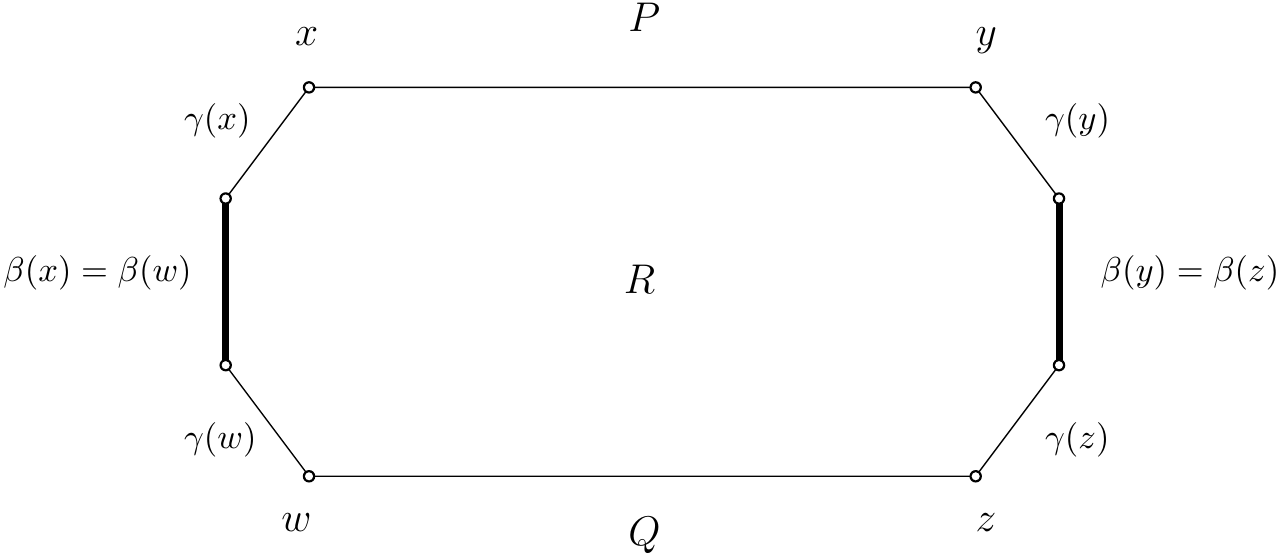}
  \caption{A cage for the robber encloses robber's territory $R$. Each cage is bounded by two ``internally geodesic'' paths $P$ and $Q$ (geodesic inside $\overline R$), four short geodesics $\gamma(u)$, with $\ell(\gamma(u)) \le \varepsilon/2$, $u\in\{x,y,w,z\}$, and two boundary segments $\beta(x)=\beta(w)$ and $\beta(y)=\beta(z)$ (shown as bold lines). Each of the constituents can be a single point. Moreover, $\beta(x)$ (and $\beta(y)$) can be empty, in which case $\gamma(x)=\gamma(w)$ ($\gamma(y)=\gamma(z)$, respectively). Each of the points depicted as a small circle is either a vertex in $\VT$ or a point on an edge in $\ET$. The paths $P$, $Q$, and $\gamma(u)$ need not be contained in the 1-skeleton of $\T$.}
  \label{fig:cage for the robber}
\end{figure}

  \begin{itemize}
    \item $F$ consists of $P\cup Q$ and of up to six additional paths as shown in Figure \ref{fig:cage for the robber}.
    \item Among the six additional paths contained in $F$ are two boundary segments $\beta(x)=\beta(w)$ and $\beta(y)=\beta(z)$, whose endpoints are joined to points $x,w$ and $y,z$ (respectively), as shown in Figure \ref{fig:cage for the robber}, by four \emph{short geodesics} $\gamma(u)$, $u\in\{x,y,w,z\}$, each of which is contained in a single triangle in $\T$ (and hence the length of each of them is at most $\varepsilon/2$). If $\gamma(u)$ contains more than just one point, then the points in $\int(F)$ close to $\gamma(u)$ are in $X$, i.e., $\gamma(u)$ is not part of a boundary of a hole inside $\int(F)$. Each boundary segment ($\beta(x)$ and $\beta(y)$) may be a single point on one of the boundary components and may also be absent (empty), in which case the corresponding two short geodesics coincide (i.e., $\gamma(x)=\gamma(w)$ or $\gamma(y)=\gamma(z)$).
    \item The points $x,y$ are endpoints of $P$ and $w,z$ are the endpoints of $Q$.
    \item The closed curve $F = P\cup\gamma(y)\cup\beta(y)\cup\gamma(z) \cup Q\cup\gamma(w)\cup \beta(w)\cup\gamma(x)$ does not cross itself (but may touch itself either inside or outside of the idealised drawing in Figure \ref{fig:cage for the robber}), and encloses a region $R=\int(F)$ containing the position of the robber. The region $R$ will be called the \emph{territory of the robber}. Let $\overline R = R\cup F$ be the closure of $R$.
    \item The paths $P$ and $Q$ are \emph{geodesic in $\overline R$}, by which we mean that no path in $\overline R$ joining two points of $P$ (or two points in $Q$) can be shorter than the subpath on $P$ (subpath on $Q$, respectively) joining the same two points.
  \end{itemize}

  Just for clarity, let us repeat that $R$ is an open set, $x,y,w,z$ are points in $\VT\cup\ET$, while $P$ and $Q$ need not be contained in the 1-skeleton of $\T$. See also the remarks at the caption of Figure~\ref{fig:cage for the robber}.

  Let us first show that if one cop guards $P$ and another path guards $Q$ (and guarding is defined with respect to the intrinsic distance in $\overline R$), then the robber cannot escape from the cage. More precisely, if the robber ever steps on $F$, one of the cops will be at distance at most $\varepsilon$ and so the cops will win the $\varepsilon$-approaching game, or he will be on $\beta(x)\cup\beta(y)$ and unable to exit $\overline R$. If the robber steps on $P\cup Q$, he will be caught since these two geodesics are guarded; if he steps on $\beta(x)\cup\beta(y)$ then he either has to step on some $\gamma(u)$ before, or he will not be able to leave $\overline R$ since $\beta(x)$ and $\beta(y)$ are both segments on the boundary of $S$. Thus it suffices to see that he cannot step on $\gamma(x)$ (the proof for $\gamma(u)$, $u\in\{y,w,z\}$, is the same). Since $\gamma(x)$ has length at most $\varepsilon/2$, any point on $\gamma(x)$ is at distance $t\le\varepsilon/2$ from $x$. The distance from $x$ of the cop who is guarding $P$ must be less or equal to $t$ in order to prevent the robber to reach $x$ without being caught. This means that the distance from that cop to the robber is at most $\varepsilon$.

  To start, we need to find the initial cage for the robber. To get it, we take the triangle $T_0=x_0y_0w_0$ in $\mathcal T$ and let $P$ be the edge $x_0y_0$ and $Q$ be the edge $w_0y_0$ (so that $x=x_0$, $y=z=y_0$, and $w=w_0$) of this triangle. The edge $x_0y_0$ serves as $\gamma(x)=\gamma(y)$ and the boundary segments $\beta(x)$ and $\beta(y)$ are empty.
  This configuration clearly forms a cage for the robber, where $P$, $Q$ and $\gamma(x)$ are the only nontrivial ingredients of the cage. When we bring two cops on $P$ and $Q$ (respectively), the robber must be in the region bounded by the boundary of the triangle $T_0$ since all points outside are in $T_0$ and hence at distance at most $\varepsilon$ from any point on $P\cup Q$. Next, we make sure that the two cops on the paths guard $P$ and $Q$ according to the shadow strategy. This is how we start.

  The outline of the proof is now as follows. Having and guarding a cage for the robber, we will show that we can change the cage and \emph{shrink robber's territory} if $R$ has nonempty intersection with at least one edge in $\ET$. By \emph{shrinking} we mean that the new territory of the robber intersects fewer edges in $\ET$. Once $R$ intersects no edges, $R$ will be contained in the interior of a single triangle of $\mathcal T$. Since that triangle has diameter less than $\varepsilon$, the robber is at distance at most $\varepsilon$ from the boundary of $R$, and since there is a cop on $P$, the cops win the $\varepsilon$-approaching game.

  In the first step we change the cage so that it is \emph{tame}, which means having the following properties:

  \begin{itemize}
    \item If $\beta(x)\ne\emptyset$, then the hole containing $\beta(x)$ lies in $\ext(F)$; the same holds for $\beta(y)$.
    \item If an edge $e$ in $\ET$ intersects $F$ and also contains a point in $R$, then one of the vertices of this edge is in $R$.
    \item We neglect the possibility that $F$ may touch itself in $\ext(F)$. If it does, we cut the surface along the touching points or touching segments and consider the touching parts to be disjoint after cutting. With this convention, $F$ is a simple closed curve.
  \end{itemize}

  Having a cage for the robber, we first tame it. The \emph{taming process} is as follows. Suppose that $\beta(x)\ne\emptyset$ and let $H$ be the hole in the interior of $F$ whose boundary $B$ contains $\beta(x)$. Let $\beta'(x)=B\setminus \int(\beta(x))$. By replacing $\beta(x)$ with $\beta'(x)$, $F$ changes into another cage $F'$, and the hole $H$ moves to the exterior of $F'$. Note that $\beta'(x)$ may contain the whole boundary component (if $\beta(y)$ was a single point), but this is not forbidden. The new cage shrinks robber's territory. Note that the condition on $\gamma(x)$ and $\gamma(w)$ from the second property of cages guarantee that $\beta'(x)$ does not intersect internal points of $\gamma(x)$ or $\gamma(y)$. This establishes the first property of tame cages.

  \begin{figure}
  \centering
  \includegraphics[width=12cm]{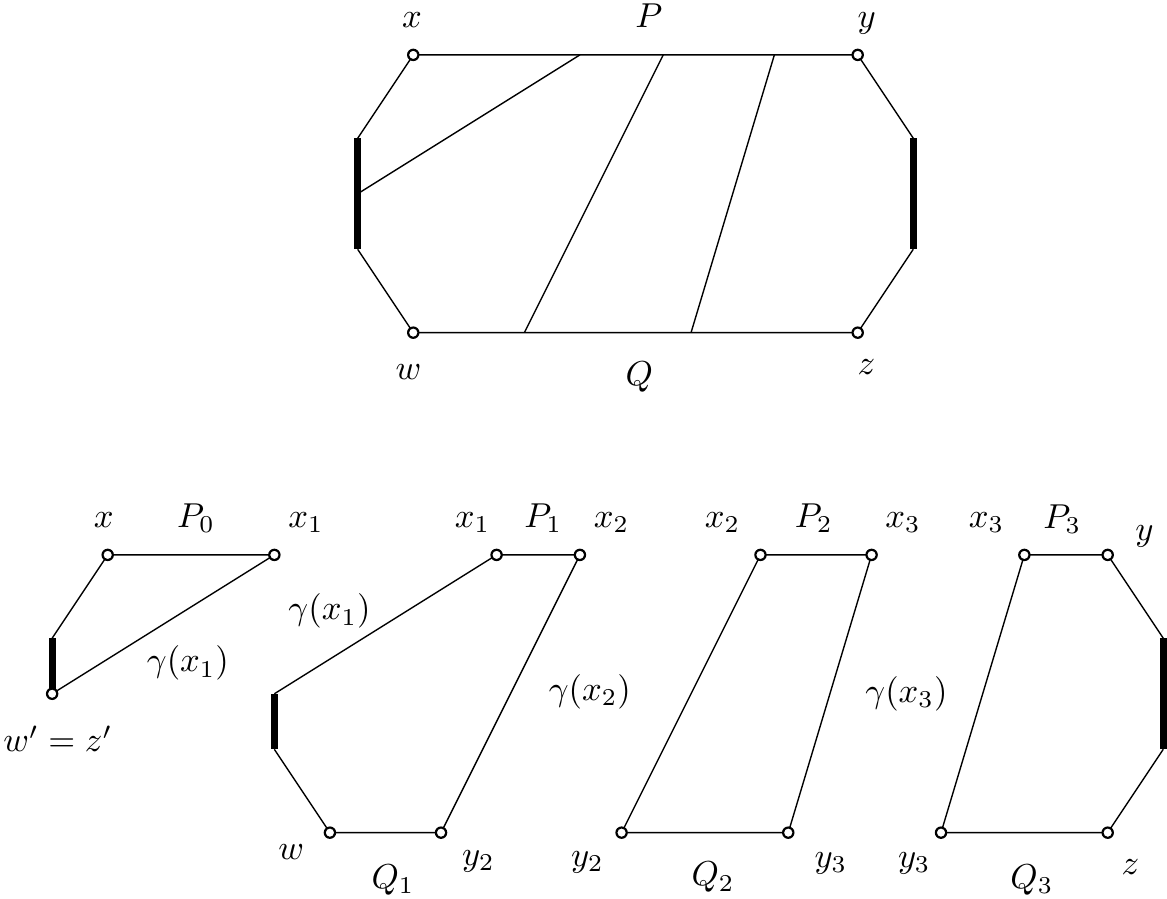}
  \caption{Chords from $P$ to $Q\cup \beta(x)\cup \beta(y)$ split the cage into subcages.}\label{fig:chords split cage}
  \end{figure}

  To establish the second tame condition, consider any edge having a segment $S$ joining two points $a,b$ in $F$ and otherwise being contained in $R$. Then $S$ splits $R$ into two regoins and is called a \emph{chord} in $F$. If we were able to replace our cage with another cage containing the region with the robber and that region would not contain the edge $e$ anymore, we would shrink robber's territory. The technical problem here is that the same edge can have many (possibly infinitely many) chords in $F$. Let us consider the components of the intersection of $e$ with $P$. If we traverse $e$, these components proceed on $P$ monotonically in the direction from $x$ towards $y$ (or vice versa), since both $P$ and $e$ are geodesics. By replacing the segment of $P$ between any two consecutive components of $P\cap e$ by the corresponding segment of $e$, we obtain another $(x,y)$-geodesic $P'$ containing all chords that connect two points on $P$. We can bring the third cop and guard this geodesic. If the robber is in one of the components bounded by $P$ and $P'$, then we can replace the cage with the cage consisting of parts of $P$ and $P'$ and then release the cop who was previously guarding $Q$. This clearly shrinks robber's territory. On the other hand, if the robber is not in any such component, we may replace $P$ with $P'$. Now, we may assume that $e$ no longer has a chord joining two points on $P$. Similarly, no chord joins two points on $Q$. Note that $e$ cannot intersect the internal points on any $\gamma(u)$, $u\in \{x,y,w,z\}$ by the definition of the cage. But it can have a point on $\beta(x)$. Consider now all chords of $e$ in $R$ that contain a point in $P$. The other endpoints of such chords are in $Q\cup \beta(x)\cup \beta(y)$. Any such chord from a point $a\in P$ to a point $b\in Q\cup \beta(x)\cup \beta(y)$ splits the cage into two subcages (the left and the right side as shown in Figure \ref{fig:chords split cage}, where we have three chords and the first, the last and any two consecutive chords give rise to subcages). We may have infinitely many chords but any two consecutive chords form a smaller cage and we only need the one that contains the robber. Since two cops guard $P$ and $Q$, they also guard all these smaller subcages in which one or two chords of $e$ play the role of short segments $\gamma(u)$. Note that each chord has length less than $\varepsilon/2$, thus the argument given above that the robber cannot escape from a cage shows that the robber will have to stay in the cage he is currently in.

  Lastly, we argue how to make $F$ be simple. By our convention, $F$ does not touch itself in the exterior, but it may touch itself one or multiple times (even infinitely many times) in $\overline R$. This case works in the same way as the case with the chords treated above since the self-touchings split $R$ into subcages.

  \begin{figure}
  \centering
  \includegraphics[width=14cm]{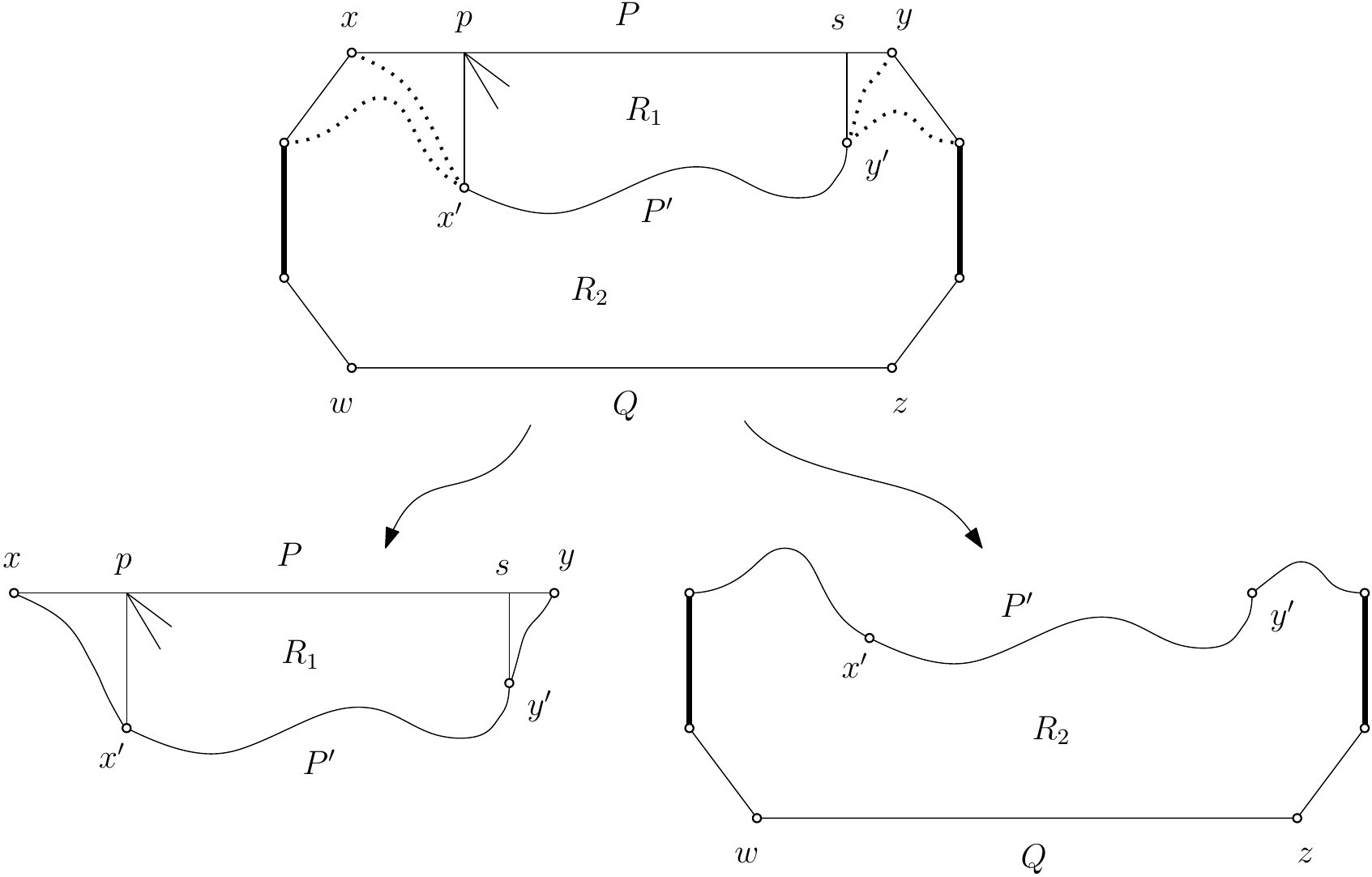}
  \caption{To win the $\varepsilon$-approaching game on a surface of genus 0, we guard the territory of the robber with two cops forming a cage and make the territory smaller by using the third cop.}\label{fig:planar strategy}
  \end{figure}

  After the cage is tame, the proof of the general step is simple. The step to shrink robber's territory is as follows. Starting at $x$, we find the first point $p\in P$ such that $p \in \VT\cup\ET$ and from this point we have an edge leading into $R$. If there are more than one such edges, we take the one that is closest in the clockwise order around $p$ to the face containing $\gamma(x)$ (with the obvious meaning when $\gamma(x)$ is just a point). Then we traverse this edge from $p$ into $R$ until we reach a vertex $x'$. This vertex is in $R$ since the cage is tame. The traversed edge is in the same triangle $T\in \T$ as $\gamma(x)$, hence the distance of $x'$ from the point $t=\gamma(x)\cap\beta(x)$ (or $t=w$ when $\beta(x)=\emptyset$) is at most $\varepsilon/2$. We let $\gamma(x')$ be a geodesic from $x'$ to $t$. Since the triangle $T$ is convex, $\gamma(x')$ is contained in $T$. We repeat the similar process at $y$, obtaining vertex $y'$ and a short geodesic from $y'$ to $\gamma(y)\cap \beta(y)$. See Figure \ref{fig:planar strategy}, where the geodesics are shown with dotted lines. We also consider geodesics from $x'$ to $x$ and from $y'$ to $y$ as shown in the figure. The cage is now split into two subcages. There is part of the triangle $T$ on the left and part of the triangle on the right between the two dotted short geodesics that is not covered by these subcages. But each of these parts is contained in a single triangle and is thus at distance at most $\varepsilon/2$ from $x$ (or from $y$); thus, if the robber is there, the cop guarding $P$ is at distance at most $\varepsilon$ from the robber. Now we bring the third cop to guard the $(x',y')$-geodesic $P'$ in $\overline R$. Once this is established, the robber is confined in one of the two subcages and we can now release one of the cops and continue shrinking robber's territory.

  One detail has to be added. Namely, the point $p$ may not exist. In that case, either $P\cup\gamma(x)\cup\gamma(y)$ is contained in single triangle $T_1\in \T$, or there is a ``hole" inside the disk bounded by $F$, and $P$ is on the boundary of that hole. In the latter case, $\beta(x)=\emptyset$ since otherwise $\beta(x)$ would be part of the boundary of the same hole inside $F$ (because of the assumption that $\varepsilon < \delta/3$) and this would contradict tameness of the cage. Thus, we may assume that $T_1$ exists. When this happens, we try to use the geodesic $Q$ instead of $P$. Again, if we are unsuccessful in shrinking the territory of the robber, then $Q\cup\gamma(w)\cup\gamma(z)$ is contained in single triangle $T_2\in \T$. So, we have both triangles $T_1$ and $T_2$. If $R\subseteq T_1\cup T_2$, the $\varepsilon$-approaching game stops with cops winning. Thus, we may assume that there are edges and vertices of $\T$ inside $R$. Since the graph $\VT\cup\ET$ is connected, there is a point $p\in \beta(x)$ (say) that is incident with an edge $e\in\ET$ leading into $R$. Let $x'$ be the endvertex of $e$ in $R$. As before, we choose $p$ and $e$ as close as possible to $x$ so that $x$ and $x'$ are in the same triangle in $\T$. We do the same from the other side (starting at $y$ and clockwise around $F$ until we get a point $s$ with an edge leading into $R$. We now proceed in the same way as in the generic version
  (Figure \ref{fig:planar strategy}) to shrink robber's territory.

  We have shown that we either end the game or succeed in shrinking robber's territory. This completes the proof.
\end{proof}

It is interesting that the strategy of cages used in the above proof can also be used for surfaces of genus 1. For the proof we need to recall some definitions.

Given a geodesic surface $X$, its \emph{systole}, denoted by $\sys(X)$, is defined as the least length of a noncontractible loop in $X$. Similarly, the \emph{essential systole}, $\sys_0(X)$, is defined as least length of a loop in $X$ that is noncontractible in the capped surface $\widehat X$ (when we cap off all boundary components of $X$).

\begin{theorem}\label{thm:genus 1}
  If $X$ is a compact geodesic surface that is topologically homeomorphic to a subset of the 2-dimensional torus,
  then $c(X)\le3$.
\end{theorem}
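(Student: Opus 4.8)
The plan is to adapt the cage strategy from the genus 0 proof. The fundamental obstruction to running that argument verbatim is that when $X$ is homeomorphic to a subset of the torus, the complement $X - p_0$ need not be planar, so the notions of interior and exterior of a closed curve $F$ are not immediately well-defined, and a single cage need not separate the surface. My first step would therefore be to use one cop to \emph{cut open the handle}. Concretely, I would guard a short essential cycle so as to reduce the topology to the planar case. By Theorem \ref{thm:triangulate surface epsilon} I triangulate $X$ with diameter-$\varepsilon/2$ triangles after invoking Corollary \ref{cor:cop number of X'} to assume piecewise geodesic boundary. The torus carries a noncontractible simple closed geodesic; I would have one cop guard an isometric subpath obtained from such a systole (or essential systole $\sys_0(X)$) via Lemma \ref{lem:isometric path}. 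More precisely, one cop guards a shortest noncontractible simple closed geodesic $C$; once that cop is positioned on $C$, the robber cannot cross $C$ without being caught within distance $\varepsilon$.

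\medskip

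Once $C$ is guarded, the robber is confined to one side, and cutting $X$ along $C$ yields a surface $X^\ast$ that is homeomorphic to a subset of the sphere (capping the torus along a nonseparating cycle drops the genus to $0$). The key point is that guarding $C$ with one cop reduces the effective game to a game on the planar piece $X^\ast$, where the cut curve $C$ now appears as (part of) the boundary that the robber may not cross. Since the cut curve behaves exactly like a guarded boundary segment, the robber's accessible region is topologically a subset of the sphere, and the remaining two cops can run the cage-shrinking strategy from Theorem \ref{thm:genus 0}. The subtlety is that the genus-0 proof used \emph{three} cops for cages, while here only two remain after one is committed to $C$. To resolve this I would instead guard $C$ with \emph{one} cop but use the genus-0 strategy with the full three cops on the cut-open surface, where the guarded curve $C$ now functions as a free boundary component rather than consuming one of the three cage cops; the cop on $C$ is an additional ``tail-like'' resource (Theorem \ref{th:with tail never rest} / the game-with-a-tail framework) that does not count against the three needed for the planar cage argument. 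The cleanest formulation is: guard $C$, cap it off to form a genus-0 surface $\widehat{X^\ast}$, apply Theorem \ref{thm:genus 0} there to get $c_\varepsilon \le 3$ cops, and observe that the guarding cop prevents the robber from exploiting the topology the cut destroyed.

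\medskip

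The main obstacle I expect is bookkeeping the interaction between the guarded cycle $C$ and the cages. When the cage-shrinking process of the genus-0 proof builds territories $R$ and chords, I must ensure these notions of interior/exterior remain well-defined after cutting along $C$, and that the robber genuinely cannot pass from one side of $C$ to the other. A second technical point: after capping off $C$ to obtain a genus-$0$ surface, the capped region is not part of the real surface, so I must verify that the guarded-curve cop continues to enforce the barrier throughout, including at the endpoints where $C$ meets $\partial X$ or where the short geodesics $\gamma(u)$ of a cage abut $C$. I would also need to confirm that a shortest noncontractible simple closed geodesic in $X$ is indeed isometric along subpaths (so Lemma \ref{lem:isometric path} applies) — this follows from the observation at the end of the ``Topological and metric properties'' subsection that systoles are geodesic and simple. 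Handling the degenerate cases (where $C$ meets the boundary, or where the essential systole rather than a closed geodesic must be used because every noncontractible curve touches $\partial X$) is where the argument will require the most care, paralleling the degenerate-cage analysis already developed in the genus-0 proof.
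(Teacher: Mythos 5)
Your plan breaks down at its very first step, and the breakdown is not a technicality. One cop cannot guard a noncontractible simple closed geodesic $C$: Lemma \ref{lem:isometric path} applies to \emph{isometric paths}, and a systole is a closed curve which is only a union of \emph{two} isometric paths (each half-arc is isometric, but the whole loop is not, and the nearest-point shadow onto a closed curve is not $1$-Lipschitz). Guarding $C$ therefore costs two cops, which is exactly how the paper's Theorem \ref{thm:upper bound linear} is structured: two cops per handle, $c(X)\le 2g+1$, with the induction \emph{bottoming out} at $g=1$ precisely because the cut-and-guard argument cannot deliver the bound $3$ there. Your attempt to rescue the count by declaring the cop on $C$ a ``tail-like resource that does not count against the three'' is not justified: that cop must permanently track the robber's shadow on $C$ to maintain the barrier, so it cannot simultaneously guard a cage path $P$ or $Q$ or establish the shrinking path $P'$. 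Honestly accounted, your strategy uses $2+3=5$ cops on the torus, not $3$.

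The paper takes a genuinely different route to avoid paying for the systole at all. It caps the boundary, cuts $\widehat X$ along $\gamma_1$ and $\gamma_2$ to get a fundamental polygon, and passes to the universal cover $\widetilde X$, where the cops are additionally allowed to ``teleport'' within fibers of the covering projection. Lehner's trick then produces a large closed curve $\Gamma$ enclosing the polygon containing the robber's lift, together with $N$ shortest paths $P_1,\dots,P_N$ from nearby points $x_i\in\Gamma$ (with $d(x_i,x_{i+1})\le\varepsilon/2$) to a center $x_0$; three cops run a \emph{bisection} on these radial paths, repeatedly halving the sector containing the robber, until two cops guard consecutive paths $P_i,P_{i+1}$, which together with the short arc of $\Gamma$ already form a cage --- at which point the planar strategy finishes. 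The quantitative heart of the argument, entirely absent from your proposal, is the escape estimate: each bisection step costs the robber at most distance $D=\diam(X)$ of drift from $x_0$, while crossing from one fundamental polygon to a farther one costs him at least $D_0=\sys_0(X)$, so choosing $n$ with $n\ge 1+\frac{D}{D_0}\log_2(20nD\varepsilon^{-1})$ guarantees the robber is still inside $\Gamma$ when the bisection terminates. If you want to salvage a cut-along-a-curve approach for genus $1$ with only three cops, you would need a way to confine the robber to a planar region without permanently committing two cops to a closed curve --- which is exactly the problem the covering-space argument is designed to solve.
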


\begin{proof}
  Again, we may assume that each boundary component of $X$ is piecewise geodesic.
  Let $\widehat X$ be the torus obtained by ``capping off'' all boundary components and using a metric on these caps so that the intrinsic metric in $\widehat X$ induces the original intrinsic metric in $X$.
  To start with, we first cut $\widehat X$ along a shortest noncontractible closed curve $\gamma_1$, which is easily seen to be a closed geodesic. After cutting, we obtain a cylinder (with additional holes of $X$ inside), whose boundary consists of the two copies of $\gamma_1$. Next, we take a shortest geodesic $\gamma_2$ joining the two copies of $\gamma_1$ in the cylinder. By cutting along $\gamma_2$, we obtain a fundamental polygon of $\widehat X$. By tessellating the plane with copies of this fundamental polygon, we obtain the universal cover of $\widehat X$. Our assumption on the metric in the capped discs implies that $\gamma_1$ and $\gamma_2$ are both contained in $X$. Now we consider the cover $\widetilde X$ of $X$ obtained from the tessellation by removing all capped discs. Let $\pi:\widetilde X\to X$ denote the covering projection.

  We will play the game in $X$ but will make the strategy based on what happens in $\widetilde X$. For the initial positions of players in $X$ we lift them to the same fundamental polygon in $\widetilde X$. Now the cops look at what is going on in $X$. When any player in $X$ moves, his move is lifted to $\widetilde X$. In addition to this, the cops can at any time move their position $x$ in $\widetilde X$ into any other point $x'\in \pi^{-1}(\pi(x))$, but they will consider the robber's lifted position in $\widetilde X$ following the normal rules of the game. If the cops with this added ``teleporting'' ability can approach the robber in $\widetilde{X}$ to a distance $\varepsilon$, they will win the $\varepsilon$-approaching game in $X$. We fix a small $\varepsilon>0$ and consider $X$ dissected into a set of triangles $\T$ satisfying conditions of Theorem \ref{thm:triangulate surface epsilon}. We consider $\widetilde X$ to be triangulated with the set of triangles $\widetilde\T = \pi^{-1}(\T)$.

  Let $D=\diam(X)$ and $D_0=\sys_0(X)$, and let $n$ be an integer such that
  $$n \ge 1 + \frac{D}{D_0} \log_2(20nD\varepsilon^{-1}) .$$

  \begin{figure}
  \centering
  \includegraphics[width=12cm]{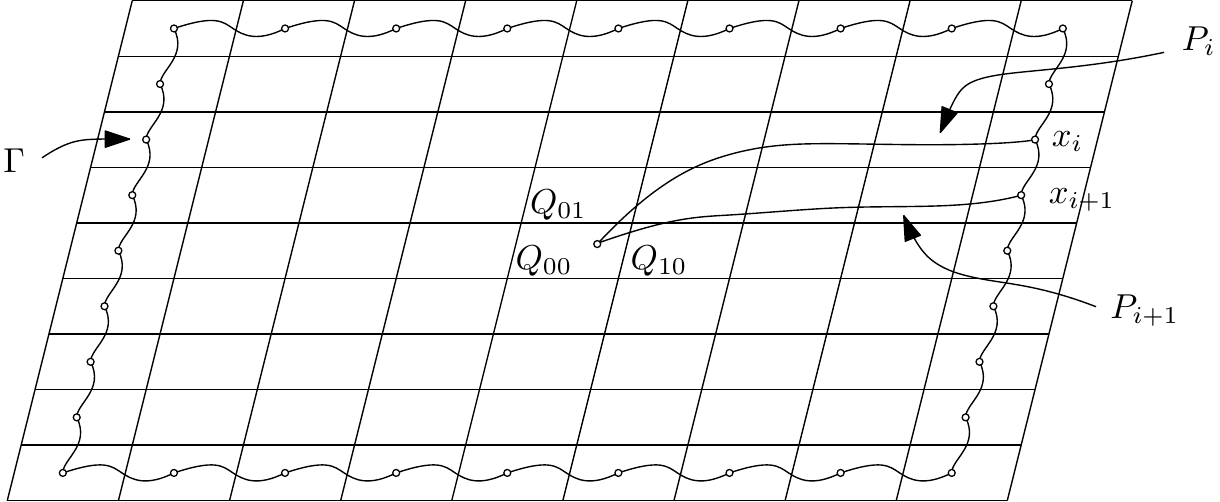}
  \caption{Lehner's trick to get the robber into a cage.}\label{fig:Lehner's trick}
  \end{figure}

  In the strategy of three cops in $\widetilde{X}$, we will use the trick of Lehner \cite{Le21} to obtain a bounded cage in $\widetilde X$ containing the position of the robber. The copies $Q_{ij}$ of the fundamental polygon can be enumerated by using integer pairs $(i,j)\in \ZZ^2$ by considering the way they tessellate the plane. Let $n$ be sufficiently large and consider a shortest closed curve $\Gamma$ through all polygons $Q_{ij}$ with $\max\{|i|,|j|\}=n$ that contains $Q_{00}$ in its interior. Let $N = \lceil 2\ell(\Gamma)/\varepsilon\rceil$ and let $x_1,x_2,\dots,x_N$ be points that appear on $\Gamma$ in this cyclic order such that $d(x_i,x_{i+1})\le \varepsilon/2$ for $i\in[N]$ (where $x_{N+1}=x_1$). Since
  $\ell(\Gamma) \le 4(2n+1)\max\{\ell(\gamma_1),\ell(\gamma_2)\}$ and $\ell(\gamma_j)\le 2D$ for $j=1,2$, we have
  $$ N = \lceil 2\ell(\Gamma)/\varepsilon\rceil \le 20nD\varepsilon^{-1}. $$
  Let $x_0\in Q_{00}$, and let $P_i$ be a shortest path from $x_i$ to $x_0$ in $\widetilde X$, $i\in[N]$. We may assume that these paths do not cross each other (but they may touch). The situation is sketched in Figure~\ref{fig:Lehner's trick}.

  Suppose that the robber is at distance $t$ from $x_0$. Let $P_i$ be any of the paths in our collection. Then we can teleport any cop to a point that is at distance at most $D$ from the point $p$ on $P_i$ whose distance from $x_0$ is equal to $t+D$. This cop can reach $p$ in time $D$ and at that time his distance from $x_0$ will be at least the distance of the robber from $x_0$. Thus the cop can start approaching $x_0$ along $P_i$ until he reaches the shadow of the robber on this path (see the proof of Lemma \ref{lem:isometric path}). Once he achieves this, he and the robber will be at distance at most $t+D$ from $x_0$. If the point $x_0$ is chosen to be the initial point of the robber in $Q_{00}$, then after catching the shadow of the robber $m$ times (for various paths $P_i$), the distance of the robber from $x_0$ will be at most $mD$. Now, three cops perform the bisection process. Two of them first catch the shadow on $P_1$ and $P_{n/2}$. Then, depending in which part of the disk $\int(\Gamma)$ is the robber, the third cop sets to guard the path in the middle. Once he guards that path, one of the cops can be released and the bisection process may continue. In $\log_2 N$ steps, there will be two cops guarding two consecutive paths $P_i$ and $P_{i+1}$. Since $d(x_i,x_{i+1})\le \varepsilon/2$, these two paths together with the short segment on $\Gamma$ form a cage for the robber, and now the planar strategy can be used to win the $\varepsilon$-approaching game in that cage.

  All that needs to be added is to verify that the robber cannot escape $\int(\Gamma)$ before two of the cops control consecutive paths. After $m = \lceil\log_2 N\rceil$ steps of the bisection, the robber is at distance at most $mD$ from $x_0$. In order to come from $Q_{ab}$ into adjacent polygon further away from $Q_{00}$, the robber needs at least $D_0$ time units. So, after $m$ steps, he will be in some $Q_{ab}$ with $\max\{|a|,|b|\} \le mD/D_0+1 < n$ and this polygon is still inside $\Gamma$.
\end{proof}

To summarize, we have the following result.

\begin{corollary}\label{cor:genus0and1}
  Let $X$ be a compact geodesic surface homeomorphic to a subset of the sphere, the torus, the projective plane, or the Klein bottle. Then $c(X)\le3$.
\end{corollary}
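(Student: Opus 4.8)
The plan is to reduce every case to Theorem \ref{thm:genus 0} or Theorem \ref{thm:genus 1}. When $X$ is homeomorphic to a subset of the sphere or of the torus the bound $c(X)\le 3$ is exactly one of these two theorems, so nothing is left to do there. Suppose then that $X$ is homeomorphic to a subset of the projective plane or of the Klein bottle. If such an $X$ happens to be orientable, then $X\cong S_{g,k}$ with $g\le 1$, since each of the two ambient surfaces has orientable genus at most $1$; hence $X$ is homeomorphic to a subset of the sphere (if $g=0$) or of the torus (if $g=1$), and again the bound follows from Theorem \ref{thm:genus 0} or Theorem \ref{thm:genus 1}. This leaves only the non-orientable $X$, which is the case I would actually work out.

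So assume $X$ is non-orientable, homeomorphic to $N_{1,k}$ or $N_{2,k}$ for some $k\ge 1$. I would pass to the connected orientable double cover $\pi:\widetilde X\to X$ and give $\widetilde X$ the pulled-back intrinsic metric, so that $\pi$ is a local isometry; then $\widetilde X$ is again a compact geodesic surface. Each deleted disk of $X$ is contractible, so its boundary circle is orientation-preserving and lifts to two circles; a short Euler-characteristic computation then identifies the double cover of $N_{1,k}$ with $S_{0,2k}$ and the double cover of $N_{2,k}$ with $S_{1,2k}$. In either case $\widetilde X$ is a compact geodesic surface homeomorphic to a subset of the sphere or of the torus, so Theorem \ref{thm:genus 0} or Theorem \ref{thm:genus 1} gives $c_\varepsilon(\widetilde X)\le 3$ for every $\varepsilon>0$.

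The heart of the matter is to push a winning strategy on $\widetilde X$ back down to $X$, and here I would simply reuse the covering-game device already employed in the proof of Theorem \ref{thm:genus 1}. The game is played physically in $X$, but the cops carry virtual positions in $\widetilde X$ projecting under $\pi$ to their real positions. Fixing a lift of the initial configuration, the robber's motion in $X$ lifts by the path-lifting property to a unique legal robber play in $\widetilde X$ (the lift of a move has the same length, as $\pi$ is a local isometry); the cops answer this lifted robber with their $\varepsilon$-approaching strategy on $\widetilde X$ and project each virtual move back to $X$. Since $\pi$ is distance non-increasing, every projected cop move is legal, because $d_X(\pi(\tilde c),\pi(\tilde c'))\le d_{\widetilde X}(\tilde c,\tilde c')$, and the projection of the virtual position stays equal to the real position throughout. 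When a virtual cop comes within $\varepsilon$ of the robber's lift in $\widetilde X$, its projection lies within $\varepsilon$ of the robber in $X$. This gives $c_\varepsilon(X)\le c_\varepsilon(\widetilde X)\le 3$ for all $\varepsilon>0$, and then $c(X)\le 3$ by Corollary \ref{cor:epsilon-approaching suffices}.

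I expect the one genuine subtlety to be the legitimacy of tracking the robber's lift: determining the endpoint sheet $\tilde r^{\,n}$ a priori requires the robber's actual path, whereas the discrete game reveals only the endpoint $r^n$, and for long moves two geodesics with the same endpoints may lift to different sheets. I would dispose of this exactly as in Theorem \ref{thm:genus 1}, by granting the cops the harmless ``teleporting'' move that sends a virtual position $\tilde c$ to any point of the fibre $\pi^{-1}(\pi(\tilde c))$; such a move does not change the projection to $X$, so it is free in the real game, and it removes any sheet ambiguity. As teleporting only strengthens the cops, the bound $c(\widetilde X)\le 3$ already established is more than sufficient. Everything else---the Euler-characteristic bookkeeping and the check that $\widetilde X$ is a compact geodesic surface of the stated type---is routine.
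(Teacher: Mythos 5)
Your proposal is correct and takes essentially the same route as the paper: the sphere and torus cases are exactly Theorems \ref{thm:genus 0} and \ref{thm:genus 1}, and the nonorientable cases are handled by passing to the orientable double cover and lifting the robber's moves while the cops play upstairs and project down, which is precisely the paper's (much terser) argument. Your additional care about sheet ambiguity and the Euler-characteristic bookkeeping only fills in details the paper leaves implicit (note only that $k=0$ is also allowed in the nonorientable case, with the same argument).
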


\begin{proof}
  Theorems \ref{thm:genus 0} and \ref{thm:genus 1} give the corollary for the two orientable surfaces. For the projective plane and the Klein bottle, we consider their double cover which is the sphere or the torus and let the three cops use the strategy on the covering space (lifting the initial position and the moves of the robber in $X$).
\end{proof}

\section{Cops and robbers on surfaces of higher genus}

Now we shall deal with arbitrary geodesic surfaces. First, we will provide lower bounds on the cop number. The main outcome in Section \ref{sect:lower bounds} is that there are surfaces of genus $g$ whose cop number is at least $\Theta(g^{1/2-o(1)})$, where the asymptotics for $o(1)$ is with respect to $g$. We will end up with an $O(g)$ upper bound.

\subsection{Lower bounds}
\label{sect:lower bounds}

In providing examples of surfaces whose cop number is large, we will use examples of metric graphs with large cop number combined with some special polyhedral discs that we will introduce next.

Let $d_1,\dots,d_k$ and $h$ be positive real numbers. For $i\in[k]$, let $Q_i$ be the rectangle with side lengths $d_i$ and $h$. Then we define the \emph{polygonal cylinder} $C=C(h;d_1,\dots,d_k)$ by consecutively identifying the sides of length $h$ of $Q_1,\dots,Q_k$, so that the ``left" side of $Q_i$ is identified with the ``right'' side of $Q_{i+1}$ for $i=1,\dots,k$, where $Q_{k+1} = Q_1$. The union $B$ of all ``bottom" sides of the rectangles used to obtain $C$ is called the \emph{base} of $C$. A similar construction, using a product $B\times [0,h]$, gives us a \emph{cylinder of height $h$ over $B$} for any rectifiable simple closed curve $B$. Equivalently, we can split $B$ into $k$ geodesic segments of lengths $d_1,\dots,d_k$ and use the same construction as described above.
Then the cylinder $C=B\times [0,h]$ is also a geodesic space that is isometric to the $\ell(B)\times h$ rectangle having its sides of length $h$ identified. Herewith, we will use the intrinsic $\ell_2$-metric on $C$, when viewed as the product of $B$ and the interval $[0,h]$ of length $h$:
$$
    d_C((b,t),(b',t')) = \left( d_B(b,b')^2 + (t-t')^2 \right)^{1/2}.
$$
For every point $x\in C$, we define the point $\pi(x)\in B$ as the closest point to $x$ in $B$ (the \emph{projection} to $B$), i.e. $\pi(b,t) = (b,0)$.

If we contract the set $B\times \{h\}$ into a point, we obtain the \emph{cone of height $h$ over $B$}. The cone $D = B\times [0,h]/(B\times\{h\})$ is a topological disk and we consider the local Euclidean distance in $D$, in which the length of $B\times\{t\}$ is equal to $(1-\tfrac{t}{h})\ell(B)$. If $h = \ell(B)/(2\pi)$, the cone is isometric to the disk of radius $h$ in the Euclidean plane and we call it the \emph{flat cone over $B$}.


\begin{figure}
  \centering
  \includegraphics[width=14cm]{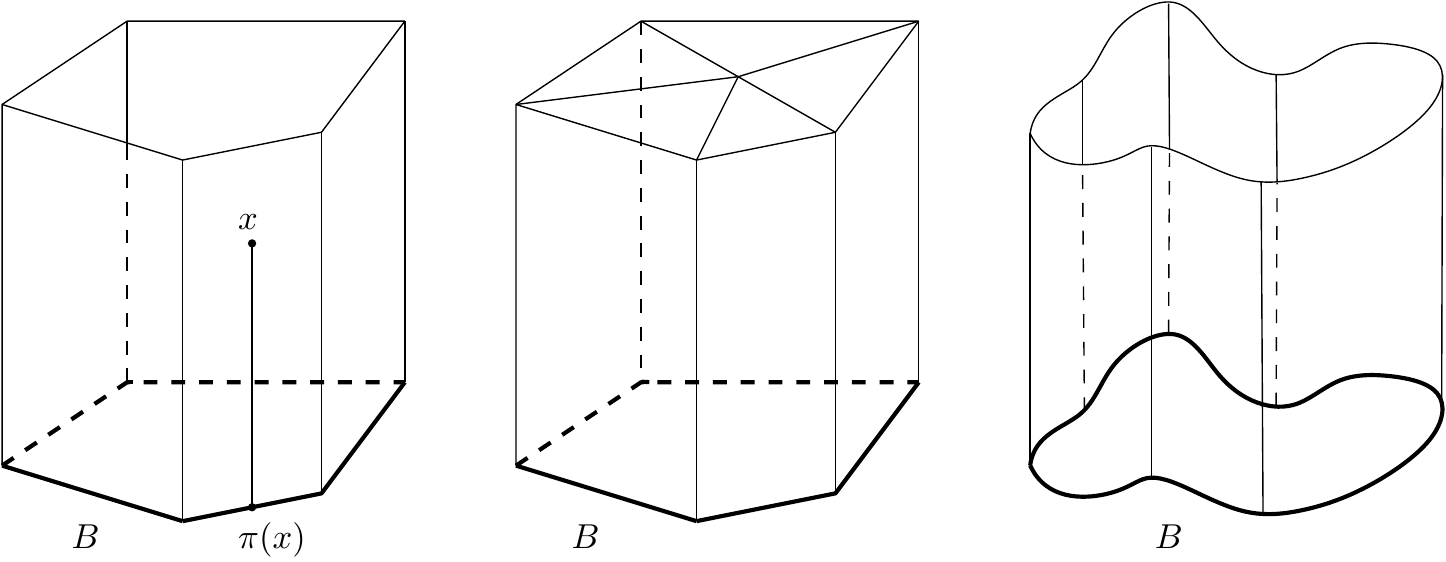}
  \caption{A polyhedral cylinder $C$ with base $B$ and height $h$, the capped cylinder $\widehat C$, and a cylinder over a closed curve. The apex point of the capped cylinder has the same distance $h'=\ell(B)/(2\pi)$ to each point at the top of the cylinder. The projection $\pi(x)$ of a point $x$ into $B$ is depicted.}\label{fig:polyhedral disk}
\end{figure}

If $X$ is a geodesic space and $B$ is a piecewise geodesic closed curve in $X$, we \emph{add the cylinder over $B$} by taking the union $X\cup C$, identifying $B\subset X$ with $B\times\{0\}\subset C$, and then combining the metrics into the intrinsic metric in the resulting geodesic space $\widehat X$. If we now \emph{cap off} the cylinder $C$ by adding a flat cone $D$ over $B'=B\times\{h\}$, and identify $B\times\{h\}$ with the base of the cone, we say that we have \emph{added a capped cylinder over $B$}, or simply that we have \emph{capped off} $B$. Usually we will use this term when $B$ is a boundary component of a geodesic surface $X$. See Figure \ref{fig:capped cylinder}.

\begin{figure}
  \centering
  \includegraphics[width=5cm]{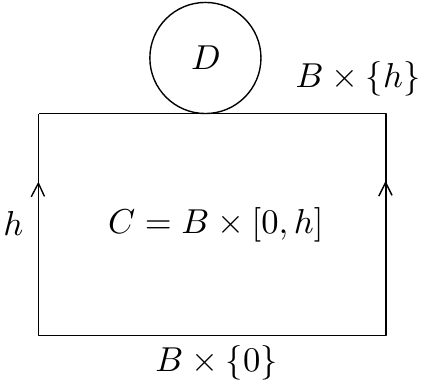}
  \caption{The intrinsic metric in a capped cylinder over $B$ uses the Euclidean metric of the $(\ell(B) \times h)$-rectangle in $C$ and the Euclidean distance within the disk $D$ of radius $\ell(B)/(2\pi)$. The left and right sides of $C$ are identified, and the top side of $C$ is identified with the perimeter of $D$.}\label{fig:capped cylinder}
\end{figure}

\begin{lemma}\label{lem:cylinder catch}
  Let\/ $Y= C\cup D$ be a capped cylinder over a piecewise geodesic boundary $B\subseteq X$, where $C$ is a cylinder of height $h$ and $D$ is the flat cone used to define the capped cylinder. Suppose that the height $h$ of the cylinder is at least $\ell(B)$ and that we have three cops positioned in $B$. Suppose that the robber is in $Y$ and if he is at a point $y\in C$, then one of the cops is at $\pi(y)$. Then the cops can catch the robber, and during the gameplay, the robber will not be able to come to $B$ without being caught.
\end{lemma}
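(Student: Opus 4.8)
The plan is to devote one cop to guarding the base $B$ as a \emph{shadow cop} and to use the other two both to stop the robber from circling and to shrink his territory toward the apex. The starting point is that the projection $\pi\colon C\to B$, $\pi(b,t)=(b,0)$, is $1$-Lipschitz for motions that stay inside the cylinder: any path in $C$ from $(b,t)$ to $(b',t')$ has length at least $d_C((b,t),(b',t'))\ge d_B(b,b')=d(\pi(b,t),\pi(b',t'))$, and the hypothesis $h\ge\ell(B)$ guarantees that no path through the cone gives a shortcut between two points of $B$. Hence, while the robber moves in $C$, a cop placed at $\pi(r)$ can keep tracking $\pi(r)$, and his distance to a robber sitting at height $t$ is exactly $t$. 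In particular the robber cannot reach $B$ (height $0$) without the shadow cop being at the same point, so as long as the shadow is intact the robber is caught the instant he touches $B$.

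The only way to break the shadow is to climb to the top rim $B'=B\times\{h\}$ and cut across the flat cone $D$. Since $D$ is isometric to a Euclidean disc of radius $\ell(B)/(2\pi)$, such a crossing lets the robber advance his $\pi$-image along $B$ while the shadow cop, who is confined to $B$, falls behind by a bounded amount $\delta_0$. Crucially, this lateral gain is available only at the very top, i.e.\ at distance $\ge h$ from $B$. To keep the robber from exploiting the cone repeatedly by looping around the cylinder, I would assign a second cop to guard a meridian $M$ running from a point of $B$ through the cylinder to the apex; this is an isometric path of length $h+\ell(B)/(2\pi)$, so it is guardable by Lemma~\ref{lem:isometric path}. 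With $M$ guarded the robber can never encircle $B$, and the lateral distance he can travel during any descent is bounded by $\ell(B)$.

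Now comes the use of $h\ge\ell(B)$. After any cone excursion the robber stands on $B'$ and must descend the full height $h$ to return to $B$; let $X\le\ell(B)$ be the lateral ground he covers during this descent. Because a descending path joins points that differ by $X$ horizontally and $h$ vertically, its length --- and hence the time it consumes --- is at least $\sqrt{X^2+h^2}$, while over the same time the shadow cop, racing along $B$ toward $\pi(r)$, reduces his lateral deficit by at least $\sqrt{X^2+h^2}-X$. Since $X\le\ell(B)\le h$, this gain is at least $(\sqrt2-1)\ell(B)$, which exceeds the bounded lag $\delta_0$ created by the cone. Thus the shadow is always re-established before the robber can get down to $B$, and no sequence of cone excursions helps because each one costs a fresh full-height descent. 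This proves the second assertion of the lemma.

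For the capture itself I would use that $Y=C\cup D$ is topologically a disc and that $D$ is genuinely flat, so the apex is an ordinary interior point. Guarding meridians through the apex and running the bisection argument from the proof of Theorem~\ref{thm:genus 1} confines the robber to an ever thinner sector abutting the apex, while the no-escape guarantee keeps his territory from leaking out along $B$; together these drive the territory to a single triangle and win the $\varepsilon$-approaching game, as in Theorem~\ref{thm:genus 0}. The main obstacle is exactly the top of the cylinder together with the cone, where $\pi$ stops being $1$-Lipschitz: everything rests on bounding the lateral advantage extractable from $D$ and on showing that the mandatory full-height descent gives the shadow cop time to reposition, which is where $h\ge\ell(B)$ is indispensable. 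The remaining delicacy is purely one of bookkeeping --- scheduling only three cops so that they simultaneously prevent circling, shadow the base, and shrink the territory, handing off roles as the robber moves between the low part of the cylinder and the cone.
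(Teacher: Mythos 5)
Your overall architecture (guard $B$, guard meridians to the apex, shrink toward the apex) matches the paper's, but you diverge at the crucial first step, and that is where the trouble lies. The paper never relies on a single shadow cop maintaining $\pi(r)$: it splits $B$ into two arcs, each of which is an isometric path in $Y$ (because any path between two points of $B$ that detours through the cone has length at least $2h\ge 2\ell(B)$), and guards $B$ with \emph{two} cops by Lemma~\ref{lem:isometric path}. This makes the ``robber cannot reach $B$'' claim immediate and needs no analysis of cone crossings at all. Your single shadow cop, by contrast, must survive the robber's excursions through $D$, and your quantitative catch-up argument does not close. The lag $\delta_0$ is not created by one cone crossing only: each diametral crossing of $D$ costs the robber $\ell(B)/\pi$ of time while advancing his $\pi$-image by $\ell(B)/2$, so a single crossing already creates a lag of $\tfrac{\pi-2}{2\pi}\ell(B)\approx 0.18\,\ell(B)$, and by crossing repeatedly (without descending) the robber drives the lag up to its cap of $\ell(B)/2$. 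Your claimed recovery during a full-height descent is $\sqrt{X^2+h^2}-X\ge(\sqrt{2}-1)\ell(B)\approx 0.41\,\ell(B)$, which does \emph{not} exceed $\ell(B)/2$; with $h=\ell(B)$ exactly, a robber who first saturates the lag and then descends along a maximally oblique line reaches $B$ with the shadow cop still a positive distance away. A correct version of your argument exists (with the meridian guarded, the \emph{net} lateral displacement away from the cop during a descent is at most $\ell(B)-\phi_0$, where $\phi_0$ is the current lag, which yields final lag at most $\ell(B)-h\le 0$), but it is not the estimate you wrote, and it also requires the meridian cop to be in place before the robber's first cone excursion, a scheduling point you defer to ``bookkeeping.''

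Two further points. First, the lemma asserts that the cops \emph{catch} the robber, not merely that they win the $\varepsilon$-approaching game; your endgame, which shrinks the territory to a small triangle, only delivers the latter. The paper handles actual capture in the capped disk by invoking the strategy of \cite{IrMo22}, and some such ingredient is needed here because the lemma is used for the strong cop number $c_0$ as well. Second, your cop allocation is tighter than the paper's for no gain: the paper's order (two cops on $B$, then one meridian, then release one $B$-cop once the meridian confines the robber to a cut-open disk in which all of $B$ is a single isometric path) achieves everything your shadow cop is meant to do, with the hypothesis $h\ge\ell(B)$ entering only through the clean statement that $B$-arcs are isometric in $Y$.
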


\begin{proof}
  Note that the metric in $Y$ is locally Euclidean (see Figure \ref{fig:capped cylinder}), so the cages in the pursue of the robber will consist of two geodesic segments $P\cup Q$, without the need for added short geodesics. We may assume that $\ell(B)=2\pi$, so that $D$ is isometric to the unit disk and $C$ is isometric to the flat cylinder obtained from the rectangle $[0,2\pi]\times [0,h]$ by identifying the left and the right side.

  Having initial conditions stated in the lemma, two of the cops can position themselves on $B$ and guard $B$. In this way they make a cage, preventing the robber from entering $B$. Now we start with the strategy of shrinking the territory of the robber. First we keep 2 cops in $B$. If the robber is in $C$ or if he ever enters $C$ from the interior of $D$, these two cops can catch his shadow in $B$ before he could enter $B$. The third cop now positions himself so that he guards a geodesic from the center $z$ of the cap $D$ to a point $b_0\in B$. He will be at the same distance from $z$ as the robber all the time from now on. Reaching this situation, a single cop on $B$ will be able to guard $B$ from now on. Now the released cop can position himself so that he guards a geodesic from $z$ to the point $b_1\in B$ that is opposite to $b_0$ on $B$. Now the robber's territory is half smaller than before. From this point on, the three cops start using a strategy that has been discovered in \cite{IrMo22}, and we refer to that paper for the proof on how they eventually catch the robber.
\end{proof}

\begin{lemma}\label{lem:cylinder1}
  Let $X$ be a compact geodesic space with diameter $d$ and let $B\subset X$ be a piecewise geodesic simple closed curve of length $b$ in $X$. Let $\widehat X$ be the geodesic space obtained by adding a capped cylinder $C\cup D$ over $B$, whose height is greater or equal to $b+d$. If $c(X)\ge 3$, then $c(\widehat X) \le c(X)$ and $c_0(\widehat X) \le c_0(X)$.
\end{lemma}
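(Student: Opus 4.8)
The plan is to reuse the $c(X)\ge 3$ cops in two regimes — an ``$X$-regime'' governed by the winning strategy on $X$, and a ``cylinder-regime'' governed by Lemma~\ref{lem:cylinder catch} — and to glue the two together using the height bound $h\ge b+d$. Write $\widehat X=X\cup Y$, where $Y=C\cup D$ is the capped cylinder attached to $X$ along $B$, and recall the projection $\pi\colon C\to B$, $\pi(b,t)=b$, extended by the identity on $X$. By Corollary~\ref{cor:epsilon-approaching suffices} it suffices to show that for every $\varepsilon>0$ the $c(X)$ cops can play an $\varepsilon$-approaching game on $\widehat X$; the strong version $c_0(\widehat X)\le c_0(X)$ will follow from the same argument, reading ``capture'' for ``approach'' throughout.

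First I would record the metric fact that makes the gluing possible. Let the ceiling height be $h_0=h-d/2$. For points of $X\cup(B\times[0,h_0])$ the map $\pi$ is $1$-Lipschitz with respect to the intrinsic metric of $\widehat X$: the route around the cylinder and the route through $X$ are both length-nonincreasing under $\pi$, while a shortcut over the cone is not competitive below $h_0$, since it must first climb to height $h$, a detour of total length $\ge 2(h-t)>d\ge\diam(X)$ once $t<h_0$. Consequently the \emph{virtual robber} $\bar r:=\pi(r)\in X$ moves $1$-Lipschitzly in $X$ as long as $r$ stays in $X\cup(B\times[0,h_0])$, so the cops may legitimately run their winning $X$-strategy against $\bar r$. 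This has two simultaneous payoffs: if $r\in X$ then $\bar r=r$ and the robber is caught (giving the strong statement); and if instead $r\in C$ below the ceiling, then the cop that has caught the shadow sits exactly at $\pi(r)\in B$, which is precisely the tracking hypothesis of Lemma~\ref{lem:cylinder catch}. Meanwhile the two spare cops — here $c(X)\ge 3$ is used — walk onto $B$, reaching it in time at most $d$. Once one cop is at $\pi(r)$ and all three are on $B$, Lemma~\ref{lem:cylinder catch} finishes the game inside $Y$ and the robber can never reach $B$ again.

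The only situation not covered by the $X$-regime is when the robber climbs above the ceiling $h_0$, into the cone or the very top of the cylinder, where $\pi$ may jump along cone shortcuts and the designated cop can lose him. This is exactly where the height budget is spent. To return from the ceiling to $B$ the robber must descend a further height at least $h_0=h-d/2$, which exceeds $b$ because $h\ge b+d$; and throughout this descent, being below the ceiling, his projection $\pi(r)$ again moves continuously and $1$-Lipschitzly on $B$. The cops use the descent as lead time to re-synchronise: all three stand on $B$, and the designated cop chases the shadow $\pi(r)$ exactly as in the guarding argument of Lemma~\ref{lem:isometric path}, which needs time at most the length $b$ of the guarded curve to lock onto a $1$-Lipschitz shadow. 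Since the available lead time is $\ge b$, and the robber who reaches a point of $B$ while the cop is already at its $\pi$-image is caught there, the configuration of Lemma~\ref{lem:cylinder catch} is re-established before the robber can escape into $X$. Thus every excursion above the ceiling merely returns the game to the cylinder-regime.

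The main obstacle, and the point the whole argument is built around, is that there is \emph{no} global $1$-Lipschitz — indeed no continuous — retraction $\widehat X\to X$: capping $B$ glues a disk along $B$, and a disk cannot be retracted onto its boundary circle, so the clean shadow reduction used elsewhere in the paper is simply unavailable over the cone. The resolution is the two-regime split above, and the hypothesis $h\ge b+d$ is precisely what pays for it: a band of width about $d$ near the cone where $\pi$ is untrustworthy, together with a length $b$ of lead time needed for a single cop to re-lock onto the moving shadow on the closed curve $B$. Verifying the $1$-Lipschitz estimate for $\pi$ below the ceiling, and checking that this lead time genuinely suffices for re-synchronisation on $B$ (a circle rather than an interval, so the cop must account for both directions), are the two steps that require real care; everything else reduces, in each regime, to a result already proved.
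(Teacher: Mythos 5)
Your overall architecture is the same as the paper's: project the robber's cylinder position to $B$ via $\pi$, run the winning $X$-strategy against the shadow, treat an excursion into the cap as a separate regime paid for by the height hypothesis, and finish with Lemma~\ref{lem:cylinder catch}. Your extra care about where $\pi$ is $1$-Lipschitz is not really needed: what matters for the shadow argument is the length of the path the robber actually traverses, and the projection of any path confined to $X\cup C$ is length-nonincreasing; the paper simply splits on whether the robber ever enters $D$.

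There is, however, a genuine gap at the point you yourself flag but do not resolve: you assert that ``the designated cop chases the shadow $\pi(r)$ exactly as in the guarding argument of Lemma~\ref{lem:isometric path}, which needs time at most the length $b$ of the guarded curve to lock onto a $1$-Lipschitz shadow.'' That argument is specific to an isometric \emph{path}, where the shadow is trapped between the cop and an endpoint. On the closed curve $B$ a single cop can never lock onto a $1$-Lipschitz target in any finite time: the target simply runs in one direction at full speed and the gap is preserved forever. The same problem infects your first regime, where for $c(\widehat X)\le c(X)$ the $X$-strategy only brings a cop to within $\varepsilon$ of $\pi(r)$, yet Lemma~\ref{lem:cylinder catch} requires a cop \emph{at} $\pi(y)$. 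The paper closes both holes the same way: it brings a \emph{second} cop onto $B$, splits $B$ into two arcs each of which is isometric in $C\cup D$ (no shortcut over the cone since $2h>b$), and has the two cops squeeze and then exactly catch the moving shadow before guarding $B$ permanently. Since you already have all three cops on $B$ during the descent, your proof is repairable by substituting this two-cop capture of the shadow for the single-cop Lemma~\ref{lem:isometric path} step; as written, though, the lock-on step fails.
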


\begin{proof}
The cops will play the game on $X$ trying to approach (or catch) the robber. If the robber moves into the cylinder to a point $y\in C$, they consider $\pi(y)$ as the position of the robber when applying their strategy. If the robber never enters the disk $D$ used to cap off the cylinder, the strategy will enable the cops to approach the shadow of the robber to within distance $\varepsilon$ (or catch the shadow when we consider $c_0(\widehat{X})$). When this happens, if the robber is at the same point as his shadow, the game is over and the cops win (either the $\varepsilon$-approaching game or catching the robber).

So, we may assume that either the robber is at a point $y\in C$ and there is a cop in $B$ at distance $\varepsilon$ from $\pi(y)$ (with $\varepsilon=0$ when catching), or that the robber is in $D$. In the latter case, the robber is so far from $B$ that we can bring another cop onto $B$ and achieve that two cops guard $B$ (since $B$ is composed of two isometric paths in $C$) before the robber is able to come close to $B$.

We may thus assume that $y\in C$ and there is a cop in $B$ at distance $\varepsilon$ from $\pi(y)$. From now on, the cop can keep this distance (unless the robber enters $D$ in which case $\pi(y)$ becomes undefined). In the continuation of the game we can bring another cop to $B$ and together with the first one they can catch $\pi(y)$. Moreover, they can position themselves on $B$ so that they guard $B$ from the robber entering it.

In any case, now the robber is in the topological disk $C\cup D$ and two cops guard its boundary $B$. Next, we bring the third cop into the cylinder and catch the robber by using the strategy of Lemma \ref{lem:cylinder catch}.
This shows that $c(\widehat X) \le c(X)$ and $c_0(\widehat X) \le c_0(X)$.
\end{proof}

In the proof of Lemma \ref{lem:cylinder1} we could argue that $c(X)$ and $c(\widehat X)$ are equal if $h$ is large enough, but this will be easier to do with a different capping arrangement, which will be described next.

Let us define the \emph{expanding cylinder of height $h$ over $B$} as the cylinder $C$ shown in Figure \ref{fig:expanding cylinder}, whose boundary consists of two concentric circles of radii $r_0=\ell(B)/(2\pi)$ and $r_1=r_0+h$. By \emph{capping this cylinder} we mean adding the Euclidean disk $D$ of radius $r_1$ and identifying the perimeters of $C$ and $D$.

\begin{figure}
  \centering
  \includegraphics[width=11cm]{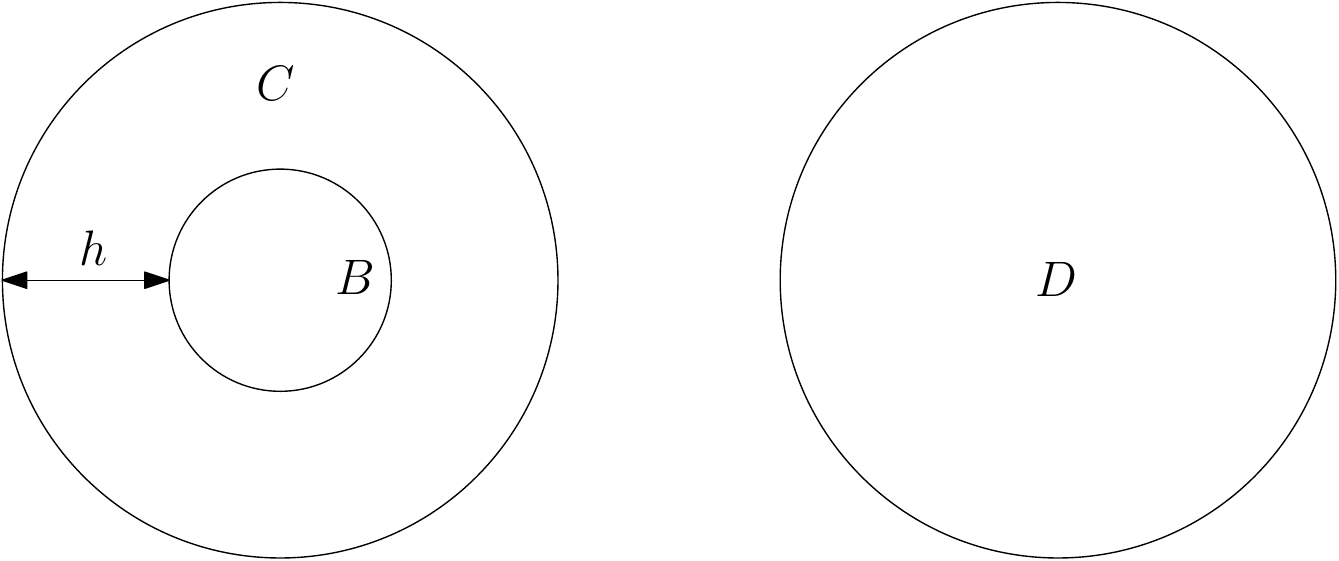}
  \caption{The intrinsic metric in the expanding cylinder of height $h$ over $B$ uses the Euclidean cylinder metric of the cylinder $C$ between the circles of radii $r_0=\ell(B)/(2\pi)$ and $r_1=r_0+h$, and uses the Euclidean distance within a disjoint disk $D$ of radius $(\ell(B)+h)/(2\pi)$ in the cap $D$.}\label{fig:expanding cylinder}
\end{figure}

\begin{lemma}\label{lem:cylinder1expanding}
  Let $X$ be a compact geodesic space with diameter $d$ and let $B\subset X$ be a piecewise geodesic simple closed curve of length $b$ in $X$. Let $\widehat X$ be the geodesic space obtained by adding a capped expanding cylinder $C$ over $B$, whose height is greater or equal to $b+d$. If $c(X)\ge 3$, then $c(\widehat X) = c(X)$ and $c_0(\widehat X) = c_0(X)$.
\end{lemma}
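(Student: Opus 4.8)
The inequalities $c(\widehat X)\le c(X)$ and $c_0(\widehat X)\le c_0(X)$ go exactly as in Lemma~\ref{lem:cylinder1}: the capped expanding cylinder $C\cup D$ is again a topological disk with locally Euclidean interior and boundary $B$, so the cops first chase the $B$-projection $\pi(r)$ of the robber using their $X$-strategy, then bring two of them onto $B$ to guard it (here the hypothesis $c(X)\ge 3$ is used), and finally clear the disk $C\cup D$ with the remaining cop as in Lemma~\ref{lem:cylinder catch}. Thus the real content of the statement is the two \emph{reverse} inequalities, and this is precisely where the expanding shape and the height bound $h\ge b+d$ (with $b=\ell(B)$, $d=\diam(X)$) are needed.

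Two geometric facts will drive the reverse direction. First, $X$ is isometric in $\widehat X$: any path between two points of $X$ that dips into the cap must enter and leave through $B$, and the cheapest route through the cap between two points of $B$ is along the inner arc of the annulus, whose length is at least their $X$-distance; replacing each cap excursion by the corresponding $X$-geodesic never lengthens a path, so $d_{\widehat X}$ and $d_X$ agree on $X$. Second, the map $\pi\colon X\cup C\to X$ that is the identity on $X$ and sends an annulus point at angle $\theta$ to the point of $B$ at angle $\theta$ is $1$-Lipschitz on all of $X\cup C$ (the radial coordinate only adds to distances, and projecting the angular coordinate from radius $\rho\ge r_0$ down to $r_0$ is contracting). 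The single place where $\pi$ is unavailable is the cap disk $D$, every point of which lies at depth at least $h\ge b+d>d$ below $B$.

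The plan for the reverse inequalities is a robber simulation. Suppose the robber can win (resp.\ avoid capture) against $m=c(X)-1$ cops on $X$, staying farther than some $\delta>0$ from $m$ virtual cops moving under the $X$-rules. On $\widehat X$ the robber never leaves $X$ and feeds his $X$-strategy a virtual position $v_i\in X$ for each cop $C_i$: if $C_i\in X$ then $v_i=C_i$, and if $C_i$ lies anywhere in the annulus $C$ then $v_i=\pi(C_i)\in B$. The only invariant I must keep is $d_X(r,v_i)\le d_{\widehat X}(r,C_i)$ together with $v_i$ moving at most one step length per step; granting these, the robber stays more than $\delta$ from every $v_i$, hence (by the invariant) more than $\delta$ from every real cop, so he wins on $\widehat X$ and $c(\widehat X)\ge c(X)$ follows. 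In both regimes above the invariant is immediate (isometry of $X$, and $1$-Lipschitzness of $\pi$), and $v_i$ moves legally because $\pi$ is $1$-Lipschitz.

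The hard part will be the regime in which $C_i$ wanders into the cap disk $D$, where $\pi$ is useless and the foot of $C_i$ on $B$ can swing arbitrarily (it is undefined at the centre of $D$). This is exactly what the depth bound tames. While $C_i$ sits in $D$ its distance to the robber is at least $h>d\ge d_X(r,v_i)$ for \emph{every} choice $v_i\in X$, so the invariant holds no matter where the virtual cop is parked and I simply freeze $v_i$. When $C_i$ re-enters the annulus at angle $\theta_{\mathrm{re}}$ (at depth $h$), the robber learns $\theta_{\mathrm{re}}$ and begins walking $v_i$ along $B$ toward the point of angle $\theta_{\mathrm{re}}$, resynchronising it to $\pi(C_i)$ before $C_i$ descends to depth $d$; throughout that descent the invariant survives because the real distance is still at least the current depth $\ge d$, and the descent lasts at least $h-d\ge b$ time units, enough to absorb the at most $b/2$ of arc that must be corrected (against a target $\pi(C_i)$ that itself moves $1$-Lipschitz, so that the angular budget the cop spends is traded against the radial budget it needs to descend). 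Verifying that every virtual move is legal and that the invariant never lapses across these regime changes is the one delicate bookkeeping step, and it rests entirely on $h\ge b+d$. Reading ``distance $0$'' in place of ``distance $\delta$'' throughout, the same simulation yields $c_0(\widehat X)\ge c_0(X)$, so both quantities are equal.
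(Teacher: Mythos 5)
Your forward direction ($c(\widehat X)\le c(X)$, $c_0(\widehat X)\le c_0(X)$) matches the paper, and your overall plan for the reverse direction --- a robber simulation driven by $1$-Lipschitz shadows of the cops, with the shadow frozen while a cop is in the cap $D$ (where its distance to the robber exceeds $h>d\ge\diam(X)$, so the invariant is free) --- is also the paper's plan. The gap is in the one step you defer: the resynchronisation after re-entry. Your accounting ``the descent lasts at least $h-d\ge b$ time units, enough to absorb the at most $b/2$ of arc'' does not close as stated, because the trade-off between the cop's angular and radial speed is Pythagorean, not additive. Writing the cop's unit-speed motion in the flat annulus as $\rho'^2+\rho^2\theta'^2=1$, the foot $\pi(C_i)$ moves at speed $u=\tfrac{r_0}{\rho}\sqrt{1-\rho'^2}$, and if you only use $u\le\sqrt{1-\rho'^2}$ then a cop can descend by $h-d=b$ in time $T=2b$ while $\int u\,dt=\sqrt{T^2-b^2}=\sqrt3\,b$, so your chaser gains only $T-\sqrt3\,b=(2-\sqrt3)b<b/2$ and the gap need not close before depth $d$. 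What rescues the claim is precisely the contraction factor $r_0/\rho$ you mention elsewhere but do not deploy here: the pointwise inequality $1-\tfrac{r_0}{\rho}\sqrt{1-w^2}\ge |w|\sqrt{1-r_0^2/\rho^2}$ gives $\int_0^T(1-u)\,dt\ge\int_{r_0+d}^{r_0+h}\sqrt{1-r_0^2/\rho^2}\,d\rho$, and one checks (separately for $d\ge r_0$ and $d<r_0$, using $h\ge b+d=2\pi r_0+d$) that this integral exceeds $b/2=\pi r_0$. So your lemma is true and your route is salvageable, but the ``delicate bookkeeping step'' genuinely requires this computation; as sketched it would fail.

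For comparison, the paper avoids the dynamic chase entirely: upon re-entry at $y\in C\cap D$ with frozen shadow $x\in B$, it replaces the radial projection by a single \emph{twisted} projection $\psi:C\to B$ (rotate the circle of radius $\rho$ by an angle interpolating from $0$ on $B$ to $\alpha$ on the outer circle, then project radially), chosen so that $\psi(y)=x$ and $\psi|_B=\mathrm{id}$; the shadow is then $\psi(C_i)$ from that moment on and is automatically legal and consistent. The price is the same analytic fact in disguise --- one must verify that a total twist of up to half a turn can be distributed over radial distance $h$ so that $\psi$ is $1$-Lipschitz, which again comes down to the factor $\sqrt{1-r_0^2/\rho^2}$ --- but all the bookkeeping across regime changes disappears. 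Either way, you should either carry out the integral estimate above or switch to the static twisted projection; as written, the step you flag as delicate is the one place where your argument is not yet a proof.
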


\begin{proof}
We prove that $c(\widehat X) \le c(X)$ and $c_0(\widehat X) \le c_0(X)$ in the same way as in Lemma \ref{lem:cylinder1}.

\begin{figure}
  \centering
  \includegraphics[width=7.5cm]{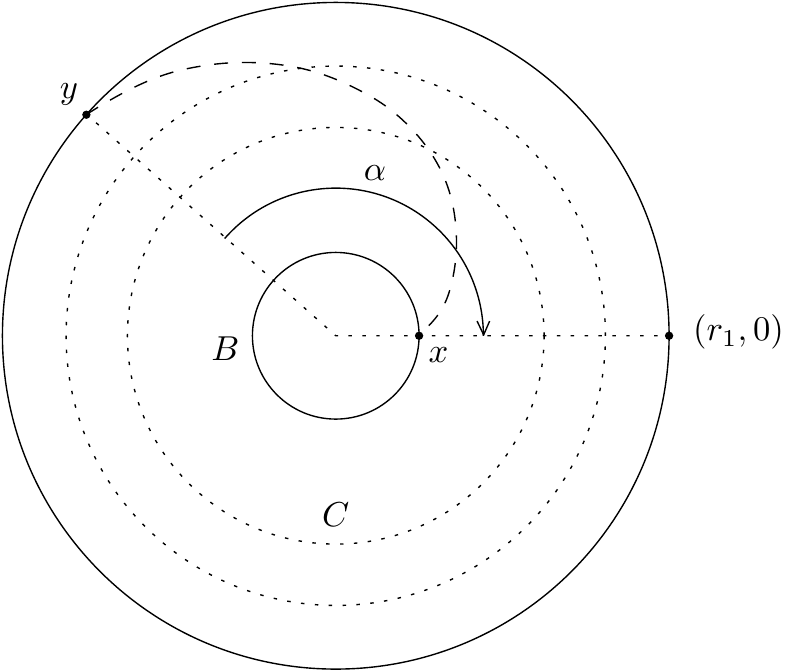}
  \caption{The schematics showing the 1-Lipschitz mapping $\psi:C\to B$ such that $\psi(y)=x$. The points that lie on the depicted dashed curve leading from $y$ to $x$ are all mapped to $x$. The shown dotted circles are first rotated by angle $\tfrac{3}{4}\alpha$ and $\tfrac{1}{2}\alpha$, respectively, and then radially projected onto $B$.}\label{fig:rotate by alpha}
\end{figure}

The converse inequality will be established by considering the robber, playing the game in $\widehat X$, mimicking his strategy in $X$. By this strategy, the robber will never enter the capped disk. There is an obvious 1-Lipschitz mapping $C\to B$ which will define the shadow of any cop in $C$. The robber will in his strategy stay away from the cops in $X$ and away from their shadows. The only nontrivial detail is what to do if a cop enters $D$. In that case, the goal of the cop must be to eventually come back into $X$ (or at least $\varepsilon$-close to $X$) in order to help catching the robber. When the cop enters $D$, his shadow $x\in B$ will be ``frozen'' until he returns to $C$ again; let $y\in C\cap D$ be the point of his entry into $C$. In order to use the shadow-escape strategy of the robber, it now suffices to see that for every point $y\in C\cap D$ and every point $x\in B$, there is a 1-Lipschitz map $\psi:C\to B$, which is identity on $B$ and satisfies $\psi(y)=x$. To see this, we may assume that the center of $C$ in the plane is the point $(0,0)$ and that $x=(r_0,0)$ and $y=(a,b)$, where $b\ge0$. The mapping $\psi$ rotates the circle $C\cap D$ clockwise by angle $\alpha$ (see Figure \ref{fig:rotate by alpha}), so that $y$ moves to the point $(r_1,0)$, and then radially projects the circle onto $B$. For points on any intermediate circle of radius $r$ ($r_0\le r\le r_1$), we do the rotation by the angle $\frac{r-r_0}{r_1-r_0}\alpha$ and then project the circle onto $B$. Now it is easy to see if $h=r_1-r_0$ is sufficiently large ($h\ge b$ suffices), the mapping $\psi$ defined this way is 1-Lipschitz. The details are left to the reader.
\end{proof}

\begin{theorem}
 Suppose that $X=X(G,w)$ is a metric graph that can be topologically 2-cell embedded into a surface $S$ and that $c(G)\ge3$.
 Then there is a polyhedral metric surface $\hat S$ homeomorphic to $S$, such that $c(\hat S) = c(X)$ and $c_0(\hat S) = c_0(X)$.
\end{theorem}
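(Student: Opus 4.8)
The plan is to build $\hat S$ out of $X$ by filling in the $2$-cells of the given embedding one at a time, each time by attaching a capped expanding cylinder over the boundary of the cell, so that every filling step preserves both cop numbers via Lemma \ref{lem:cylinder1expanding}. The first thing to record is that the hypothesis keeps that lemma applicable throughout: by Theorem \ref{thm:metric graph} we have $c(X)\ge c(G)\ge 3$, and since each capping leaves the cop number unchanged, the space produced at every stage still has cop number $c(X)\ge 3$, which is exactly the condition required by Lemma \ref{lem:cylinder1expanding}.

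Concretely, fix the $2$-cell embedding of $X$ in $S$ (take $S$ closed; the bounded case is analogous, with the boundary components inherited from $X$) and let $f_1,\dots,f_m$ be its faces. Each $f_i$ is bounded by a closed walk $B_i$ in $X$, and since every edge of $X(G,w)$ is an isometric interval and hence locally isometric, $B_i$ is a piecewise geodesic loop made of finitely many edges. I would process the faces in any order: having already filled $f_1,\dots,f_{i-1}$ to obtain a compact geodesic space $X_{i-1}$ of finite diameter $d_{i-1}$, I attach a capped expanding cylinder over $B_i$ of height at least $\ell(B_i)+d_{i-1}$. Topologically this glues a disk (the cone $D$, collared by the cylinder $C$) along the attaching map $B_i$, which is precisely how $f_i$ is attached in the embedding; hence after all $m$ steps $\hat S$ is homeomorphic to $S$. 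It is polyhedral because the cylinder over the finitely-edged loop $B_i$ is a union of Euclidean rectangles and the cap is the cone over $B_i$, a union of Euclidean triangles from the apex (subdividing beforehand any edge of $B_i$ that is too long, so every cone triangle obeys the strict triangle inequality). Applying Lemma \ref{lem:cylinder1expanding} at each step, using $c(X_{i-1})\ge 3$, gives $c(X_i)=c(X_{i-1})$ and $c_0(X_i)=c_0(X_{i-1})$, and hence $c(\hat S)=c(X)$ and $c_0(\hat S)=c_0(X)$.

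The point requiring care, and the main obstacle, is that Lemma \ref{lem:cylinder1expanding} is stated for a \emph{simple} closed curve $B$, whereas a face boundary $B_i$ need not be simple; this is unavoidable in general, since a bridge or a pendant subgraph of $G$ forces the incident face to traverse an edge twice, and such $G$ can still satisfy $c(G)\ge 3$. I would resolve this by extending the capping argument to closed walks, observing that both Lemma \ref{lem:cylinder1} and Lemma \ref{lem:cylinder1expanding} use simplicity only \emph{inside} the attached cell. Attach the cylinder as $C=S^1_{\ell}\times[0,h]$ with $\ell=\ell(B_i)$, gluing its bottom circle to $X_{i-1}$ through the arc-length loop $B_i\colon S^1_{\ell}\to X_{i-1}$. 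Inside the cell $C\cup D$ the bottom circle is a genuine simple closed curve of length $\ell$, so the two-cop argument that catches and then maintains the shadow on it (two cops trap the $1$-Lipschitz-moving shadow on the parameter circle $S^1_{\ell}$ and one then stays on it) goes through exactly as before, and the projection $C\to S^1_{\ell}$ together with the $1$-Lipschitz rotation map of Figure \ref{fig:rotate by alpha} compose with $B_i$ to give the $1$-Lipschitz maps into $X_{i-1}$ used in the robber's shadow-escape strategy. The delicate bookkeeping is at a vertex that $B_i$ visits several times: there the rotation system of the embedding is what guarantees that the attached cell completes the local picture to a disk, so $\hat S$ stays a genuine polyhedral surface and the cops' motion along $S^1_{\ell}$ descends to a legitimate walk in $X_{i-1}$. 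With this reading of the capping lemma the iteration carries over verbatim, and (when every $B_i$ happens to be simple it is of course immediate), completing the proof.
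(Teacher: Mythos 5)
Your proposal is correct and follows essentially the same route as the paper: cap each facial walk of the 2-cell embedding with a capped expanding cylinder of sufficient height and invoke Lemma \ref{lem:cylinder1expanding} (with $c(X)\ge c(G)\ge 3$ from Theorem \ref{thm:metric graph}), including the key observation that facial walks need not be simple but the capping lemma still applies because all distances along $B$ are measured along the curve itself. The only cosmetic difference is that you attach the cylinders one face at a time with growing heights, while the paper caps all faces simultaneously using the single height $h=D+p$.
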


\begin{proof}
  The proof uses the following construction. Let $D$ be the diameter of the metric graph $X(G,w)$ and let $p$ be the maximum length of a facial walk in the 2-cell embedding of $G$ in $S$. By the length we mean the sum of the lengths of the edges participating in the facial walk. Set $h = D + p$.

  Now take the metric graph $X(G)$ and cap each facial walk $B$ with an expanding capped cylinder over $B$ of height $h$. This yields a metric surface $\hat S$ homeomorphic to $S$ in which $X$ is an isometric subset. Note that $B$ is not necessarily a simple closed curve, but Lemma \ref{lem:cylinder1expanding} still applies because in that lemma we have considered distances in the capped cylinder only and have assumed that the distances in $B$ are along the curve. This was in fact used by both, the cops and the robber, when they have mimicked the strategy from $X$.
\end{proof}

\begin{corollary}\label{cor:lower bound sqrt}
For every $g\ge0$ (and every $k\ge0$) there is an orientable (non-orientable) geodesic surface $X$ of genus $g$ (and with $k$ boundary components) such that $c(X)\ge g^{1/2-o(1)}$, where the asymptotics of $o(1)$ is considered for $g\to\infty$.
\end{corollary}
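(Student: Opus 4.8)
The plan is to feed graphs of large cop number but tightly controlled genus into the two transfer principles established above. By Theorem~\ref{thm:metric graph} we have $c(X(G))\ge c(G)$ for every graph $G$, and by the theorem immediately preceding this corollary, if $c(G)\ge3$ and $G$ is cellularly (2-cell) embedded in a surface $S$, then there is a polyhedral surface $\hat S\cong S$ with $c(\hat S)=c(X(G))\ge c(G)$, inside which $X(G)$ sits isometrically. So it suffices to produce, for arbitrarily large $g$, a graph $G$ together with a cellular embedding into a surface of genus $g$ such that $c(G)\ge g^{1/2-o(1)}$.

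The genus is governed by sparsity. A minimum-genus embedding of a connected graph is cellular, and Euler's formula bounds both the minimum and the maximum orientable genus by the cycle rank $\beta(G)=|E(G)|-|V(G)|+1$; thus a graph with $m$ edges embeds in some orientable surface of genus $O(m)$. Hence the whole statement reduces to the purely combinatorial task of finding graphs with $c(G)=k\to\infty$ and only $k^{2+o(1)}$ edges. I would emphasize why the obvious route fails: the Aigner--Fromme bound $c(G)\ge\delta(G)$ for girth at least $5$, combined with the Moore bound, forces $\Omega(k^2)$ vertices and hence $\Omega(k^3)$ edges, which yields only $g^{1/3}$. The graphs I need must instead derive their large cop number from global expansion rather than from local degree.

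Such graphs are supplied by the theory of Cops and Robber on random graphs: at suitably chosen sparse densities, $G(n,p)$ with $p=n^{-1+1/(2j)}$ has, with high probability, $c(G(n,p))=n^{1/2-o_j(1)}$ while carrying $n^{1+1/(2j)}$ edges in expectation (the zigzag theorem of \L{}uczak and Pra\l{}at). Letting $j=j(n)\to\infty$ slowly, I obtain graphs $H$ with $k:=c(H)=n^{1/2-o(1)}$ and $|E(H)|=n^{1+o(1)}=k^{2+o(1)}$, hence of orientable genus $O(k^{2+o(1)})$. Embedding such an $H$ at any genus $g$ inside its (interval) genus range gives $g\le|E(H)|=k^{2+o(1)}$, so $c(\hat S)\ge c(H)=k\ge g^{1/2-o(1)}$, as required.

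It then remains to hit an arbitrary prescribed genus and to install $k$ boundary components, which is routine topological bookkeeping. To realize the genus exactly I would use the interval property of the genus range together with the additivity of the minimum genus over blocks: attaching copies of $K_5$ at cut vertices (each raising the minimum genus by $1$ and having cop number $1$) tunes the genus to any prescribed $g\ge\gamma_{\min}(H)$ while keeping the cop number equal to $\max(c(H),1)=c(H)$, since the cop number of a connected graph is the maximum of the cop numbers of its blocks; choosing $n$ so that $\gamma_{\min}(H)\le g$ while still $c(H)\ge g^{1/2-o(1)}$ is possible because the jumps of $\gamma_{\min}$ between consecutive $H$ are only $g^{o(1)}$. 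Boundary components are created by deleting $k$ disjoint open disks from the interiors of the capping cones, far from $X(H)$; as the robber's evasion strategy lives entirely on the isometric copy of $X(H)$, this does not lower the cop number (cf.\ Lemma~\ref{lem:cylinder1expanding}). The nonorientable case is handled identically, using a nonorientable cellular embedding of $H$ and adjusting the crosscap number analogously. The hard part is entirely the combinatorial input of the third paragraph---sparse graphs with near-square-root cop number---and it is precisely the $n^{o(1)}$ gap between edges and vertices there that produces the $g^{o(1)}$ loss in the exponent.
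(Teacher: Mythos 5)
Your overall architecture matches the paper's: transfer a family of sparse graphs with large cop number into metric graphs via Theorem~\ref{thm:metric graph}, then into polyhedral surfaces via the capping construction of Lemma~\ref{lem:cylinder1expanding} and the theorem preceding the corollary, with the genus controlled by the edge count through Euler's formula. Where you genuinely diverge is in the combinatorial input. The paper takes the subcubic graphs of Hosseini et al.\ \cite{GoHoMo21}, which have order $n$, at most $3n/2$ edges, and cop number $n^{1/2-o(1)}$; linear edge count means genus $O(n)$, so the exponent $1/2-o(1)$ transfers in one line with no further limiting argument. You instead invoke the \L{}uczak--Pra\l{}at zigzag theorem for $G(n,p)$ at densities $p=n^{-1+1/(2j)}$, which buys you a well-known off-the-shelf source of examples but saddles you with $n^{1+1/(2j)}$ edges, i.e.\ a genus overhead of $n^{1/(2j)}$ that must be driven to $n^{o(1)}$.

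That overhead is the one soft spot. The zigzag theorem is proved for a fixed exponent $\alpha\in(\tfrac{1}{2j+1},\tfrac{1}{2j})$ with $j$ constant; the statement ``letting $j=j(n)\to\infty$ slowly'' is not covered by the result as it stands, since the implicit constants and the $o(1)$ terms depend on $j$, and the regime $np=n^{o(1)}$ is precisely where the lower bound becomes delicate. You do not need this, though: for each fixed $\epsilon>0$ and each fixed large $j$ you get, for all sufficiently large $g$, surfaces of genus $g$ with $c(X)\ge g^{1/2-\epsilon}$, and a diagonalization over $\epsilon=1/m$ (choosing thresholds $g_m$ and setting the $o(1)$ function to be $1/m$ on $[g_m,g_{m+1})$) yields the stated $g^{1/2-o(1)}$. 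Phrase it that way and the argument closes. Your bookkeeping for hitting the exact genus (interval property of the genus range plus block-additivity, padding with $K_5$ blocks, cop number being the maximum over blocks) and for installing $k$ boundary components by puncturing the caps is correct and in fact more explicit than the paper, which leaves both points to the reader.
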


\begin{proof}
  The proof will only give a surface of genus at most $g$, but it is not hard to change it to genus exactly $g$ (which is left to the reader).

  Hosseini et al.~\cite{GoHoMo21} proved that for every sufficiently large $n$, there is a subcubic\footnote{A graph is \emph{subcubic} if all vertices have degree at most 3.} graph $G_n$ of order $n$, whose cop number is $c(G_n) = \Theta(n^{1/2-o(1)})$. Since a subcubic graph of order $n$ has at most $3n/2$ edges, it can be 2-cell embedded in a surface (orientable or nonorientable) of genus $g' \le n/2$. Lemma \ref{lem:cylinder1expanding} shows that there is a capped surface of genus $g'$ whose cop number is $\Theta(n^{1/2-o(1)})$. This completes the proof.
\end{proof}

\subsection{Upper bounds}

Concerning graphs that are embeddable in a surface of genus $g$, it was proved by Schroeder \cite{Schr01} that their cop number is at most $\tfrac{3}{2}g+3$. He conjectured that the constant $\tfrac{3}{2}$ can be replaced by $1$. The currently best bound towards the Schroeder Conjecture was obtained recently by Bowler, Erde, Lehner, and Pitz \cite{BoErLePi21}, who proved that Schroeder's bound can be improved to $\tfrac{4}{3}g+3$. Another, asymptotic improvement has been announced in an extended abstract at EuroComb 2019 \cite{BoErLePi19}.

We will not try to optimize the genus bounds as precisely as in the above-mentioned works since the author believes that the linear bound given in our next theorem is far from best possible (asymptotically). But we give a simple linear upper bound.

\begin{theorem}\label{thm:upper bound linear}
  Let $X$ be a compact geodesic surface of genus $g\ge1$ (homeomorphic to $S_{g,k}$ or $N_{g,k}$ for some $k\ge0$). Then $c(X) \le 2g+1$.
\end{theorem}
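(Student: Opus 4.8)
The plan is to prove the bound by induction on the genus $g$, reducing the nonorientable case to the orientable one at the very start. For a nonorientable surface $N_{g,k}$ I would pass to its orientable double cover, which is an orientable surface of genus $g-1$ with $2k$ boundary components; exactly as in Corollary~\ref{cor:genus0and1}, any winning strategy for the cops on the cover yields one on $X$ by lifting the initial position and the robber's moves. Since $2(g-1)+1 = 2g-1 \le 2g+1$, it then suffices to establish the bound for orientable surfaces $S_{g,k}$, where I induct on $g$. The base case is $g=1$, which is precisely Theorem~\ref{thm:genus 1} (yielding $c(X)\le 3 = 2\cdot 1 + 1$), with genus $0$ covered by Theorem~\ref{thm:genus 0}.

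For the inductive step with $g\ge 2$, the idea is to spend exactly two cops to guard one noncontractible simple closed geodesic permanently, thereby cutting the genus down by one. Let $\gamma$ be a shortest non-separating simple closed curve in $X$; being a length-minimizer among such curves, $\gamma$ is automatically a simple closed geodesic. Write $L=\ell(\gamma)$ and choose two points $p,q\in\gamma$ splitting it into arcs $A_1,A_2$ of length $L/2$. I claim each $A_i$ is an isometric path. Indeed, if some $(p,q)$-geodesic $\sigma$ had length less than $L/2$, then $\sigma\cup A_1$ and $A_2\cup\overline\sigma$ would be closed curves of length strictly less than $L$ whose homology classes sum to $[\gamma]\ne 0$; hence at least one of them is non-separating and shorter than $\gamma$, contradicting minimality. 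Therefore, by Lemma~\ref{lem:isometric path}, one cop can guard each of $A_1$ and $A_2$, and together these two cops guard all of $\gamma=A_1\cup A_2$, so the robber can never touch $\gamma$ without being caught in the same step.

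Once $\gamma$ is guarded, the robber is confined to $X\setminus\gamma$, and I would analyse the game on the surface $X'$ obtained by cutting $X$ along $\gamma$. Since $\gamma$ is non-separating and two-sided, $X'$ is a connected orientable geodesic surface with $k+2$ boundary components, and an Euler-characteristic computation gives its genus as $g-1$. The cut map $X'\to X$ is a local isometry, so every legal move of a player in $X'$ is a legal move in $X$ of the same length, while a robber who refuses to cross the guarded curve $\gamma$ is exactly a robber playing in $X'$. Thus the two guarding cops reduce the game on $X$ to the game on $X'$; by the induction hypothesis, a further $2(g-1)+1 = 2g-1$ cops suffice there, giving in total $c(X)\le 2 + (2g-1) = 2g+1$. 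To keep the argument rigorous I would run everything inside the $\varepsilon$-approaching framework and invoke Corollary~\ref{cor:epsilon-approaching suffices} at the end.

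The main obstacle I anticipate is the careful handling of the metric after cutting. The intrinsic distance in $X'$ can strictly exceed that in $X$, since a shortest $X$-path between two points of $X'$ may cross $\gamma$; I must therefore argue explicitly that this discrepancy only helps the cops. Concretely, the induction hypothesis supplies a cop strategy phrased in $X'$-distances, and I need to verify both that these moves stay legal when executed in $X$ and that a captured (or $\varepsilon$-approached) configuration in $X'$ genuinely certifies capture in $X$. A secondary delicate point is the existence and geodesic character of the shortest non-separating curve together with the midpoint-splitting step above, which I would formulate via $\sys(X)$-type minimization; the homological observation that one of the two shortened loops remains non-separating is the crux that makes the splitting into two isometric arcs valid.
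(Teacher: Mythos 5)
Your proposal is correct and follows essentially the same route as the paper: the paper's proof likewise guards a shortest noncontractible geodesic (an essential systole, written as the union of two isometric paths) with two cops via Lemma~\ref{lem:isometric path}, cuts along it to reduce the genus, and concludes by induction, with the $g=1$ base case supplied by Corollary~\ref{cor:genus0and1}. Your write-up merely makes explicit several details the paper leaves as a sketch, namely the double-cover reduction for nonorientable surfaces, the midpoint-splitting homology argument showing the two arcs are isometric, and the comparison of the intrinsic metrics before and after cutting.
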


\begin{proof}
  If $g=1$, we either have the torus $S_{1,k}$ or the projective plane $N_{1,k}$. The bound for this case has been proved earlier (see Corollary \ref{cor:genus0and1}).

  If $g>1$, we take an essential systole of the capped surface, which is a geodesic noncontractible simple closed curve in $X$ and can be written as the union of two isometric paths in $X$. By using two cops to guard the systole, we essentially cut the surface into a geodesic surface of smaller genus, and the result follows by induction.
\end{proof}

The natural question is whether the linear upper bound of Theorem \ref{thm:upper bound linear} or the $\sqrt{g}$ lower bound of Corollary \ref{cor:lower bound sqrt} gives the right asymptotics. In parallel to a conjecture of the author about graphs of genus $g$, we propose the following generalized conjecture.

\begin{conjecture}
  Let\/ $X$ be a geodesic surface of genus $g$. Then $c(X) = O(\sqrt{g})$.
\end{conjecture}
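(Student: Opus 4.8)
Since the final statement is stated by the author as a \emph{conjecture} that generalizes the (also open) conjecture for abstract graphs of genus $g$, what I can offer is a plan of attack together with an identification of the essential obstruction. The overall strategy I would pursue is a recursive genus reduction in which, at every stage, the robber is confined to a geodesic subsurface of roughly half the genus while only $O(\sqrt{g}\,)$ cops are ever deployed at once. The elementary move is already in hand: by Lemma \ref{lem:isometric path} one cop guards any isometric path, and a \emph{separating} simple closed geodesic is a union of two isometric paths, so two cops guarding it confine the robber to one side. If one could always locate, inside the robber's current region, a balanced separating simple closed geodesic (splitting the genus as evenly as possible) and recurse on the side containing the robber, then once the genus is driven down to $0$ or $1$ the three-cop strategies of Theorems \ref{thm:genus 0} and \ref{thm:genus 1} would finish the $\varepsilon$-approaching game.

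First I would set up the bookkeeping. A region reached after $t$ cuts is a geodesic subsurface $X_t$ of genus $\gamma_t \le g/2^{t}$ bounded by a family of already-guarded simple closed geodesics, and the number of cops in play equals twice the number of boundary curves of $X_t$ that must stay guarded, plus the $O(1)$ cops performing the current cut and the eventual planar capture. The whole problem thus reduces to controlling how the number of \emph{simultaneously guarded} curves grows along the recursion. To reach $O(\sqrt{g}\,)$ rather than the naive $O(g)$ of Theorem \ref{thm:upper bound linear}, I would try to organize the handles of $X$ into a grid-like pattern — a cut system whose curves meet in an approximately $\sqrt{g}\times\sqrt{g}$ incidence pattern — and then sweep a ``line'' of about $\sqrt{g}$ guarded geodesics across the surface, exactly as one sweeps a planar grid. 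A balanced sweep of this type keeps the active front at size $O(\sqrt{g}\,)$ while halving the robber's territory, so $O(\log g)$ sweeps suffice and the peak number of cops is $O(\sqrt{g}\,)$.

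The main obstacle, and the reason the conjecture is open, is that every curve used in the recursion must be guardable by \emph{few} cops, i.e.\ it must be a \emph{simple} closed geodesic so that it splits into two isometric paths. On the extremal surfaces of Corollary \ref{cor:lower bound sqrt}, which are built from expander-like cubic graphs, the geodesic representatives of balanced separating classes are forced to be long and to have many self-intersections, and a self-intersecting closed geodesic is not a union of two isometric paths, so guarding it costs a number of cops that grows with its combinatorial complexity. Equivalently, the shortest \emph{simple} separating curve is far longer and more tangled than the systole $\sys(X)$, so no cheap balanced cut is available. That this simplicity constraint is the crux can be seen by a tension check: were balanced simple separating geodesics always cheaply available, the recursion above would even yield $O(\log g)$ cops, contradicting the $\Omega(\sqrt{g}\,)$ lower bound of Corollary \ref{cor:lower bound sqrt}. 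Making the sweep rigorous therefore demands a genuinely new structural theorem — a decomposition of an arbitrary genus-$g$ geodesic surface into pieces glued along a family of $O(\sqrt{g}\,)$ simple closed geodesics with a bounded, grid-like intersection and nesting pattern — and I expect this decomposition to be the hard step; everything after it (guarding the front, recursing, and invoking the base case) should follow the scheme already developed for genus $0$ and $1$.

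As a more tractable first milestone, I would aim to establish the bound $c(X)=O(\sqrt{g}\,)$ precisely for the family of surfaces produced in Corollary \ref{cor:lower bound sqrt}, thereby proving that the lower-bound construction is asymptotically sharp even if the general case resists; for those surfaces the handle structure is inherited from an explicit embedded graph, so a grid-like system of guardable geodesics should be directly constructible. In parallel I would push on the author's companion conjecture for abstract graphs via Theorem \ref{thm:metric graph}, since a resolution there is very likely to supply the separator-control tools needed to realize the grid-sweep heuristic in the purely metric setting.
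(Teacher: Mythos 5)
This statement is posed in the paper as an open \emph{conjecture}; the paper contains no proof of it, and you were right not to manufacture one. Your proposal is therefore not comparable to any argument in the paper --- what can be assessed is whether your plan of attack is consistent with the paper's machinery and whether its individual claims are sound. Broadly it is: the recursive genus-reduction scheme, the use of Lemma \ref{lem:isometric path} to guard curves, the appeal to Theorems \ref{thm:genus 0} and \ref{thm:genus 1} as the base case, and the ``tension check'' against Corollary \ref{cor:lower bound sqrt} (which correctly rules out any strategy that would give $O(\log g)$ cops on all surfaces, and hence shows that cheap balanced cuts cannot always exist) are all in the spirit of the paper's Theorem \ref{thm:upper bound linear} and its lower-bound construction. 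Your proposed milestone --- proving the $O(\sqrt{g}\,)$ bound on the expander-based surfaces of Corollary \ref{cor:lower bound sqrt} --- is also a reasonable first target.

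One technical claim in your plan is, however, wrong and happens to sit exactly at the crux. You assert that a \emph{separating simple closed geodesic} is a union of two isometric paths and hence guardable by two cops. Simplicity plus being a geodesic (a locally isometric curve) does not give this: a long simple closed geodesic can have points whose distance in $X$ is far smaller than their distance along the curve, in which case neither half is an isometric path and Lemma \ref{lem:isometric path} does not apply. What makes the paper's Theorem \ref{thm:upper bound linear} work is that the curve cut there is an \emph{essential systole}, i.e.\ a length minimizer in its homotopy class; minimality is what forces each of its two half-arcs to be isometric (a shortcut would produce a shorter noncontractible curve). So the correct dichotomy underlying the difficulty is not ``simple versus self-intersecting'' but ``shortest-in-class versus not'': on the extremal surfaces, balanced separating classes may have no representative that is short relative to its class, and then even a simple geodesic representative is unguardable by $O(1)$ cops. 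Any rigorous version of your grid-sweep would need a decomposition into curves that are (approximately) length-minimizing in their classes, or a new guarding lemma for curves that are only quasi-isometrically embedded, and you should state the needed structural theorem in those terms rather than in terms of simplicity alone.
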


\section{Where to go next?}

The main question that arises from our work is which geometric properties force the cop number to be large. Here we propose a very basic conjecture.

\begin{conjecture}
  Suppose that $X$ is an $n$-dimensional simplicial {pseudomanifold\/}\footnote{By a simplicial pseudomanifold we mean a simplicial complex in which each simplex is contained in an $n$-simplex, and each $(n-1)$-simplex is contained in at most two $n$-simplices.}, whose homology group $H_i(X)$ has rank $r_i$ for $i=1,\dots,n$. Then
  $c(X) = O(n\sqrt{r_1+\cdots+r_n}\,)$.
\end{conjecture}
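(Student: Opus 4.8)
The plan is to generalize the two mechanisms that drive the surface proofs: the shadow (retraction) strategy built from $1$-Lipschitz maps, and the inductive ``cut along a guarded separator'' argument of Theorem~\ref{thm:upper bound linear}. First I would replace $X$ by a sufficiently fine geodesic triangulation so that, up to $\varepsilon$, the game becomes a combinatorial game on the $1$-skeleton, and fix a CW/handle (discrete Morse) decomposition that exposes the homology --- ideally one whose number of critical $i$-cells is $O(r_i)$. The guarding engine is the framework introduced earlier: whenever a subcomplex $Y\subseteq X$ is the fixed-point set $\Phi(\sigma)$ of a $1$-Lipschitz retraction $\sigma\colon X\to Y$, one cop can permanently guard $Y$ after $c(Y)$ cops have been used to catch the shadow in $Y=\sigma(X)$ (Lemma~\ref{lem:isometric path} is the codimension-one prototype, where $Y$ is an isometric path guarded by a single cop). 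This is what I would use to wall off homology one dimension at a time.

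The key steps, in order. (1) For each independent class of $H_i(X)$ I would produce a guardable codimension-one wall: a separating $(n-1)$-dimensional subcomplex carrying the corresponding (co)homology and admitting a $1$-Lipschitz retraction onto it --- the higher-dimensional analog of an essential systole, which for surfaces is a union of two isometric paths guarded by two cops. (2) Cut $X$ along a guarded wall $Y$; by Mayer--Vietoris/excision bookkeeping the cut strictly decreases the total Betti number $r_1+\cdots+r_n$, while the number of cops needed to maintain it is governed by $c(Y)$ for the $(n-1)$-dimensional wall. This gives a recursion in both the dimension $n$ and the homological complexity. (3) Iterate until every surviving piece is simply connected (all positive Betti numbers killed) and then finish with a fixed number of cops via a cage argument, exactly as Corollary~\ref{cor:genus0and1} does for the sphere and torus.

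To reach the claimed shape $O(n\sqrt{r_1+\cdots+r_n}\,)$ rather than the naive bound that is linear in $\sum r_i$, I would not cut one wall at a time but batch them: group the $\approx\sum r_i$ walls into $\approx\sqrt{\sum r_i}$ sheets, guard an entire sheet simultaneously, and use a Meyniel-type isoperimetric estimate to argue that after $O(\sqrt{\sum r_i})$ simultaneous cuts the remaining region is homologically trivial. The factor $n$ is the combined cost of sweeping through the $n$ homological dimensions and of guarding an $(n-1)$-dimensional wall, whose own cop number (bounded by the same conjecture one dimension down) supplies the dimension telescope through the recursion of step~(2).

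The main obstacle is precisely the square root: upgrading the linear $O(\sum r_i)$ cutting bound --- which the sequential form of steps~(1)--(3) already delivers, in the spirit of Theorem~\ref{thm:upper bound linear} --- to $O(\sqrt{\sum r_i})$ is a genuine Meyniel-type problem. For surfaces this is already the author's open $O(\sqrt{g})$ conjecture, and Meyniel's conjecture itself is unresolved, so no purely homological argument should be expected to yield the square root without new isoperimetric input. A second, independent obstacle is the base case: unlike for surfaces there is no known analog of the Aigner--Fromme planar $3$-cop theorem for simply-connected high-dimensional complexes (even the cop number of the round $n$-sphere is not known to be bounded independently of $n$), so the induction currently lacks a clean place to terminate. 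Controlling the guarding cost $c(Y)$ of the codimension-one walls, and securing a near-perfect Morse decomposition in the presence of torsion, are the remaining technical pressure points.
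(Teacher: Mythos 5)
The statement you are addressing is stated in the paper as a \emph{conjecture}; the paper offers no proof of it, so there is nothing to compare your argument against, and indeed your proposal does not constitute a proof. You have written a plausible research program, and to your credit you identify its two fatal gaps yourself, but they are genuinely fatal as things stand. First, the square root: the sequential cut-along-a-guarded-separator recursion, modelled on Theorem \ref{thm:upper bound linear}, can at best give a bound linear in $r_1+\cdots+r_n$ (and even that only once the other gaps are filled); upgrading to $O(\sqrt{\sum r_i})$ by ``batching'' walls is exactly the content of the author's open $O(\sqrt{g})$ conjecture already in the surface case $n=2$, and of Meyniel's conjecture in the graph case, so no amount of homological bookkeeping will produce it without a new idea. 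Second, the base case: terminating the induction requires a bounded cop number for homologically trivial $n$-dimensional complexes, and the paper's own Conjecture 1 (finiteness of $c(X)$ for every game space) is open; there is no analog of the Aigner--Fromme three-cop theorem above dimension two, so your recursion has no floor to land on.

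There is also a third gap you mention only in passing but which would need real work even for the linear bound: step (1) asserts that each independent homology class yields a separating $(n-1)$-dimensional subcomplex that is the fixed-point set of a $1$-Lipschitz retraction and whose own cop number is controlled. In the surface case the paper gets this essentially for free because an essential systole is a simple closed geodesic, hence a union of two isometric paths guardable by two cops via Lemma \ref{lem:isometric path}. In higher dimensions a ``minimal'' cycle representative need not be a nice subcomplex, need not separate, need not admit any $1$-Lipschitz retraction of the ambient space onto it, and its cop number is governed only by the very conjecture you are trying to prove one dimension down --- so the ``dimension telescope'' is circular unless the codimension-one case is established independently. In short: your outline is a reasonable heuristic for why the conjecture has the shape it does, but each of its three pillars is itself an open problem, and the paper intentionally leaves the statement as a conjecture.
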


It may be that $c_0(X)$ would fall within the same bound as conjectured above. In fact, in the examples that we understand, the factor $n$ of the conjecture seems to be necessary only for $c_0(X)$.

\bibliographystyle{plain}
\bibliography{references_CRsurfaces}

\appendix

\section{Making the boundary piecewise geodesic}

In order to use Theorem \ref{thm:triangulate surface epsilon}, we need piecewise geodesic boundary. In this appendix we prove that any more complicated boundary component can be well approximated by cutting an $\varepsilon$-neighborhood around it.

\begin{theorem}\label{thm:make boundary piecewise geodesic}
  Let $X$ be a compact geodesic surface and $\varepsilon>0$. Then $X$ contains a geodesic surface $X'\subseteq X$ that is homeomorphic to $X$ and has the following properties:
  \begin{itemize}
    \item[(i)] $X'$ is isometric in $X$.
    \item[(ii)] $X'$ has piecewise geodesic boundary.
    \item[(iii)] Every point in $X\setminus X'$ is at distance less than $\varepsilon$ from $X'$.
  \end{itemize}
\end{theorem}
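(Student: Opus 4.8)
The plan is to treat the boundary components one at a time. Since $X$ is compact, $\partial X$ is a disjoint union of finitely many topological circles, and distinct components lie at a positive distance apart (they are disjoint compact sets); shrinking $\varepsilon$ below this boundary separation, I can push each component inward independently and take the union of the resulting modifications. So fix one boundary circle $B$ and aim to replace the thin collar just inside $B$ by a piecewise geodesic curve $B'$.

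The heart of the argument is a local cut-off lemma: for every $p\in B$ there are points $w,z\in B$ lying on opposite sides of $p$ and a $(w,z)$-geodesic $\gamma$ such that the arc $B[w,z]$ together with $\gamma$ bounds a topological disk $D\ni p$ with $\diam(D)<\varepsilon$, and such that $\gamma$ is the only geodesic from $w$ to $z$ inside $D$. To produce such a chord I would let $w$ and $z$ be the first points at which $B$, traversed from $p$ in either direction, reaches distance $\rho$ from $p$, and take $\gamma$ to be a shortest geodesic joining them; then $\ell(\gamma)\le 2\rho$, so $\gamma\subseteq\overline{B}_X(p,3\rho)$ and the whole configuration sits in a ball of radius $3\rho$. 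The uniqueness of $\gamma$ in $D$ (property~(2)) I would obtain by replacing $\gamma$ with the \emph{innermost} shortest geodesic, i.e.\ the one cutting off the smallest disk. Establishing this local lemma is the \textbf{main obstacle}: the boundary $B$ need not be rectifiable and the metric carries no curvature bounds, so I cannot control $\ell(B[w,z])$ and must argue purely from local topology. Concretely, I need to know that the correct (inner) disk bounded by the simple closed curve $\gamma\cup B[w,z]$ is the small one rather than the large complementary region, which I would derive from the fact that, by compactness and the systole being bounded away from zero, the component of a small metric ball around a boundary point is a topological half-disk; a simple closed curve inside it then bounds a genuinely small disk on the side of $p$.

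With the local lemma in hand, the assembly is combinatorial. The open arcs $B[w,z]$ produced for the various $p$ cover the compact circle $B$, so finitely many chords suffice; enumerating them cyclically and discarding redundant ones yields pairs $(w_j,z_j)$, $1\le j\le s$, whose arcs overlap consecutively, giving the interlacing of property~(1). Because consecutive cut-off disks $D_j,D_{j+1}$ overlap, the geodesics $\gamma_j$ and $\gamma_{j+1}$ must cross, which is property~(4), while disjointness of non-consecutive chords follows from the diameters being $<\varepsilon$ together with the spacing of the $w_j$. Tracing the first crossing points $x'_j$ and concatenating the arcs $\gamma_j[x'_j,x'_{j+1}]$ then produces a simple closed curve $B'$ homotopic to $B$ (property~(5)); the degenerate cases (a disk collapsing to an arc, or two consecutive geodesics meeting more than once) are handled exactly as indicated in Figure~\ref{fig:defining B'}, by truncating one geodesic along the other. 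Cutting $X$ along $B'$ and discarding the collar between $B$ and $B'$ yields $X'$; since that collar is an annulus (possibly degenerate), $X'$ is homeomorphic to $X$, and property~(iii) is immediate because every discarded point lies in some $D_j$ of diameter $<\varepsilon$ whose bounding geodesic meets $B'$.

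It remains to verify that $X'$ is isometric in $X$ (property~(i)); property~(ii) is automatic since $B'$ is a finite concatenation of $X$-geodesics. For this I would build a $1$-Lipschitz retraction $\sigma\colon X\to X'$ that is the identity on $X'$: on each $D_j$ set $\sigma(x)=\gamma_j(t)$ where $d(w_j,x)=d(w_j,\gamma_j(t))$ (capped at the endpoints), and then fold the overflow of $\gamma_j$ onto the endpoints $x'_j,x'_{j+1}$. Each piece is $1$-Lipschitz because $\gamma_j$ is a geodesic and $D_j$ lies on its near side. Given any $p,q\in X'$ and a shortest $X$-path $\alpha$ between them, the image $\sigma\circ\alpha$ is a path in $X'$ of length at most $\ell(\alpha)=d_X(p,q)$, so $d_{X'}(p,q)\le d_X(p,q)$ while the reverse inequality is trivial; hence $X'$ is isometric in $X$ and is itself a geodesic space. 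The only delicate point in this last step is continuity of $\sigma$ across the seams where consecutive $D_j$ meet, and this is the same continuity computation that appears in the proof of Theorem~\ref{thm:obtaining geodesic boundary}.
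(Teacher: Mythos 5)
Your overall strategy coincides with the paper's: cut off a thin collar near each boundary component by short geodesic chords whose union, truncated at the first intersection points of consecutive chords, forms a piecewise geodesic curve $B'$ homotopic to $B$; discard the collar; and certify isometry of $X'$ via a folded $1$-Lipschitz retraction. Your localized cut-off lemma (choosing $w,z$ per point $p$ and taking a finite subcover of the arcs afterwards) is a legitimate reorganization of the paper's argument, which instead first produces a single radius $r$ valid for all close pairs on $B$ by a compactness/contradiction (``pinch point'') argument and only then builds the cover. Either route can be made to work, though your assertion that ``the component of a small metric ball around a boundary point is a topological half-disk'' is stronger than needed and not literally true for wild intrinsic metrics; what is true and sufficient is that a small ball is contained in a coordinate half-disk neighborhood, so the closed curve $\gamma\cup B[w,z]$ bounds, on the side not containing the rest of $B$, a region inside that neighborhood.

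The genuine gap is your treatment of property (4). You claim that disjointness of non-consecutive chords ``follows from the diameters being $<\varepsilon$ together with the spacing of the $w_j$.'' But the $w_j$ are spaced only in the cyclic order along $B$, not metrically: two arcs of $B$ far apart along $B$ can be arbitrarily close in $X$ (a long thin finger of the surface both of whose sides lie on $B$), and then $\gamma_i$ and $\gamma_j$ for non-consecutive $i,j$ can cross even though every $D_j$ has diameter $<\varepsilon$. If that happens, $\bigcup_{j}\gamma_j[x'_j,x'_{j+1}]$ need not be a simple closed curve, the discarded region need not be an annulus, and the homeomorphism between $X'$ and $X$ fails. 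The paper repairs this with an explicit surgery: if $\gamma_i$ and $\gamma_j$ first cross at $x$ with $w_i<z_i<w_j<z_j$, then since $\ell(\gamma_i),\ell(\gamma_j)<r$ one of the combinations of subarcs on either side of $x$ has length $<r$, and this is used either to merge the two constituents into a single one with endpoints $w_i,z_j$ (decreasing $s$) or to retarget one of them (e.g.\ replacing $(w_i,z_i)$ by $(w_i,w_j)$) and delete the constituents covering the intervening arc $B[z_i,w_j]$. Some reduction step of this kind is indispensable; without it your construction of $B'$ does not go through. The remaining ingredients of your proposal --- the crossing of consecutive chords, the innermost-geodesic choice for uniqueness, the folded retraction $\sigma$ and the resulting proof of (i), and the diameter bound giving (iii) --- match the paper and are sound.
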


\begin{proof}
  We may assume that the length of every noncontractible closed curve in $X$ and the minimum distance between two boundary components is at least $3\varepsilon$.
  This will enable us to treat each boundary component separately as all points in $X\setminus X'$ will be at distance at most $\varepsilon$ from the boundary.

  Let $B$ be a boundary component of $X$. We may assume that any closed curve $\gamma$ homotopic to $B$ with each of its points at distance less than $\varepsilon$ from $B$ has length $\ell(\gamma) > 4\varepsilon$.
We claim that there is a positive constant $r$, $0<r<\varepsilon/4$ such that for any $x,y\in B$ with $d(x,y)<r$ the subset of $X$ bounded by the $(x,y)$-segment of $B$ and any $(x,y)$-geodesic $\alpha$ has diameter at most $\varepsilon$. If not, we consider a sequence of pairs $(x_n,y_n)$ with $d(x_n,y_n)<1/n$ contradicting the stated property. By compactness, we may assume that $x_1,x_2,\dots$ converge to a point $x\in B$. Then also $y_1,y_2,\dots$ converge to $x$, and there is a sequence of corresponding geodesics $\alpha_1, \alpha_2,\dots$, each of which cuts a subset with a point $z_i$ at distance more than $\varepsilon/2$ from $x$. Let $z$ be a limit point of the sequence $z_1,z_2,\dots$. Then a subsequence of $(\alpha_i)_{i\ge1}$ cuts out the limit point $z$. That subsequence of geodesics converges to the point $x$, which means that $x$ is a ``pinch point'', contradicting the fact that $X$ is a surface. This proves the claim.

  In what follows we use the following notation. If $x,y\in B$, then we denote by $B[x,y]$ the $(x,y)$-segment on $B$, taken in the preferred direction on $B$. In our figures the surface will be on the left side of $B$ when we traverse $B$ in the preferred direction. If $\alpha:[0,1]\to X$ is a simple curve and points $x=\alpha(t)$ and $y=\alpha(t')$ are on $\alpha$ with $t\le t'$, then we denote by $\alpha[x,y]$ the segment of $\alpha$ from $x$ to $y$.

  Since $B$ is homeomorphic to a circle, for each point $x\in B$, there is a positive constant $0<r_x<r/2$ such that each point on an open interval $I_x$ around $x$ on $B$ is at distance in $X$ at most $r_x$ from $x$.
  Let $x_1,x_2,\dots,x_s$ be a finite set of points in $B$ of minimum possible cardinality such that $\{I_{x_i}\mid i\in[s]\}$ is a cover of $B$. (The fact that the set is finite follows from compactness.) We may assume that the points are listed in the cyclic order as they appear on $B$. We will use the notation $x_1<x_2<\cdots <x_s<x_1$ to denote this cyclic order and, in particular, imply that no other point $x_j$ is in the segment of $B$ from $x_i$ to $x_{i+1}$ ($i\in [s]$, $j\neq i, i+1$).

  Let us now consider two consecutive points $x_i$ and $x_{i+1}$ and their intervals $I_{x_i}$ and $I_{x_{i+1}}$. By the minimality of $s$, the union of these two (closed) intervals contains the whole $(x_i,x_{i+1})$-segment on $B$, so there is a point in the intersection, and hence $d(x_i,x_{i+1})\le r_{x_i} + r_{x_{i+1}} < r$. Thus every $(x_i,x_{i+1})$-geodesic cuts off a subset $A_i$ of diameter at most $\varepsilon$. By compactness, there is an $(x_i,x_{i+1})$-geodesic $\alpha_i$, for which $A_i$ is (inclusion-wise) minimal.
  Let $w_i$ ($z_i$) be the ``leftmost'' (``rightmost'') point of $\alpha_i[x_i,x_{i+1}]\cap B$, so that the order on $B$ is $w_i\le x_i < x_{i+1}\le z_i$ and $\alpha_i$ has no points in $B\setminus B[w_i,z_i]$. We also take the $(w_i,z_i)$-geodesic $\gamma_i=\alpha_i[w_i,z_i]$. The geodesic $\gamma_i$ together with the $(w_i,z_i)$-segment on $B$ bounds a \emph{degenerate disk}, which we denote by $D_i$.

\begin{figure}
  \centering
  \includegraphics[width=13.6cm]{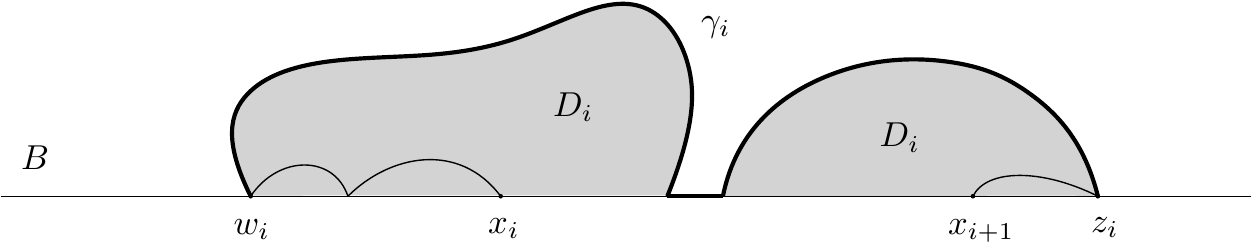}
  \caption{The figure shows the $(x_i,x_{i+1})$-geodesic $\alpha_i$ which may intersect $B$ outside of the $(x_i,x_{i+1})$-segment on $B$. Then we define the leftmost and the righmost points $w_i,z_i$, the geodesic $\gamma_i=\alpha_i[w_i,z_i]$ joining them, and the (degenerate) disk $D_i$ bounded by $\gamma_i$ and $B$. The $(w_i,z_i)$-geodesic $\gamma_i$ is shown bold; $D_i$ consists of the two shaded disks together with the joining segment on $B$.}\label{fig:define_wi_and_zi}
\end{figure}

The \emph{constituents} $w_j,z_j,\gamma_j,D_j$ for $j\in [s]$ have the following properties for every $j\in [s]$ (with all indices considered modulo $s$):

\begin{itemize}
    \item[(1)] The pairs $(w_j,z_j)$ and $(w_{j+1},z_{j+1})$ interlace on $B$, i.e.\ $w_j < w_{j+1} \le z_j < z_{j+1}$, and the union of all segments $B[w_j,z_j]$ covers $B$.
    \item[(2)] The $(w_j,z_j)$-geodesic $\gamma_j$ bounds a (possibly degenerate) disk $D_j$ together with $B[w_j,z_j]$, $\gamma_j$ is the unique geodesic from $w_j$ to $z_j$ contained in $D_j$, and $\gamma_j[w_j,z_j]\cap B \subseteq B[w_j,z_j]$.
    \item[(3)] The length of $\gamma_j$ is less than $r$, $\ell(\gamma_j)<r$, and the diameter of $D_j$ is smaller than $\varepsilon$.
\end{itemize}
We will change the constituents (and possibly decrease their number $s$) so that they will still satisfy (1)--(3), and will also satisfy the following:
\begin{itemize}
    \item[(4)] $\gamma_j$ intersects $\gamma_{j-1}$ and $\gamma_{j+1}$, but is disjoint from all other $\gamma_m$, $m\notin \{j-1,j,j+1\}$.
    \item[(5)] Let $x'_j$ be the first point on $\gamma_{j-1}$ that belongs to $\gamma_{j-1}\cap \gamma_j$, when $\gamma_{j-1}$ is traversed from $w_{j-1}$ towards $z_{j-1}$. Then the union $\cup_{j=1}^s \gamma_j[x'_j,x'_{j+1}]$ forms a simple closed curve $B'$ in $X$ that is homotopic to $B$.
\end{itemize}
Let us observe that (5) is a consequence of (1)--(4), so it remains to see how to achieve (4).

We take a set of constituents such that $s$ is smallest possible and properties (1)--(3) hold. Then it is clear that for any distinct $i,j\in[s]$, we cannot have $w_i\le w_j<z_j\le z_i$. In such a case we could remove the $j$th constituent. This implies that any two constituents either (weakly) interlace, i.e.\ $w_i<w_j\le z_i<z_j$ (in which case $B[w_i,z_i]\cap B[w_j,z_j]\not=\emptyset$), or they cover disjoint segments on $B$, $B[w_i,z_i]\cap B[w_j,z_j]=\emptyset$. Clearly, each constituent weakly interlaces with the previous one and the next one. But if it interlaces with another one, then three of them would cover the same point on $B$, and it is easy to see that we could remove one of them and still have properties (1)--(3). Thus, to show (4), it suffices to consider the possibility that $w_i<z_i < w_j<z_j$, and $\gamma_i$ and $\gamma_j$ intersect. Let $x$ be the first intersection point when we traverse $\gamma_i$ from $w_i$ towards $z_i$. Note that one of the segments $\gamma_i[w_i,x]$ and $\gamma_i[x,z_i]$ combined with one of $\gamma_j[w_j,x]$ and $\gamma_j[x,z_j]$ has length less than $r$ (since both $\gamma_i$ and $\gamma_j$ have length less than $r$). That combined curve thus bounds the disk together with the corresponding segment on $B$, an we may assume that the segment on $B$ contains $B[z_i,w_j]$.

If $\ell(\gamma_i[w_i,x]\cup \gamma_j[x,z_j])<r$, then we can replace the $i$th and $j$th constituents with one constituent using the points $w_i$ and $z_j$, thus decreasing $s$. Otherwise, if $\ell(\gamma_i[w_i,x]) + \ell(\gamma_j[w_j,x])<r$. In this case we replace $(w_i,z_i)$ with the pair $(w_i,w_j)$ (and make the corresponding constituent). Now we can eliminate the constituents covering $B[z_i,w_j]$. The remaining case when $\ell(\gamma_i[x,z_i]) + \ell(\gamma_j[x,z_j])<r$ is similar. This shows that (4) will hold.

Finally, the proof of (5) is easy (using the fact that any curve surrounding $B$ in its $\varepsilon$-neighborhood has length more than $4r$.

Now, we remove all points that are contained in the ``degenerate cylinder'' between $B$ and $B'$ and thus replace the boundary component $B$ with the piecewise geodesic boundary $B'$. Since all removed points are contained in disks using geodesic curves of length less than $r$, these disks have diameter at most $\varepsilon$. Also, each such disk has a point in $B'$. Therefore, each point in $X\setminus X'$ is at distance at most $\varepsilon$ from $X'$. This completes the proof.
\end{proof}

\end{document}